\long\def\hide#1{}
\newcommand{\dif}{\,\mathrm{d}}
\newcommand{\inn}{{\quad\hbox{in } }}
\newcommand{\C}{{\mathbb C}}
\newcommand{\R} {\mathbb R}
\newcommand{\cuad}{{\sqcap\kern-.68em\sqcup}}
\renewcommand{\a}{{\alpha}}
\newcommand{\be}{\begin{equation}}
\newcommand{\ee}{\end{equation}}
\newtheorem{definition}{Definition}
\newtheorem{lemma}{Lemma}[section]
\newtheorem{proposition}{Proposition}[section]
\newtheorem{theorem}{Theorem}
\newtheorem{remark}{Remark}[section]
\newcommand{\bremark}{\begin{remark} \em}
\newcommand{\eremark}{\end{remark} }
\numberwithin{equation}{section}
\numberwithin{equation}{section}
\def \d{\delta}
\def\R{\mathbb{R}}
\def\C{\mathcal{C}}
\def \p{\partial}
\def \e {\varepsilon}
\def \a {\alpha}
\def \RE {\mathop{\text{Re}}}
\def \IM {\mathop{\text{Im}}}
\def \O {\Omega}
\def \L {\mathcal{L}}
\def \d {\tilde{d}}
\def \r {\rho}
\def \v {\theta}
\def \d {\hat{d}}
\def \z {\tilde{z}}
\def\@seccntformat#1{\@ifundefined{#1@cntformat}%
   {\csname the#1\endcsname\quad}
   {\csname #1@cntformat\endcsname}
}
\author{ Juan D\'avila, Manuel del Pino, Maria Medina and R\'emy Rodiac}
\title[ Interacting helical traveling waves ]{ Interacting helical traveling waves for the Gross-Pitaevskii equation}
\date{\today}
\address[Juan D\'avila]{Department of Mathematical Sciences University of Bath, Bath BA2 7AY, United Kingdom}
\email{jddb22@bath.ac.uk}
\address[Manuel del Pino]{Department of Mathematical Sciences University of Bath, Bat
h BA2 7AY, United
Kingdom, Departamento de Ingenier\'ia Matem\'atica-CMM Universidad de Chile, Santiago 837-
0456, Chile, Chile}
\email{m.delpino@bath.ac.uk}
\address[Mar\'ia Medina]{Departamento de Matem\'aticas,
Universidad Aut\'onoma de Madrid,
Ciudad Universitaria de Cantoblanco,
28049 Madrid, Spain}
\email{maria.medina@uam.es}
\address[R\'emy Rodiac]{Universit\'e Paris-Saclay, CNRS,  Laboratoire de math\'ematiques d'Orsay, 91405, Orsay, France}
\email{remy.rodiac@universite-paris-saclay.fr}
\begin{document}

\begin{abstract}
We consider the 3D Gross-Pitaevskii equation
\begin{equation}\nonumber
i\p_t \psi +\Delta \psi+(1-|\psi|^2)\psi=0 \text{ for } \psi:\R\times \R^3 \rightarrow \mathbb{C}
\end{equation}
and construct traveling waves solutions to this equation. These are solutions of the form \(\psi(t,x)=u(x_1,x_2,x_3-Ct)\) with a velocity \(C\) of order \(\e|\log\e|\) for a small parameter \(\e>0\). We build two different types of solutions. For the first type, the functions \(u\) have a zero-set (vortex set) close to an union of \(n\) helices for \(n\geq 2\) and near these helices \(u\) has degree \(1\). For the second type, the functions \(u\) have a vortex filament of degree \(-1\) near the vertical axis \(e_3\) and \(n\geq 4\) vortex filaments of degree \(+1\) near helices whose axis is \(e_3\). In both cases the helices are at a distance of order \(1/(\e\sqrt{|\log \e|)}\) from the axis and are solutions to the Klein-Majda-Damodaran system, supposed to describe the evolution of nearly parallel vortex filaments in ideal fluids. Analogous solutions have been constructed recently by the authors for the stationary Gross-Pitaevskii equation, namely the Ginzburg-Landau equation. To prove the existence of these solutions we use the Lyapunov-Schmidt method and a subtle separation between even and odd Fourier modes of the error of a suitable approximation.
\end{abstract}

\maketitle
\section{Introduction}

The aim of this paper is to construct solutions to the Gross-Pitaevskii equation
\begin{equation}\label{GP}
i\p_t\psi +\Delta \psi+(1-|\psi|^2)\psi=0 \text{ in } \R \times \R^3,
\end{equation}
for \(\psi: \R \times \R^3 \rightarrow \mathbb{C}\). This equation appears in Bose-Einstein condensates theory, nonlinear optics and superfluidity. At least formally, it possesses two important conserved quantities: the energy 
\begin{equation*}
E(\psi)=\frac12 \int_{\R^3}\left[ |\nabla \psi(\cdot,t)|^2+\frac12 (1-|\psi(\cdot,t)|^2)^2\right] dx,
\end{equation*}
and the momentum
\begin{equation*}
P(\psi)=\int_{\R^3} (i\psi,\nabla \psi) dx,
\end{equation*}
where \((\cdot,\cdot)\) denotes the scalar product in \(\R^2 \simeq \mathbb{C}\).
In this paper we are interested in special solutions called traveling waves solutions. They take the form
\begin{equation}\label{TV}
\psi(t,x)=u(x_1,x_2,x_3-Ct),
\end{equation}
where \(u:\R^3\rightarrow \R\), \(C\in \R\) is a constant to be determined and \(x=(x_1,x_2,x_3)\in \R^3\). If \(\psi\) is defined by \eqref{TV} and solves \eqref{GP} then \(u\) satisfies
\begin{equation}\label{GP1}
iC\p_{x_3} u=\Delta u+(1-|u|^2)u \text{ in } \R^3.
\end{equation}

Traveling waves solutions to \eqref{GP} of finite energy are thought to play an important role in the long time behaviour of solutions, see e.g.\ \cite{Jones_Roberts_1982,Jones_Putterman_Roberts_1986}.
The equation \eqref{GP} is well-posed in various spaces, \cite{Zhidkov_2001,Bethuel_Saut_1999,Gallo_2004,Goubet_2007,Gallo_2008}  and in particular we remark that solutions to \eqref{GP} exist for all time for initial data in the energy space \cite{Gerard_2006,Gerard_2008}. In this article we will construct infinite energy solutions. To find solutions to \eqref{GP1} it is convenient to introduce a small parameter \(\e>0\) and use the scaling \(u_\e(x)=u\left( \frac{x}{\e}\right)\). We are interested in solutions with small velocity, namely we expect the velocity to be of order \(C=C_\e \approx \e|\log \e|\) and thus we set
\begin{equation}\label{eq:speed}
C=C_\e:=c \e|\log\e|
\end{equation}
for a fixed \(c \in \R\). Hence \(u_\e\) is a solution to
\begin{equation}\label{GPeps}
ic \e^2|\log \e| \p_{x_3} u_\e=\e^2 \Delta u_\e+(1-|u_\e|^2)u_\e \text{ in } \R^3.
\end{equation}

The motivation for constructing our solutions originates in the study of the following scaled Gross-Pitaevskii equation
\begin{equation}\label{GPscaled}
i\e^2|\log \e|\p_t \psi+\e^2\Delta \psi+(1-|\psi|^2)\psi=0 \text{ in } \R\times \O,
\end{equation}
where \(\O\) is an open subset of \(\R^3\). Roughly speaking, for initial data whose Jacobian concentrate near some 1D-curve as \(\e\rightarrow 0\), the solution \(\psi(t,\cdot)\) will also concentrate near some 1D-curve that will evolve through the \textit{binormal curvature flow}, see e.g.\ \cite{Jerrard_2002,Jerrard_Smets_2018}. For smooth curves parametrized by arclength \(\gamma(t,s)\), the evolution through binormal curvature flow can be written as
\begin{equation*}
\p_t\gamma=\p_s \gamma \wedge \p_{ss}^2 \gamma.
\end{equation*}
For less regular curves, one can also interpret this flow in a weak sense (see \cite{Jerrard_Smets_2015}). Special solutions to the binormal curvature flow are: a straight line not depending on time, a translating circle and a translating-rotating helix. In each of these examples there exists an associated family of solutions to \eqref{GP}. For the stationary straight line, the associated solution is the standard Ginzburg-Landau vortex of degree \(1\) in the plane, i.e., the solution to
\begin{equation}\label{GL2}
\Delta w+(1-|w |^2)w=0 \text{ in } \R^2,
\end{equation}
which can be written as \(w(z)=\rho(r)e^{i\theta}\) for some non-negative real function \(\rho\)  with \(\rho(0)=0\) and \(\rho(+\infty)=1\). This can be viewed as a stationary solution to \eqref{GP} in \(\R^3\) which is independent of the variable \(x_3\).  We refer to \cite{HerveHerve1994,ChenElliottQi1994,Brezis_Merle_Riviere_1994} for more information on $w$ and to \cite{Mironescu1996,Shafrir1994,Sandier1998}  for its uniqueness properties. Solutions to \eqref{GP} associated to a translating circle are traveling waves solutions with small speed \eqref{eq:speed} exhibiting a vortex ring. They are finite energy solutions and were constructed by variational methods in \cite{Bethuel_Orlandi_Smets_2004,Chiron_2004}. Later on, traveling waves solutions to a similar equation with a vortex ring, the Schr\"odinger map equation, were constructed by a perturbation method in \cite{LinWei2010}, see also \cite{LinWeiJun2013,WeiJun2012}.  We refer to \cite{Bethuel_Gravejat_Saut_2008,Bethuel_Gravejat_Saut_2009,Bethuel_Gravejat_Saut_2009b, Maris_2013,Chiron_Maris_2017,Liu_Wei_2020,Bellazzini_Ruiz_2020,Chiron_Pacherie_2020_a,Chiron_Pacherie_2020_b} for more on finite energy solutions.
Associated to the helix, there exist infinite energy solutions to \eqref{GP}. They are also traveling waves solutions with small speed and were constructed by Chiron in \cite{Chiron_2005}, by using variational methods. The corresponding traveling single-helix solutions to the Schr\"odinger map equation were proved to exist by Wei-Yang in \cite{WeiJun2016}, where the authors raise the open problem of the existence of solutions with a vortex-set of multiple helices. One of the purposes of this article is to answer this question for the Gross-Pitaevskii equation.

Once we know that the straight filament is a solution to \eqref{GP}, one can also look for solutions to the GP-equation whose vortex set consists in multiple, almost straight, parallel filaments. In this case, it is believed that the motion of these \(n \geq 2\) filaments is governed by the Klein-Majda-Damodaran system:
\begin{equation}\label{eq:KMD}
-i\p_t f_k(t,z)-\p_{zz} f_k(t,z) -2\sum_{j\neq k} d_jd_k \frac{f_k-f_j}{|f_k-f_j|^2}=0, \quad k=1,\dots,n,
\end{equation} 
where \(z\) is the third coordinate in \(\R^3\) and \(d_i\), \(i=1,\dots,n\), represent the topological degree around the filament.
This system was derived in \cite{Klein_Majda_Damodaran1995} in the context of fluid mechanics and studied in \cite{Kenig_Ponce_Vega2003}. The Euler equation and the Gross-Pitaevskii equation are thought to share many common properties, in particular with respect to the behavior of their vortex filaments. Recently, Jerrard-Smets in \cite{Jerrard_Smets_2020} provided the first rigorous justification of the appearance of the Klein-Majda-Damodaran system as a limiting problem for vortex filaments of the Gross-Pitaevskii solutions. More precisely, they proved that for well-prepared initial data, the vortex set of solutions to \eqref{GP} converges, as \(\e\rightarrow 0\), towards \(n\) almost parallel filaments solutions to the Klein-Majda-Damodaran system. In this work only degree \(d_i=+1\) were considered.
This follows an earlier work on the interaction of vortex filaments for the Ginzburg-Landau equation by Contreras-Jerrard \cite{ContrerasJerrard2017}. 

The result in 
\cite{Jerrard_Smets_2020} is based on  variational arguments, and therefore only finite energy solutions are considered in cylindrical domains of the form $\omega\times \R$ where $\omega \subset \R^2$ is bounded, with periodicity in the third variable. The finite energy condition is not a natural hypothesis for nearly parallel vortex filaments in the entire space, since the standard Ginzburg-Landau vortex of degree 1 has infinite energy in $\R^2$. 

\medskip
In this paper we consider an important family of explicit solutions of system \eqref{eq:KMD} given by rotating and translating helices of degree one.
More precisely, for $n\geq 2$ we consider the solution to \eqref{eq:KMD} given by 
\begin{equation}
\label{fk}
f_k(t,z):=\hat{d}e^{i(z-\nu t)}e^{\frac{2i(k-1)\pi}{n}}, \quad k=1,\dots,n,\; \text{ for } \hat{d}:=\sqrt{\frac{n-1}{1-\nu}},\mbox{ where } \nu<1.
\end{equation}
The curves in $\R^3$ described by $z \mapsto (f_k(t,z),z)$ are helices arranged with polygonal symmetry. 

\medskip 
Our goal in this paper is to construct a family of solutions to \eqref{GPeps}  whose vortex-set is close, as \(\e\) tends to zero, to the helices \eqref{fk}. 
The solutions we construct look like a product of standard vortices of degree one \(w\), i.e., the solution to \eqref{GL2}, centered at \(f_k(t,z)\), in the planes perpendicular to the vertical axis \(e_3\).
The solutions we construct are periodic in $t$ and $z$, just as the helices \eqref{fk}. In addition we obtain a refined asymptotic description of the solution, not available in \cite{Jerrard_Smets_2020}.

We denote by $(r,\theta,x_3)$ the usual cylindrical coordinates.

\begin{theorem}\label{th:main1}
For each \(n\geq 2\) and for every \(-\infty<c<1\), there exists \(\e_0>0\) such that for every \(0<\e<\e_0\)  there exists \(u_\e\) which solves \eqref{GP1} with \(C=c\e|\log \e|\). The solution \(u_\e\) can be written as
\begin{equation*}\begin{split}
u_\e(r,\theta,x_3)& =\prod_{k=1}^{n} w\left(re^{i\theta}-d_\e e^{i\e x_3} e^{2ik\pi/n} \right)+\varphi_\e \\
\end{split}
\end{equation*}
with \begin{equation*}
\|\varphi_\e\|_{L^\infty}\leq \frac{M}{|\log \e|} \text{ for some constant } M>0,
\end{equation*}
and \(d_\e:= \frac{\d_\e}{\e\sqrt{|\log\e|}}\) with \(\hat{d}_\e =\sqrt{\frac{n-1}{1-c}} +o_\e(1)\).
\end{theorem}

\begin{remark}
The corresponding solutions to \eqref{GP} given by Theorem \ref{th:main1} are
\begin{equation}\nonumber
\psi_\e(t,x)=\prod_{k=1}^n w\left( re^{i\theta} -d_\e e^{i\e(x_3-c\e|\log \e|t)}e^{2ik\pi/n} \right)+\varphi_\e(x_1,x_2,x_3-c\e|\log\e|t).
\end{equation}
Furthermore, thanks to the symmetries of equation \eqref{GP1} we can see that for all constant \( +\infty >c> -1\) there exists a solution \(\tilde{u}_\e\) to \eqref{GP1} with \(C=-c\e |\log \e| \) if \(\e\) is small enough. This solution can be written as 
\begin{equation*}\begin{split}
\tilde{u}_\e(r,\theta,x_3)& =\prod_{k=1}^{n} w\left(re^{i\theta}-d_\e e^{-i\e x_3} e^{2ik\pi/n} \right)+\tilde{\varphi}_\e 
\end{split}
\end{equation*}
with \begin{equation*}
\|\tilde{\varphi}_\e\|_{L^\infty}\leq \frac{M}{|\log \e|} \text{ for some constant } M>0,
\end{equation*}
and \(d_\e:= \frac{\d_\e}{\e\sqrt{|\log\e|}}\) with \(\hat{d}_\e =\sqrt{\frac{n-1}{1+c}} +o_\e(1)\).
\end{remark}

Our result extends the pioneering work of Chiron \cite{Chiron_2005} to the case of 2 or more interacting helical filaments.
In \cite{Chiron_2005} a solution with a single helicoildal vortex filament was built by a subtle constrained minimization procedure.

\medskip

We can also consider the solution to \eqref{eq:KMD} which consists in $n+1$, with $n\geq 3$, helices of degree one rotating around a straight filament of degree \(-1\):
\begin{equation*}
d_0=-1, \quad d_k=+1, \quad k=1,\dots, n+1,
\end{equation*}
\begin{equation*}
f_0(t,z)=z, \quad  f_k(t,z)=\hat{d}e^{i(z-\nu t)} e^{\frac{2i(k-1)\pi}{n+1}}, \quad k=1,\dots,n+1,\; \text{ for } \hat{d}:=\sqrt{\frac{n-2}{1-\nu}},\mbox{ where }\nu<1.
\end{equation*}

\begin{theorem}\label{th:main2}
For each \(n\geq 3\) and for \(-\infty<c<1\), there exists \(\e_0>0\) such that for every \(0<\e<\e_0\)  there exists \(u_\e\) which solves \eqref{GP1} with \(C=c\e|\log \e|\). The solution \(u_\e\) can be written as
\begin{equation}
\begin{split}
u_\e(r,\theta,x_3)&=\overline{w}(re^{i\theta})\prod_{k=1}^{n+1} w\left(re^{i\theta}-d_\e e^{i\e x_3}e^{2i(k-1)\pi/(n+1)} \right)+\varphi_\e \\
\end{split}
\end{equation}
with \begin{equation}
\|\varphi_\e\|_{L^\infty}\leq \frac{M}{|\log \e|} \text{ for some constant } M>0,
\end{equation}
and \(d_\e:= \frac{\d_\e}{\e\sqrt{|\log\e|}}\) with \(\hat{d}_\e= \sqrt{\frac{n-2}{1-c}} +o_\e(1)\).
\end{theorem}

Linking with fluid mechanics, we point out that helical solutions to the Euler equations have been built recently in \cite{Davila_delPino_Musso_Wei_2020}. The solutions constructed in Theorem \ref{th:main1} and Theorem \ref{th:main2} are counterpart of solutions with helical interacting vortex filaments constructed in \cite{DdPMR} for the Ginzburg-Landau equation. Indeed, our strategy is to look for a solution of \eqref{GPeps} which, at main order, resembles
\begin{equation}\label{aprox1}
u_d(r,\theta,x_3)=\prod_{k=1}^{n} w\left(\frac{r}{\varepsilon}e^{i\theta}-d_\e e^{i x_3} e^{2ik\pi/n} \right)
\end{equation}
for Theorem \ref{th:main1} and
\begin{equation}\label{aprox2}
u_d(r,\theta,x_3)=\overline{w}\left(\frac{r}{\varepsilon}e^{i\theta}\right)\prod_{k=1}^{n+1} w\left(\frac{r}{\varepsilon}e^{i\theta}-d_\e e^{i x_3}e^{2i(k-1)\pi/(n+1)} \right)
\end{equation}
for Theorem \ref{th:main2}. Although these approximations do not fully possess the helical symmetry, we can show that \(e^{-inx_3}u_d(r,\theta,x_3)\) are screw symmetric. Since the Gross-Pitaevskii equation is invariant by screw symmetry, we can take advantage of this fact to reduce the problem to a 2D problem. 

In order to construct our solutions via a perturbative approach and a Lyapunov-Schmidt argument the strategy is the following: we first compute the error of our approximation, then we develop a linear theory for a suitable projected problem and  we use a fixed point argument. Finally we adjust the parameter \(d_\e\) to find an actual solution to \eqref{GP}. Here a major difficulty appears: the error contains terms of order \(O\left(|\log \e|^{-1}\right)\) which are orthogonal to the kernel of the linearized operator. Hence the vortex-location adjustment, which arise by multiplying the equation by the kernel of the linearized operator and integrating by parts, takes place at order \(O(\e\sqrt{|\log \e|})\). This is much smaller than the size of the non-linear terms which, in concordance with the size of the error, is of order \(O\left(|\log \e|^{-2}\right)\). To be able to conclude we need to use a careful decomposition of the perturbation in ``even'' and ``odd'' Fourier modes and to show that this decomposition is respected by the non-linearity of the equation. The even part of the decomposition will be of order \(O\left(|\log \e|^{-1}\right)\) whereas the odd part will be of order \(O(\e\sqrt{|\log \e|})\). By symmetry, the even part and the nonlinearity applied to this even part are orthogonal to the kernel and thus do not play any role in the reduction argument. The same difficulty arises in \cite{DdPMR}, where, for pedagogical purposes, only the case of two vortices was considered. In the present article we treat in detail the general case of \(n\) vortices.  A novelty of this work compared to \cite{DdPMR} is that the travelling wave effect makes the problem more delicate since the remote regime changes substantially. The derivation of the reduced equations is more subtle for the same reason.  The analogy here discovered may be regarded as a 3-dimensional parallel to that between stationary Ginzburg-Landau vortices and  Gross-Pitaevskii 
``vortex pair'' \cite{Bethuel_Saut_1999,Chiron_Maris_2017,Wei_Liu_2020}  where substantial technical work is needed to handle the travelling wave effect. More precisely, we have to deal with the new term in the Gross-Pitaevski equation \(iC\p_{x_3}u\). We check that the new error created by this term when applied to the ansatz is small enough, has sufficient decay and the same is true for its even and odd parts. We also prove that we can obtain good linear estimates for the linearized Gross-Pitaevskii operator.

The paper is organized as follows, in section \ref{IV} we explain the use of the screw-symmetric invariance of the equation \eqref{GP1} and the approximation to reduce the problem to a 2D problem. Then we look for a solution to \eqref{GP1} under an additive-multiplicative perturbation of our approximation. This is nowadays usual in equations with complex valued unknowns presenting a vortex structure (this special form of the perturbation was first devised in \cite{delPinoKowalczykMusso2006}). In section \ref{V} we compute the error of the approximation and estimate its size and decay properties. We also consider the size of the ``odd'' and ``even'' Fourier modes separately. Section \ref{VI} is devoted to the analysis of the linearized projected problem and the non-linear projected problem. Here we use elliptic estimates and the Fredholm alternative for the linearized problem and the Banach fixed point theorem for the non-linear problem.  In section \ref{VII} we study the reduced problem, i.e.,\ we justify that we can cancel the Lyapunov-Schmidt coefficients arising in the previous section. The reduced problem is solved by a continuity argument.

\section{Formulation of the problem}\label{IV}
\subsection{Reduction to a two dimensional problem.} As a first step to prove our theorems we will reduce the problem to a two-dimensional one by using a screw or helicoidal symmetry. For convenience, we use cylindrical coordinates, i.e., $(r,\theta, x_3)\in \R^+\times\R\times\R$ and we consider \(2\pi\)-periodic functions in \(\theta\).

\begin{definition}\label{screwsim}
We say that a function $u$ is screw-symmetric if
\begin{equation*}
u(r,\theta+h,x_3+h)=u(r,\theta,x_3)
\end{equation*}
for any $h \in\R$. Equivalently
$$u(r,\theta,x_3)=u(r,\theta-x_3,0)=:U(r,\theta-x_3).$$
\end{definition}
Writing the standard vortex of degree one in polar coordinates, i.e., $w(re^{i\theta})= \rho(r)e^{i\theta}$, we can see that the approximations $u_d$ defined in \eqref{aprox1} and \eqref{aprox2} satisfy
\[u_d(r,\theta,x_3)=e^{inx_3}u_d(r,\theta-x_3,0). \]
That is, $u_d$ is not screw-symmetric but $\tilde{u}_d(r,\theta,x_3):=e^{-inx_3}u_d(r,\theta,x_3)$ is, what suggests to look for solutions $u$ of \eqref{GPeps} in the form
\[u(r,\theta,x_3)=e^{inx_3}U(r,\theta-x_3),\]
being $U:\R^+\times \R$ a $2\pi$-periodic function in the second variable.  Denoting $U=U(r,s)$, this corresponds to ask $U$ to be a solution of
\begin{equation*}
\e^2\left(\p^2_{rr}U+\frac{1}{r}\p_r U+\frac{1}{r^2}\p_{s s}^2 U+\p^2_{ss} U-2in\p_sU-n^2U\right) -ic|\log \e|\e^2(inU-\p_sU)+(1-|U|^2)U 
=0,
\end{equation*}
or, in rescaled coordinates, to find a solution $V(r,s):=U(\e r,s)$ to the equation
\begin{multline}\label{eq:GP2Drescaled}
\p^2_{rr}V+\frac{1}{r}\p_r V+\frac{1}{r^2}\p_{s s}^2 V+\e^2(\p^2_{ss} V-2in\p_sV-n^2V)-ic|\log \e|\e^2(inV-\p_sV)+(1-|V|^2)V 
=0
\end{multline}
in $\R^+\times \R$.

From now on we will work in the plane $\R^2$, and we will use the notation $z=x_1+ix_2=re^{is}$. We denote by $\Delta$ the Laplace operator in $2$-dimensions, meaning
\begin{equation*}
\Delta=\p^2_{x_1x_1}+\p^2_{x_2x_2}=\p^2_{rr}+\frac{1}{r}\p_r+\frac{1}{r^2}\p^2_{ss}.
\end{equation*}
\noindent and then equation \eqref{eq:GP2Drescaled} can be written as
\begin{align}
\label{eq:GP2Drescaled-2}
\Delta V
+\e^2(\p^2_{ss} V-2in\p_sV-n^2V)-ic|\log \e|\e^2(inV-\p_sV)+(1-|V|^2)V=0 \text{ in } \R^2.
\end{align}
In the new coordinates we will write the approximation in general form as
\begin{equation}\label{eq:approx}
V_d(z)=\prod_{j=1}^{n^+} w(z-\xi^+_j) \prod_{k=1}^{n^-} \overline{w}(z-\xi^-_k), 
\end{equation}
with \(n=:n^+-n^-\). For Theorem \ref{th:main1} we will take \(n^+=n\), \(n^-=0\) and \(\xi^+_j=d_\e e^{2i\pi (j-1)/n}\) whereas for Theorem \ref{th:main2} we will take \(n^+=n+1\), \(n^-=1\), \(\xi^+_j=d_\e e^{2i\pi (j-1)/(n+1)}\)  and \(\xi^-_1=0\). Here
\begin{equation}\label{eq:relationd}
d_\e:=\frac{\hat{d}_\e}{\e \sqrt{|\log \e|}},
\end{equation}
for some new parameter $\hat d_\e = O(1)$.

\subsection{Additive-multiplicative perturbation}
Let us define the solution operator
\begin{equation}\label{SV}
S(v) := 
\Delta v
+\e^2(\p^2_{ss}v-2ni\p_sv-n^2v)-ic|\log \e|\e^2(inv-\p_sv)+(1-|v|^2)v,
\end{equation}
so that the equation to be solved can be written as 
\begin{align}
\label{mainEq}
S(v)=0.
\end{align}
Recall the notation $z = r e^{is} = x_1 + i x_2$ and $\Delta = \partial^2_{x_1x_1} + \partial^2_{x_2x_2} $.
Notice that when using the coordinates $(x_1,x_2)$ equation \eqref{mainEq} is posed in $\R^2$, while if we use polar coordinates $(r,s)$ the domain for \eqref{mainEq} is $r>0$, $s\in \R$ with periodicity.

Following  del Pino-Kowalczyk-Musso  \cite{delPinoKowalczykMusso2006},
we look for a solution to \eqref{mainEq} of the form
\begin{align}
\label{eq1}
v = \eta V_d (1+i\psi)  + (1-\eta) V_d e^{i\psi} ,
\end{align}
where $V_d$ is the ansatz \eqref{eq:approx} and $\psi$ is the new unknown.
The cut-off function $\eta$ in \eqref{eq1} is defined as
\begin{align}\label{def:cut_off_eta}
\eta(z) = \sum_{j=1}^{n^+} \eta_1(|z-\xi_j^+|)+\sum_{k=1}^{n^-}\eta_1(|z-\xi_k^-|), \quad z\in \mathbb C=\R^ 2,
\end{align}
and  $\eta_1:\R \rightarrow [0,1]$ is a smooth cut-off function such that
\begin{align}
\label{eta1}
 \eta_1(t)=1 \text{ for } t\leq 1\text{ and }\eta_1(t)=0 \text{ for } t\geq 2.
\end{align}
 The reason for the form of the perturbation term in \eqref{eq1} is the same as in \cite{delPinoKowalczykMusso2006}.
On one hand, the nonlinear terms behave better for the norms that we consider when using the multiplicative ansatz, but near the vortices, an additive ansatz is better since it allows the position of the vortex to be adjusted.

We would like to rewrite \eqref{mainEq} into an equation on $\psi$ of the form 
\[
\mathcal L^\varepsilon(\psi) =-E+\mathcal{N}(\psi)
\]
where $\mathcal L^\varepsilon$ is a linear operator, $E$ is the error of the approximation and $\mathcal N(\psi)$ groups the nonlinear terms. However, we expect \(\phi:=iV_d\psi\) to be a smooth function which does not necessarily vanish near the vortices. Hence \(\psi=-i\phi/V_d\) is not a distribution in general (although it is a function in \(C^\infty(\R^2\setminus \{ \xi_j^+,\xi_k^-\})\) it is not a \(L^1_{\text{loc}}(\R^2)\) function). Thus the global problem we want to solve will take a separate form near the vortices and far away from them.  Given two real numbers $a,b$, with $a<b$, we define the set
$$B_a^b:=\big\{\bigcup_{j=1,\ldots, n^+}\{z\in \mathbb{C}:\, a\leq |z-\xi_j^+|\leq b|\}\big\}\cup\big\{\bigcup_{k=1,\ldots, n^-}\{z\in \mathbb{C}:\, a\leq |z-\xi_k^-|\leq b|\}\big\},$$
and $B^b:=B_0^b$.

\begin{lemma}\label{lem:formulation}
Let \(\phi \in \C^\infty(\R^2)\). There exists a small constant $\rho_0>0$ such that, if $\|\phi\|_{L^\infty(\R^2)}<\rho_0$, the function \(v=\eta (V_d+\phi)+(1-\eta)V_de^{\frac{\phi}{V_d}}\) is a solution of \(S(v)=0\), where \(S\) is defined by \eqref{SV} if and only if \(\phi\) satisfies
\begin{equation}
\eta L_0(\phi) +(1-\eta)iV_d L'(\psi)=-E +N(\phi),
\end{equation}
where \(\psi=\frac{\phi}{iV_d}\) and
\begin{align}
L_0(\phi)&:= \Delta \phi+\e^2(\p^2_{ss}\phi-2ni\p_s\phi-n^2\phi)-ic|\log \e|\e^2(in\phi-\p_s\phi) +(1-|V_d|^2)\phi-2\RE(\overline{V_d}\phi)V_d,  \label{L0}\\
L'(\psi)&:=  \Delta \psi +2\frac{\nabla V_d}{V_d}\nabla \psi -2i|V_d|^2\IM(\psi)+\varepsilon^2 \Bigl( \partial_{ss}^2 \psi
+\frac{2\partial_s V_d }{V_d}\partial_s \psi 
- 2in \partial_s \psi  \Bigr) 
+ic|\log \e|\e^2\p_s\psi, \label{eq:mathL}
\end{align}
\begin{equation}\label{eq:def_error}
E:= S(V_d),
\end{equation}
\begin{equation}\label{def:N}
N(\phi):=-(1-\eta)iV_d\left[i(\nabla \psi)^2+i\e^2(\p_s\psi)^2-i|V_d|^2(e^{-2\IM(\psi)}-1+2\IM(\psi))\right]-M(\phi),
\end{equation}
where \(M(\phi)\) is a smooth function of \(\phi\) which is a sum of terms at least quadratic, localized in the area \(\eta \neq 0\). Furthermore, \(M(\phi)\) is a sum of analytic functions of \(\phi\) multiplied by cut-off functions and 
\begin{equation}
|M(\phi) | \leq C \|\phi\|^2_{C^1(B^2)}
\end{equation}
if \( \|\nabla \phi\|_{L^\infty}+\|\phi\|_{L^\infty} \leq C_0\) for \(C_0\) small enough.  At last, if \(\phi=iV_d \psi\)
\begin{equation}\label{eq:rel_L_and_L'}
L_0(\phi)=iV_d L'(\psi)+iE \psi \quad \text{ in } \R^2 \setminus \{\xi_j^+,\xi_k^-, j=1,\dots,n^+,\ k=1,\dots,n^-\}.
\end{equation}
\end{lemma}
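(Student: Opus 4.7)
My plan is to prove the identity by expanding $S(v)$ in two distinct regimes dictated by the support of $\eta$ and then gluing the expansions together, using the fact that the additive and multiplicative ansätze agree up to quadratic order in $\phi$. The key algebraic point is that on the support of $\eta$, where $V_d$ may vanish, one must use $\phi$ as the unknown, whereas on the support of $1-\eta$, where $V_d$ is bounded away from zero, one can equivalently use $\psi=\phi/(iV_d)$. The identity \eqref{eq:rel_L_and_L'} will provide the bridge between the two formulations.

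First, in the region where $\eta=1$, the ansatz reduces to $v=V_d+\phi$, and a direct Taylor expansion of $S$ about $V_d$ gives
\[
S(V_d+\phi)=S(V_d)+L_0(\phi)+Q(\phi),
\]
where $L_0$ is exactly \eqref{L0} (its zeroth order pieces come from linearizing $(1-|v|^2)v$ at $V_d$, namely $(1-|V_d|^2)\phi-2\RE(\overline{V_d}\phi)V_d$), and $Q(\phi)=-2\RE(\overline{V_d}\phi)\phi-|\phi|^2V_d-|\phi|^2\phi$ collects the quadratic and cubic contributions. Second, in the region where $\eta=0$, the ansatz is $v=V_d e^{i\psi}$. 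I would compute $\Delta v$, $\partial_s v$ and $\partial^2_{ss}v$ by the Leibniz rule, use $|v|^2=|V_d|^2 e^{-2\IM\psi}$, and factor out the common $e^{i\psi}$. After decomposing
\[
1-|V_d|^2 e^{-2\IM\psi}=(1-|V_d|^2)+2|V_d|^2\IM\psi-|V_d|^2\bigl(e^{-2\IM\psi}-1+2\IM\psi\bigr),
\]
the result will take the form
\[
S(V_d e^{i\psi})=e^{i\psi}\Bigl\{E+iV_d\,L'(\psi)-V_d(\nabla\psi)^2-\varepsilon^2 V_d(\partial_s\psi)^2-V_d|V_d|^2\bigl(e^{-2\IM\psi}-1+2\IM\psi\bigr)\Bigr\},
\]
which when set to zero yields exactly the $(1-\eta)$-part of the stated equation. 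The interior term $M(\phi)$ accounts for two sources of discrepancy supported in $B^2\setminus B^1$: the quadratic and cubic terms $Q(\phi)$ from the additive side, and the extra contributions generated by the derivatives of $\eta$ acting on the difference $V_d e^{\phi/V_d}-(V_d+\phi)=\phi^2/(2V_d)+O(\phi^3/V_d^2)$. Since $V_d$ is bounded away from zero on $\supp(\nabla\eta)\subset B_1^2$, this difference and all its derivatives are smooth and quadratic in $\phi$, which gives the analytic structure of $M(\phi)$ together with the bound $|M(\phi)|\leq C\|\phi\|^2_{C^1(B^2)}$ provided $\|\phi\|_{L^\infty}+\|\nabla\phi\|_{L^\infty}$ is small.

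Finally, to verify the pointwise identity \eqref{eq:rel_L_and_L'} outside the zero set of $V_d$, I would substitute $\phi=iV_d\psi$ into \eqref{L0} and apply the product rule to each differential term. The terms hitting $V_d$ rather than $\psi$ reassemble into $i\psi\cdot\bigl[\Delta V_d+\varepsilon^2(\partial^2_{ss}V_d-2in\partial_sV_d-n^2V_d)-ic|\log\varepsilon|\varepsilon^2(inV_d-\partial_sV_d)\bigr]$, which together with the algebraic part $i\psi(1-|V_d|^2)V_d$ equals $iE\psi$; the terms hitting $\psi$ reproduce, once the factor $iV_d$ is pulled out, precisely the operator $iV_d L'(\psi)$ as defined in \eqref{eq:mathL}, using in particular the identity $-2\RE(\overline{V_d}\,iV_d\psi)V_d=2|V_d|^2V_d\IM\psi=iV_d\cdot(-2i|V_d|^2\IM\psi)$. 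The main obstacle is purely bookkeeping: one has to track carefully that the cross terms $2\nabla V_d\cdot\nabla\psi$ and $2\varepsilon^2\partial_sV_d\,\partial_s\psi$ arising from $\Delta(iV_d\psi)$ and $\partial^2_{ss}(iV_d\psi)$ match the terms $iV_d\cdot 2(\nabla V_d/V_d)\nabla\psi$ and $iV_d\cdot\varepsilon^2(2\partial_sV_d/V_d)\partial_s\psi$ in $iV_d L'(\psi)$, and similarly for the transport-type terms coming from the $c|\log\varepsilon|\varepsilon^2\partial_s$ piece. No deep analytical step is required; the content of the lemma is a careful algebraic reorganization that exhibits the two natural linearizations of $S$ corresponding to the additive and multiplicative perturbations.
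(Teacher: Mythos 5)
Your strategy coincides with the paper's: expand $S(v)$ for the glued ansatz, identify $L_0$ as the additive linearization and $iV_dL'$ as the multiplicative one, and verify \eqref{eq:rel_L_and_L'} by the product rule — your computation there, including the identity $-2\RE(\overline{V_d}\,iV_d\psi)V_d=2|V_d|^2V_d\IM\psi=iV_d\cdot(-2i|V_d|^2\IM\psi)$, is exactly the paper's first step, and your expansion of $S(V_de^{i\psi})$ in $\{\eta=0\}$ is correct and consistent with the paper's computation of $(1-|V_de^{i\psi}|^2)V_de^{i\psi}$.

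Where your proposal falls short is the overlap region: the two sources you list for $M(\phi)$ do not exhaust it. First, the cubic nonlinearity does not distribute over the convex combination: with $A=V_d+\phi$, $B=V_de^{i\psi}$ and $v=\eta A+(1-\eta)B$, one has $(1-|v|^2)v\neq \eta(1-|A|^2)A+(1-\eta)(1-|B|^2)B$, and the discrepancy is $\eta(1-\eta)$ times analytic functions vanishing at $A=B$, hence of size $O(|A-B|)=O(|\phi|^2)$ on $B_1^2$. This is a third source of terms in $M$ (the paper's $\zeta G_i+\overline{\zeta}H_i$ block) and is not produced by derivatives of $\eta$. Second, the expansion naturally yields the prefactor $\eta+(1-\eta)e^{i\psi}$ in front of the block containing $E$ and the linear operators; to reach the stated normal form, where $E$ appears with coefficient exactly $1$, one must divide by this factor (legitimate since it equals $1+(1-\eta)(e^{i\psi}-1)$, hence close to $1$ when $\|\phi\|_{L^\infty}$ is small), and the resulting correction $\eta(1-\eta)\frac{e^{i\psi}-1}{\eta+(1-\eta)e^{i\psi}}(\cdots)$ is yet another quadratic contribution to $M$. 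Neither point breaks your argument — both extra terms are quadratic in $\phi$ and supported where $\eta\neq 0$ — but they are precisely where the paper's proof does its real bookkeeping, and without them the claimed structure of $M$ is not yet established. (A small slip: $Q(\phi)$ is supported on all of $\{\eta\neq 0\}=B^2$, not only on $B^2\setminus B^1$.)
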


\begin{remark} In the lemma above and in its proof below, the function \(\psi=\frac{\phi}{iV_d}\) is used only in the zones where \( (1-\eta)\) does not vanish, i.e., only far from the vortices. In these zones, \(\psi\) is a distribution because \(\phi\) is a distribution in \(\R^2\) by assumption, and \(V_d\) is a smooth function which does not vanish far from the vortices.
\end{remark}
\begin{proof}

We follow \cite[Lemma 2.7]{Chiron_Pacherie_2020_a}. We start by proving \eqref{eq:rel_L_and_L'}. This  can be seen in the following computation, valid in the sense of distributions, far away from the vortices:
\begin{align*}
L_0(iV_d\psi)&= \Delta (iV_d\psi)+(1-|V_d|^2)(iV_d\psi) \\
& \quad+\e^2\left[\p^2_{ss}(iV_d\psi)-2ni\p_s(iV_d\psi)-n^2(iV_d\psi)\right]-ic|\log \e|\e^2\left[in(iV_d\psi)-\p_s(iV_d\psi)\right] \\
& \quad \quad-2\RE(iV_d\psi\overline{V_d})V_d \\
&=i \Bigl[\Delta V_d+ \e^2\left(\p^2_{ss}V_d-2ni\p_s(V_d)-n^2V_d\right)-ic|\log \e|\e^2\left(in V_d-\p_s V_d\right)\Bigr]\psi \\
& \quad +iV_d \Bigl[ \Delta \psi+2\frac{ \nabla V_d}{V_d} \nabla \psi + \e^2\left( \p^2_{ss} \psi +2\frac{\p_s V_d}{V_d}\p_s \psi -2n\p_s \psi\right)+ic|\log \e| \e^2 \p_s \psi \Bigr] \\
& \quad \quad +(1-|V_d|^2)(iV_d\psi)+2|V_d|^2\IM(\psi) V_d \\
&=iE \psi+iV_d L'(\psi).
\end{align*}
Now we decompose
\begin{align*}
S(v)  = S_0(v) + S_1(v),
\end{align*}
with
\begin{align}
S_0(v) := \Delta v + (1-|v|^2) v, \quad
S_1(v) := 
\varepsilon^2(\p^2_{ss}v-2ni\p_sv-n^2v) -ic|\log \e|\e^2(inv-\p_sv). \label{def:S_0S_1}
\end{align}

\noindent For the rest of the proof we set
\begin{equation}\nonumber
\zeta:= V_d(1+i\psi-e^{i\psi}) \text{ in } \{ (1-\eta)\neq 0\}.
\end{equation}
Since \(v= \eta (V_d+\phi)+(1-\eta)V_de^{i\psi}\) with \( \phi=iV_d\psi\), we have

\begin{align*}
\Delta v &= \eta \left( \Delta V_d +\Delta \phi\right) +(1-\eta)\Delta (V_de^{i\psi})+2\nabla \eta  \left[\nabla V_d+i\nabla (V_d \psi)-\nabla (V_de^{i\psi})\right] +\Delta \eta (V_d+iV_d \psi -V_de^{i\psi}) \\
&= \eta \left(\Delta V_d +\Delta \phi\right) +(1-\eta)(\Delta V_de^{i\psi}+V_d\Delta (e^{i\psi})+2\nabla V_d \nabla (e^{i\psi})) +2\nabla \eta \nabla \zeta +\Delta \eta \zeta \\
&= \eta \left(\Delta V_d+ \Delta \phi \right) +(1-\eta)(\Delta V_d e^{i\psi}+V _d(i\Delta \psi-(\nabla \psi)^2)e^{i\psi}+2i\nabla V_d \nabla \psi e^{i\psi})+2\nabla \eta \nabla \zeta +\Delta \eta \zeta.
\end{align*}

\noindent By using that far from the vortices, \(\Delta \phi= \Delta (iV_d\psi)=i\Delta V_d\psi +iV_d \Delta \psi +2i \nabla V_d \nabla \psi\) we can write
\begin{multline}\label{eq:Laplacian_v}
\Delta v =(\eta +(1-\eta)e^{i\psi}) \left( \Delta V_d+ \Delta \phi \right) 
+(1-\eta)e^{i\psi} \left[  -V_d(\nabla \psi)^2-i\Delta V_d \psi \right] +2\nabla \eta \nabla \zeta +\Delta \eta \zeta.
\end{multline}
We then set \( A:=V_d+\phi\) and \(B:=V_de^{i\psi}\) (\(B\) is defined far from the vortices), thus \(v= \eta A+(1-\eta)B\) and
\begin{align*}
(1-|v|^2)v&= (1-|\eta A+(1-\eta)B|^2)(\eta A+(1-\eta)B) \\
&= \left[1-\eta^2|A|^2-(1-\eta)^2|B|^2-2\eta (1-\eta)\RE(A\overline{B})\right](\eta A +(1-\eta)B).
\end{align*}
We want to make the terms \( \eta (1-|A|^2)A+(1-\eta)(1-|B|^2)B\) appear. Hence we write
\begin{equation*}\begin{split}
(1-|v|^2)v=&\,\eta (1-|A|^2)A+\eta A[ (1-\eta^2)|A|^2-(1-\eta)^2|B|^2-2\eta (1-\eta)\RE(A\overline{B})] \\
&+(1-\eta)(1-|B|^2)B +(1-\eta)B [ (1-(1-\eta)^2)|B|^2-\eta^2|A|^2-2\eta(1-\eta)\RE(A\overline{B})].
\end{split}\end{equation*}
We factorize \(\eta(1-\eta)\) and write
\begin{align*}
(1-|v|^2)v&=\eta (1-|A|^2)A+(1-\eta)(1-|B|^2)B \\
&\quad +\eta(1-\eta)\left[(1+\eta)A|A|^2-(1-\eta)A|B|^2-2\eta A\RE(A\overline{B})\right] \\
& \quad+\eta (1-\eta)\left[ (2-\eta)B|B|^2-\eta B|A|^2-2(1-\eta)B\RE(A\overline{B})\right] \\
&=\eta (1-|A|^2)A+(1-\eta)(1-|B|^2)B  \\
& \quad +\eta (1-\eta)\Bigl[A|A|^2+2B|B|^2-A|B|^2-2B\RE(A\overline{B}) \\
& \quad \quad +\eta \left(A|A|^2-B|B|^2+A|B|^2-B|A|^2-2A \RE(A \overline{B} \right) +2B \RE (A \overline{B})\Bigr] \\
&= \eta (1-|A|^2)A+(1-\eta)(1-|B|^2)B \\
& \quad +\eta (1-\eta) \left[ F_1(A,B)+\eta F_2(A,B)\right]
\end{align*}
where \(F_1(A,B), F_2(A,B)\) are real analytic functions of \(A\) and \(B\) and vanish for  \(A=B\). Since, in the zone where \(\eta(1-\eta)\) is nonzero, \(A-B=\zeta\) we can write
\begin{align*}
(1-|v|^2)v=\eta (1-|A|^2)A+(1-\eta)(1-|B|^2)B+\eta (1-\eta)\left[\zeta G_1(\phi)+\overline{ \zeta}H_1(\phi)+\eta (\zeta G_2(\phi)+\overline{\zeta}H_2(\phi) \right]
\end{align*}
where \( G_1,G_2,H_1,H_2\) are  real analytic functions of \(\phi\) satisfying \( |H_{i}(\phi)|, |G_{i}(\phi)| \leq C(1+|\phi|+|e^{\phi}|)\), $i=1,2$, where \(C>0\) is a universal constant. Since \(A=V_d+\phi\) we have
\begin{align}
(1-|A|^2)A &= (1-|V_d+\phi|^2)(V_d+\phi) \nonumber \\
&=(1-|V_d|^2-|\phi|^2-2\RE(\overline{V_d}\phi))(V_d+\phi)  \nonumber\\
&=(1-|V_d|^2)V_d-2\RE(\overline{V_d}\phi) V_d+(1-|V_d|^2)\phi -|\phi|^2(V_d+\phi)-2\RE(\overline{V_d}\phi)\phi. \label{eq:A}
\end{align}

\noindent We also have, when \( (1-\eta)\neq 0\), \( B=V_de^{i\psi}\) and
\begin{align}
(1-|B|^2)B &= (1-|V_de^{i\psi}|^2)V_de^{i\psi} \nonumber \\
&= (1-|V_d|^2e^{-2\psi_2})V_de^{i\psi}\nonumber  \\
&= (1-|V_d|^2)V_de^{i\psi} +2|V_d|^2\IM(\psi)V_de^{i\psi}-|V_d|^2V_de^{i\psi}(e^{-2\IM(\psi)}-1+2\IM(\psi)) \nonumber \\
&=V_de^{i\psi} \left[ (1-|V_d|^2)+2|V_d|^2\IM(\psi)-|V_d|^2(e^{-2\IM(\psi)}-1+2\IM(\psi)) \right]. \label{eq:B}
\end{align}
We use the relations \eqref{eq:A} and \eqref{eq:B}, along with \( 2|V_d|^2\IM(\psi)=-2\RE(\overline{V_d}\phi)\), to obtain
\begin{align}
(1-|v|^2)v &= (\eta +(1-\eta)e^{i\psi}) \left[ (1-|V_d|^2)V_d -2\RE(\overline{V_d}\phi) V_d+(1-|V_d|^2)\phi
\right] \nonumber \\
& -\eta \left( |\phi|^2(V_d+\phi)+2\RE(\overline{V_d}\phi)\phi \right) \nonumber \\
&+(1-\eta)e^{i\psi}\Bigl[(|V_d|^2V_d \left(e^{-2\IM(\psi)}-1+2\IM(\psi) \right)- (1-|V_d|^2)\phi\Bigr] \nonumber\\
&+ \eta(1-\eta)\left[\zeta G_1(\phi)+\overline{ \zeta}H_1(\phi)+\eta (\zeta G_2(\phi)+\overline{\zeta}H_2(\phi) \right]. \label{eq:potential}
\end{align}
We add \eqref{eq:Laplacian_v} and \eqref{eq:potential} to see that
\begin{equation}\begin{split}\label{eq:S_equal_0_int1}
S_0(v)=&\,\left( \eta +(1-\eta)e^{i\psi} \right) \Bigl[ (\Delta V_d+(1-|V_d|^2)V_d) +\Delta \phi -2\RE(\overline{V_d}\phi) V_d+(1-|V_d|^2)\phi  \Bigr]  \\
&-\eta \left( |\phi|^2(V_d+\phi)+2\RE(\overline{V_d}\phi)\phi \right)\\
&+(1-\eta)iV_de^{i\psi} \left[i(\nabla \psi)^2-\frac{\Delta V_d}{V_d}\psi-i|V|^2(e^{-2\IM(\psi)}-1+2\IM(\psi))-(1-|V_d|^2)\psi\right]  \\
&+\eta (1-\eta) \left[\zeta G_1(\phi)+\overline{ \zeta}H_1(\phi)+\eta (\zeta G_2(\phi)+\overline{\zeta}H_2(\phi) \right]+2\nabla \eta \nabla \zeta +\Delta \eta \zeta.
\end{split}\end{equation}
Similarly we compute
\begin{align*}
S_1(v)&=\eta \left[ S_1(V_d)+S_1(\phi)\right]+(1-\eta)S_1(V_de^{i\psi}) +(\e^2 \p^2_{ss} \eta -2\e^2 n i\p_s \eta+ic |\log\e| \e^2\p_s \eta)V_d(i\psi+1-e^{i\psi}) \\
& \quad \quad +2\e^2\p_s\eta\p_s (iV_d\psi +V_d-V_de^{i\psi} ) \\
&=\eta \Bigl[ S_1(V_d)+S_1(\phi) \Bigr] \\
& \quad+(1-\eta)e^{i\psi}\Bigl[ S_1(V_d)+iV_d \bigl(\e^2\p^2_{ss}\psi+2\frac{\p_s V_d}{V_d}\p_s \psi-2in\p_s \psi+ic|\log \e| \e^2 \p_s \psi \bigr) +iV_d \e^2i (\p_s \psi)^2 \Bigr] \\
& \quad \quad +(\e^2 \p^2_{ss} \eta -2\e^2 n i\p_s \eta+ic |\log\e| \e^2\p_s\eta)\zeta +2\e^2\p_s\eta\p_s \zeta.
\end{align*}
By using that, away from the vortices,
\begin{align*}
S_1(\phi)=S_1(iV_d\psi)=  iS_1(V_d)\psi +iV_d\bigl(\e^2\p^2_{ss}\psi+2\frac{\p_s V_d}{V_d}\p_s \psi-2in\p_s \psi+ic|\log \e| \e^2 \p_s \psi \bigr)
\end{align*}
we obtain
\begin{align}\label{eq:operator_S_1}
S_1(v)&= \left( \eta +(1-\eta)e^{i\psi} \right) \Bigl[S_1(V_d)+ S_1(\phi) \Bigr]  +(1-\eta)e^{i\psi}\bigl[ iV_di\e^2(\p_s\psi)^2-iS_1(V_d)\psi)\bigr] \nonumber \\
& \quad \quad+(\e^2 \p^2_{ss} \eta -2\e^2 n i\p_s \eta+ic |\log\e| \e^2\p_s \eta)\zeta +2\e^2\p_s\eta\p_s \zeta.
\end{align}
Putting together \eqref{eq:operator_S_1}  and \eqref{eq:S_equal_0_int1} we deduce that \(S(v)=0\) if and only if
\begin{equation}\begin{split}\label{eq:first_equivalence}
&\left( \eta +(1-\eta)e^{i\psi} \right) \Bigl[ S(V_d)+\Delta \phi+ S_1(\phi) -2\RE(\overline{V_d}\phi)V_d+(1-|V_d|^2)\phi\Bigr] 
-\eta \Bigl[ |\phi|^2(V_d+\phi)+2\RE(\overline{V_d}\phi)\phi \Bigr] \\
&\qquad +(1-\eta)iV_de^{i\psi} \Bigl[ i(\nabla \psi)^2+i\e^2(\p_s\psi)^2-|V_d|^2(e^{-2\IM(\psi)}-1+2\IM(\psi))-\frac{S(V_d)}{V_d}\psi\Bigr] \\
&\qquad+\eta (1-\eta) \left[\zeta G_1(\phi)+\overline{ \zeta}H_1(\phi)+\eta (\zeta G_2(\phi)+\overline{\zeta}H_2(\phi) \right]\\
&\qquad +(\Delta \eta +\e^2 \p^2_{ss} \eta -2\e^2 n i\p_s \eta+ic |\log\e| \e^2\p_s \eta)\zeta+2\e^2\p_s\eta\p_s \zeta+2 \nabla \eta \nabla \zeta=0.
\end{split}\end{equation}

\noindent We then divide the previous equation by \( \eta +(1-\eta)e^{i\psi}\). This term does not vanish if \( \|iV_d \psi\|_{L^\infty(\R^2)}\) is small enough. Indeed \( \eta +(1-\eta)e^{i\psi}=1+(1-\eta)(e^{i\psi}-1)\) and wherever \(\eta \neq 1\), \(V_d\) is a smooth function which does not vanish. Hence \(|\psi| \leq\frac{|iV_d\psi|}{|V_d|}\leq C \|\phi\|_{L^\infty(\R^2)}\) with \(\phi=iV_d\psi\). Thus \( (1-\eta)|e^{i\psi}-1| \leq C (1-\eta) |\psi| \leq C \|\phi\|_{L^\infty(\R^2)}\).

We observe that
\begin{align*}
\frac{(1-\eta)e^{i\psi}}{\eta +(1-\eta)e^{i\psi}}= (1-\eta) +\eta (1-\eta)\frac{e^{i\psi}-1}{\eta+(1-\eta)e^{i\psi}}.
\end{align*}
Thus, \eqref{eq:first_equivalence} becomes
\begin{multline}\label{eq:S_equl_0_int2}
E+L_0(\phi)-(1-\eta) iS(V_d)\psi -\frac{\eta}{\eta+(1-\eta)e^{i\psi}}\left(|\phi|^2(V_d+\phi)+2\RE(\overline{V_d} \phi)\phi \right) \\
+(1-\eta)iV_d\left[ i(\nabla \psi)^2+i\e^2(\p_s\psi)^2-|V_d|^2(e^{-2\IM(\psi)}-1+2\IM(\psi)) \right]+M_1(\phi)=0
\end{multline}
with \(E\) defined by \eqref{eq:def_error}, \(L_0\) defined in \eqref{L0} and
\begin{equation*}\begin{split}
M_1(\phi):=&\,\eta(1-\eta)\frac{e^{i\psi}-1}{\eta+(1-\eta)e^{i\psi}}\Bigl\{-iS(V_d) \psi
+iV_d \Bigl[i(\nabla \psi)^2+i\e^2(\p_s\psi)^2 \\
-& |V_d|^2(e^{-2\IM(\psi)}-1+2\IM(\psi)) \Bigr] \Bigr\} \\ &+\frac{\eta(1-\eta)}{\eta+(1-\eta)e^{i\psi}} \left[\zeta G_1(\phi)+\overline{ \zeta}H_1(\phi)+\eta (\zeta G_2(\phi)+\overline{\zeta}H_2(\phi) \right] \\
&+\frac{\Delta \eta +\e^2 \p^2_{ss} \eta -2\e^2 n i\p_s \eta+ic |\log\e| \e^2\p_s\eta}{\eta +(1-\eta)e^{i\psi}}\zeta+\frac{2\nabla \eta \nabla \zeta +2e^2\p_s \eta \p_s \zeta}{\eta +(1-\eta)e^{i\psi}}.
\end{split}\end{equation*}
 We note that \(M_1(\phi)\) is nonzero only when \(\eta (1-\eta) \neq 0\). Furthermore we can check that
 \begin{equation}\nonumber
 |M_1(\phi)| \leq C \|\psi\|^2_{C^1(B_1^2)} \leq C \|\phi\|^2_{C^1(B^2)}.
 \end{equation}
Now we use \eqref{eq:rel_L_and_L'} and we obtain that \(S(v)=0\) if and only if 
 \begin{equation}\begin{split}\label{eq:S_equal_0_int3}
 E+\eta L_0(\phi)+ &(1-\eta)iV_dL'(\psi)+(1-\eta)iV_d\left[ i(\nabla \psi)^2+i\e^2(\p_s\psi)^2-|V_d|^2(e^{-2\IM(\psi)}-1+2\IM(\psi))\right] \\
 &+\frac{\eta}{\eta+(1-\eta)e^{i\psi}} \left(|\phi|^2(V_d+\phi)+2\RE(\overline{V_d} \phi)\phi \right)+M_1(\phi)=0.
 \end{split}\end{equation}
Noticing that 
 \begin{equation}\nonumber
 \frac{\eta}{\eta +(1-\eta)e^{i\psi}}=1-\frac{(1-\eta)e^{i\psi}}{\eta +(1-\eta)e^{i\psi}} =\eta +\eta (1-\eta) \frac{1-e^{i\psi}}{\eta +(1-\eta)e^{i\psi}},
 \end{equation}  we write 
 \begin{multline}
  E+\eta L_0(\phi)+ (1-\eta)iV_dL'(\psi)+(1-\eta)iV_d\left[ i(\nabla \psi)^2+i\e^2(\p_s\psi)^2-|V_d|^2(e^{-2\IM(\psi)}-1+2\IM(\psi))\right] \\
 +\eta \left(|\phi|^2(V_d+\phi)+2\RE(\overline{V_d} \phi)\phi \right) +M_1(\phi)+M_2(\phi)=0,
 \end{multline}
 where
 \begin{align*}
M_2(\phi):=\eta (1-\eta) \frac{1-e^{i\psi}}{\eta +(1-\eta)e^{i\psi}} \left(|\phi|^2(V_d+\phi)+2\RE(\overline{V_d} \phi)\phi \right).
 \end{align*}
The same arguments used for \(M_1(\phi)\) show that \(M_2(\phi)\) is nonzero only when \(1 \leq \tilde{r}\leq 2\) and when \(\eta \neq 0\) and 
\begin{equation*}
|M_2(\phi)| \leq C \|\psi\|^2_{C^1(B_1^2)}\leq C \|\phi\|^2_{C^1(B^2)}.
\end{equation*}
Hence, by defining \(M(\phi):=M_1(\phi)+M_2(\phi)\) and
\begin{equation*}
N(\phi):= (1-\eta)iV_d\left[ i(\nabla \psi)^2+i\e^2(\p_s\psi)^2-|V_d|^2(e^{-2\IM(\psi)}-1+2\IM(\psi))\right]+M(\phi)
\end{equation*}
we obtain that \( S(v)=0\) if and only if \( E+\eta L_0(\phi)+(1-\eta)iV_dL'(\psi)-N(\phi)=0\) with \(N\) satisfying the desired properties.
\end{proof}

From the previous lemma, the problem we need to solve is 
\begin{equation}
\eta L_0(iV_d\psi)+(1-\eta)iV_dL'(\psi)=-E+\mathcal{N}(\psi) \text{ in } \times \R^+\times \R.
\end{equation}
With some abuse of notation we call 
\begin{equation}\label{def_mathcal_L_eps}
\L^\e(\phi):=\eta L_0(iV_d\psi)+(1-\eta)iV_dL'(\psi), \quad \psi=\frac{\phi}{iV_d}.
\end{equation}

\subsection{Another form of the equation near each vortex} 
In order to analyze the equation near each vortex, it will be useful to write it in a translated variable. Namely, we define 
$$\xi_j:=\begin{cases}
\xi_j^+&\quad \mbox{ for }j=1,\ldots,n^+,\\
\xi_{n^++1}:=0 & \quad\mbox{ if }n^-=1\;\mbox{ (i.e., the case of Theorem \ref{th:main2})}.
\end{cases}$$ 
We recall that \(d_\e\) is given by \eqref{eq:relationd}. Denote   $\z:=z-\xi_j$ and the function $\phi_j(\z)$ through the relation
\begin{equation}\label{defphi_j}
\phi_j(\z)=iw(\z)\psi(z), \ \ |\z|< d_\e.
\end{equation}
That is,
\begin{equation*}\nonumber
iV_d(z) \psi(z)=\phi_j(\z)\alpha_j(z),\qquad \mbox{where}\qquad \alpha_j(z):=\frac{V_d(z)}{w(z-\xi_j)}.
\end{equation*}
Hence in the translated variable the unknown \eqref{eq1} becomes, in $|\z|< d_\e$,
\begin{equation*}
v(z)=\alpha_j(z)\left(w(\z)+\phi_j(\z)+(1-\eta_1(\z))w(\z)\left[ e^{\frac{\phi_j(\z)}{w(\z)}}-1-\frac{\phi_j(\z)}{w(\z)}\right] \right).
\end{equation*}
We recall that from \eqref{eq:def_error} that \( E=S(V_d)\).
For $\phi_j, \psi$ linked through formula \eqref{defphi_j} we define
\begin{eqnarray}\label{defL_j}
L_j^\e(\phi_j)(\z)&:=&iw(\z)L'(\psi)(\z+\xi_j) =\frac{L_0(iV_d\psi)(z)}{\alpha_j(z)}-\frac{E(z)}{V_d(z)}\phi_j(\tilde{z}) \nonumber \\
&=&\frac{L_0(\phi_j(\z) \alpha_j(z))}{\alpha_j(z)}-\frac{E(z)}{V_d(z)}\phi_j(\tilde{z}),
\end{eqnarray}
with $L_0$ defined by \eqref{L0}.

Let us also define
\begin{equation*}\begin{split}
S_2(V):=&\, \p^2_{rr}V +\frac{1}{r}\p_rV+\frac{1}{r^2}\p_{ss}V+\e^2(\p^2_{ss}V -2ni\p_sV-n^2V)+ic \e^2|\log\e|(\p_sV -inV), \nonumber \\
S_3(V):=&\, \p^2_{rr}V +\frac{1}{r}\p_rV+\frac{1}{r^2}\p_{ss}V+\e^2(\p^2_{ss}V-2ni\p_sV)+ic \e^2|\log\e|\p_sV. \nonumber
\end{split}\end{equation*} Notice that
\begin{equation*}\nonumber
E(z)=S_2(\alpha_jw)+(1-|w|^2|\alpha_j|^2)w\alpha_j,
\end{equation*}
where we assume $\alpha_j$ and $w$ evaluated at $z$ and $\z$ respectively. Thus, using the equation satisfied by $w$,
\begin{equation*}\begin{split}
E=&\, wS_2(\alpha_j)+(1-|w|^2|\alpha_j|^2)\alpha_j w+2\nabla \alpha_j \nabla w+2\e^2\p_s\alpha_j \p_s w +\alpha_j S_3(w) \nonumber \\
=&\, wS_3(\alpha_j)-n^2\e^2w\alpha_j+c|\log\e|\e^2n\alpha_jw+(1-|w|^2|\alpha_j|^2)\alpha_j w+ 2\nabla \alpha_j \nabla w +2\e^2\p_s\alpha_j \p_s w\\
&\,+\alpha_j[\e^2(\p^2_{ss}w-2ni\p_sw)+ic \e^2|\log \e|\p_sw-(1-|w|^2)w ]. \nonumber
\end{split}\end{equation*}
This allows us to conclude
\begin{equation}\begin{split}\label{eq:defL_j}
L_j^\e(\phi_j) =& \, L_0(\phi_j)+\e^2(\p^2_{ss} \phi_j-2in\p_s \phi_j-n^2\phi_j)+ic|\log \e|\e^2(\p_s\phi_j-in\phi_j) \\
&  +2(1-|\alpha_j|^2)\RE(\overline{w}\phi_j)w -\Bigl(2\frac{\nabla \alpha_j}{\alpha_j}\frac{\nabla w}{w}+2\e^2\frac{\p_s \alpha_j}{\alpha_j}\frac{\p_s w}{w} \\
& \qquad +\e^2 \frac{(\p^2_{ss}w-2ni\p_sw)}{w}+ic\e^2|\log\e| \frac{\p_sw}{w}-n^2\e^2+n\e^2|\log\e| \Bigr)\phi_j \\
& +2\frac{\nabla \alpha_j}{\alpha_j}\nabla \phi_j+2\e^2\frac{\p_s \alpha_j}{\alpha_j}\p_s \phi_j,
\end{split}\end{equation}
Let us point out that, for $|\z|<d_\e$,
\begin{equation}\label{eq:estimatesonalpha_j}
 |\alpha_j(\z)|=1+O_\e(\e^2|\log \e|), \ \ \ \nabla \alpha_j(\z)=O_ \e(\e\sqrt{|\log \e|}), \ \ \  \Delta \a_j=O_\e(\e^2|\log \e|).
\end{equation}
With this in mind, we can see that the linear operator $L_j^\e$ is a small perturbation of $L_0$.

\subsection{Symmetry assumptions on the perturbation.}
Writing  $z=x_1+ix_2=re^{is}$ it can be seen that $V_d$  satisfies
\begin{equation*}
 V_d(x_1,-x_2)=\overline{V_d}(x_1,x_2) \quad \text{ and } V_d(e^{\frac{2i\pi}{n^+}}z)=V_d(z).
\end{equation*}
These symmetries are compatible with the solution operator $S$ defined in \eqref{S}: if $S(V)=0$ and $U(z):=\overline{V}(-x_1,x_2)$, then $S(U)=0$, and the same happens for $U(z):=\overline{V}(x_1,-x_2)$. Thus we look for a solution $V$ satisfying
\begin{equation*}
 V(x_1,-x_2)=\overline{V}(x_1,x_2), \quad V(e^{\frac{2i\pi}{n^+}}z)=V(z),
\end{equation*}
what is equivalent to ask
\begin{equation}\label{eq:eqsymmetriesofpsi}
\psi(x_1,-x_2)=-\overline{\psi}(x_1,x_2), \quad \psi(e^{\frac{2i\pi}{n^+}}z)=\psi(z).
\end{equation}

\section{Error estimates}\label{V}

The aim of this section is to compute the error of the approximation $V_d$ given by \eqref{eq:approx}. With this purpose, we divide the solution operator $S$ given in \eqref{SV} into three parts:
\begin{equation}\begin{split}\label{S}
S_a(V) & :=(\p^2_{rr}V+\frac{1}{r}\p_r V+\frac{1}{r^2}\p_{s s}^2 V)+(1-|V|^2)V, \\
S_b(V)& :=  \e^2(\p^2_{ss} V-2in\p_sV-n^2V), \\
S_c(V) &:= -ic|\log \e|\e^2(inV-\p_sV).  
\end{split}\end{equation}
Notice that $S_a$ corresponds to the solution operator for the Ginzburg-Landau equation in 2D. Likewise, $S_b$ represents the effect of the symmetry of the construction and $S_c$ the effect of working with a traveling wave in the Gross-Pitaevskii equation. 

By simplicity we denote
\begin{equation*}
w^i(z):=w(z-\xi^+_i), \quad w^j(z):=w(z-\xi^+_j),\quad 
w^k(z):=w(z-\xi^-_k),\quad w^l(z):=w(z-\xi^-_l),
\end{equation*}
i.e., we use the letters $i,j$ for the vortex with degree +1 and $k,l$ for the vortex of degree -1.

We will expand the error terms for the general case of $V_d$ given in \eqref{eq:approx}, so that they can be used for other constructions with a different number of filaments. Nevertheless, the estimates proved in the lemmas of this section correspond to the cases $n^-=0$ (see Theorem \ref{th:main1}) and $n^-=1$, $\xi_1^-=0$ (Theorem \ref{th:main2}).
\subsection{Size of the error \(S_a(V_d)\)}
Computing the gradient of the approximation:
\begin{align*}
\nabla V_d 
& = \sum_{i=1}^{n^+} \nabla w^i \prod_{j\neq i}w^j \prod_{k=1}^{n^-} \overline{w}^k+\sum_{k=1}^{n^-} \nabla \overline{w}^k \prod_{l\neq k} \overline{w}^l \prod_{i=1}^{n^+} w^i,
\end{align*}
we deduce
\begin{equation*}\begin{split}
\Delta V_d =&\left(\sum_{i=1}^{n^+} \Delta w^i \prod_{j\neq i} w^j+\sum_{i=1}^{n^+} \sum_{j\neq i} \nabla w^i \nabla w^j \prod_{m\neq i,j} w^m \right) \prod_{k=1}^{n^-} \overline{w}^k +2 \sum_{i=1}^{n^+} \sum_{k=1}^{n^-} \nabla w^i \nabla \overline{w}^k \prod_ {j\neq i} w^j \prod_{l\neq k} \overline{w}^k\\
&+\left(\sum_{k=1}^{n^-}\Delta \overline{w}^k \prod_{l\neq k} \overline{w}^l+\sum_{k=1}^{n^-} \sum_{l\neq k} \nabla \overline{w}^k \nabla \overline{w}^l \prod_{m\neq k,l} \overline{w}^m \right) \prod_{i=1}^{n^+}w^i .
\end{split}\end{equation*}

\noindent Thus we can write 
\begin{multline*}
\Delta V_d=V_d \,\Bigl(\sum_{i=1}^{n^+} \frac{\Delta w^i}{w^i}+\sum_{k=1}^{n^-} \frac{\Delta \overline{w}^k}{\overline{w}^k} 
+\sum_{i=1}^{n^+} \sum_{j\neq i} \frac{\nabla w^i}{w^i}\frac{\nabla w^j}{w^j} +\sum_{k=1}^{n^-} \sum_{l\neq k} \frac{\nabla \overline{w}^k}{\overline{w}^k}\frac{\nabla \overline{w}^l}{\overline{w}^l}+2\sum_{i=1}^{n^+} \sum_{k=1}^{n^-} \frac{\nabla w^i}{w^i}\frac{\nabla \overline{w}^k}{\overline{w}^k} \Bigr).
\end{multline*}
Recalling that $w$ solves \eqref{GL2} we find
\begin{align}\label{Sa_0}
S_a(V_d)&=V_d \,\Bigl\{ -\sum_{i=1}^{n^+} (1-|w^i|^2)-\sum_{k=1}^{n^-} (1-|w^k|^2)+\left( 1- \left| \prod_{i=1}^{n^+} \prod_{k=1}^{n^-} w^i \overline{w}^k\right|^2 \right) \\
&+\sum_{i=1}^{n^+} \sum_{j\neq i} \frac{\nabla w^i}{w^i}\frac{\nabla w^j}{w^j}+\sum_{k=1}^{n^-} \sum_{l\neq k} \frac{\nabla \overline{w}^k}{\overline{w}^k}\frac{\nabla \overline{w}^l}{\overline{w}^l}+2\sum_{i=1}^{n^+} \sum_{k=1}^{n^-} \frac{\nabla w^i}{w^i}\frac{\nabla \overline{w}^k}{\overline{w}^k} \Bigr\}.
\end{align}
We will use polar coordinates centered at \(\xi^+_j,\xi^-_k\), namely
\begin{equation}
z-\xi^+_j=r_je^{i\theta_j}, \quad z-\xi_k^-=r_ke^{i\theta_k}.
\end{equation}
Hence we can write
\begin{align*}
w^j & =w(z-\xi^+_j)=\r(r_j)e^{i\theta _j}=\r_j e^{i \theta_j}, \\
\overline{w}^k & =\overline{w}(z-\xi^-_k)=\r(r_k)e^{-i\theta_k}=\r_k e^{-i\theta_k}.
\end{align*}
We have
\begin{align}\label{derW}
w_{x_1}^j&= e^{i\theta_j}(\r'_j\cos \theta_j-i\frac{\r_j}{r_j}\sin \theta_j),\quad \quad w_{x_2}^j= e^{i\theta_j}(\r'_j \sin \theta_j+i\frac{\r_j}{r_j}\cos \theta_j), \\
\overline{w}_{x_1}^k&= e^{-i\theta_k}(\rho'_k\cos \theta_k+i\frac{\r_k}{r_k}\sin \theta_k),\quad \quad \overline{w}_{x_2}^k=e^{-i\theta_k}(\r'_k\sin \theta_k-i\frac{\r_k}{r_k}\cos \theta_k).
\end{align}
Hence 
\begin{equation*}\begin{split}
w_{x_1}^j w_{x_1}^l&=e^{i (\theta_j+\theta_l)} \Bigl\{ \r'_j \r'_l \cos \theta_j \cos \theta_l-\frac{\rho_j \rho_l}{r_j r_l}\sin \theta_j \sin \theta_l
-i \left( \frac{\rho'_j \rho_l}{r_l}\cos \theta_j \sin \theta_l +\frac{\rho'_l\rho_j}{r_j}\sin \theta_j \cos \theta_l \right) \Bigr\},\\
w_{x_2}^j w_{x_2}^l&=e^{i(\theta_j+\theta_l)}\Bigl\{ \r'_j\r'_l \sin \theta_j \sin \theta_l-\frac{\r_j \r_l}{r_j r_l}\cos \theta_j \cos \theta_l +i \left( \frac{ \r'_j \r_l}{r_l}\sin \theta_j \cos \theta_l+\frac{\r'_l \r_j}{r_j}\sin \theta_l \cos \theta_j \right) \Bigr\},
\end{split}\end{equation*}
and
\begin{equation*}\begin{split}
\frac{\nabla w^i}{w^i}\frac{\nabla w^j}{w^j}&=  \left(\frac{\r'_i\r'_j}{\r_i \r_j}-\frac{1}{r_i r_j}\right) \cos (\theta_i-\theta_j) +i\left(\frac{\r'_i}{\r_i r_j}-\frac{\r'_j}{\r_j r_i} \right)\sin(\theta_i-\theta_j),\\
\frac{\nabla \overline{w}^k}{\overline{w}^k}\frac{\nabla \overline{w}^l}{\overline{w}^l}&=\left( \frac{\r'_k\r'_l}{\r_k\r_l}-\frac{1}{r_kr_l}\right)\cos (\theta_k-\theta_l)-i\left(\frac{\r'_k}{\r_k r_k}-\frac{\r'_l}{\r_l r_k} \right)\sin(\theta_k-\theta_l).
\end{split}\end{equation*}
On the other hand,
\begin{equation*}\begin{split}
w_{x_1}^i \overline{w}_{x_1}^k&=e^{i(\theta_i-\theta_k)} \Bigl\{ \r'_i \r'_k \cos \theta_i \cos \theta_k+\frac{\r_i \r_k}{r_i r_k}\sin \theta_i \sin \theta_k +i\left( \frac{\r'_i\r_k}{r_k}\cos \theta_i \sin \theta_k-\frac{\r'_k\r_i}{r_i}\cos \theta_k \sin \theta_i\right) \Bigr\},\\
w_{x_2}^i \overline{w}_{x_2}^k&=e^{i(\theta_i-\theta_k)} \Bigl\{ \r'_i\r'_k \sin \theta_i \sin \theta_k+\frac{\rho_i \rho_k}{r_i r_k} \cos \theta_i \cos \theta_k -i \left( \frac{\r'_i \r_k}{r_k}\sin \theta_i \cos \theta_k-\frac{\r'_k \r_i}{r_i}\sin \theta_k \cos \theta_i\right) \Bigr\},
\end{split}\end{equation*}

\begin{equation*}
\frac{\nabla w^i}{w^i}\frac{\nabla \overline{w}^k}{\overline{w}^k}=\left(\frac{\r'_i\r'_k}{\r_i \r_k}+\frac{1}{r_i r_k} \right)\cos (\theta_i-\theta_k)+i \left( \frac{\r'_i}{\r_i r_k} +\frac{\r'_k}{\r_k r_i}\right)\sin (\theta_k-\theta_i).
\end{equation*}
Therefore, we can write the error as
\begin{equation}\begin{split}\label{eq:expression_error}
S_a(V_d) =&\,V_d\, \Bigl\{ -\sum_{i=1}^{n^+} (1-|w^i|^2)-\sum_{k=1}^{n^-}(1-|w^k|^2)+1-\left| \prod_{i=1}^{n^+} \prod_{k=1}^{n^-} w^i \overline{w}^k \right|^2\\
&+\sum_{i=1}^{n^+} \sum_{j\neq i} \left(\frac{\r'_i\r'_j}{\r_i\r_j}-\frac{1}{r_i r_j} \right)\cos (\theta_i-\theta_j)+i\left(\frac{\r'_i}{\r_i r_j}-\frac{\r'_j}{\r_j r_i} \right)\sin (\theta_i-\theta_j) \\
&+\sum_{k=1}^{n^-} \sum_{l\neq k} \left(\frac{\r'_k \r'_l}{\r_k \r_l}-\frac{1}{r_k r_l} \right)\cos ( \theta_k-\theta_l)-i\left(\frac{\r'_k}{\r_k r_l}-\frac{\r'_l}{\r_l r_k} \right)\sin (\theta_k-\theta_l) \\
&+2\sum_{i=1}^{n^+} \sum_{k=1}^{n^-} \left( \frac{\r'_i \r'_k}{\r_i \r_k}+\frac{1}{r_i r_k} \right)\cos (\theta_i-\theta_k)+i\left(\frac{\r'_i}{\r_i r_k}+\frac{\r'_k}{\r_k r_i} \right)\sin (\theta_k-\theta_i)\Bigr\}.
\end{split}\end{equation}

Let us define the total number of filaments as
$$N:=n^++n^-,$$ 
and by simplicity denote 
\begin{equation}\label{xij}
\begin{cases}
\xi_j=\xi_j^+ \quad \mbox{ for }j=1,\cdots,n^+,\\
\xi_N=0 \quad \mbox{ if }n^-=1,
\end{cases}
\end{equation}
and $r_je^{i\theta_j}:=re^{is}-\xi_j$, $j=1,\ldots, N$. Notice that $N$ will be $n^+$ and $n^++1$ for Theorem \ref{th:main1} and Theorem \ref{th:main2} respectively.

\begin{lemma}\label{errorA} Let us denote $E_a:=S_a(V_d)$, with $S_a$ and $V_d$ defined in \eqref{S} and \eqref{eq:approx} respectively.  There exists $C>0$ such that
\begin{equation}
\|E_a\|_{L^\infty(r_j<3)}\leq C\e \sqrt{|\log\e|}, \ \ \ \|\nabla E_a\|_{L^\infty(r_j<3)} \leq C\e \sqrt{|\log \e|}, \quad \text{ for all } j=1,\dots,N.
\end{equation}
Writing $E_a=iV_dR_a=iV_d(R_a^1+iR_a^2)$, in the region \( \bigcap_{j=1}^{N} \{ r_j >2\}\) we have 
\begin{eqnarray}
|R_a^1|\leq C\sum_{j=1}^{N}\frac{\e\sqrt{|\log \e|}}{r_j^3},\quad |\nabla R_a^1|\leq C\sum_{j=1}^{N}\frac{\e\sqrt{|\log\e|}}{r_j^4},\\
 |R_a^2|\leq C \sum_{j=1}^{N} \frac{\e\sqrt{|\log \e|}}{r_j}, \quad |\nabla R_a^2|\leq C \sum_{j=1}^{N} \frac{\e\sqrt{|\log \e|}}{r_j^2}.
\end{eqnarray}
Furthermore, 
 \[|R_a^2|\leq C \sum_{j=1}^{N}\frac{(\sqrt{|\log \e|}\e)^{\sigma}}{1+r_j^{2-\sigma}},\quad |\nabla R_a^2|\leq C \sum_{j=1}^{N}\frac{(\e\sqrt{|\log \e|})^\sigma}{1+r_j^{3-\sigma}},\quad \forall \,0<\sigma<1,\]
 in \( \bigcap_{j=1}^{N} \{ r_j >2\}\).
\end{lemma}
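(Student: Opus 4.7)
The plan is to work directly from the explicit expression \eqref{eq:expression_error} and split the analysis into the union of inner disks $\{r_m\le 3\}$ around each $\xi_m$, and the outer region $\bigcap_{j=1}^N\{r_j>2\}$. Throughout I will use the asymptotics $\rho(r)=1-\frac{1}{2r^2}+O(1/r^4)$, $\rho'(r)=\frac{1}{r^3}+O(1/r^5)$, $\rho(r)\sim r$ as $r\to 0$, together with the key geometric fact that any two centers satisfy $|\xi_i-\xi_j|\ge cd_\e$, so for every pair $i\ne j$ one has $\max(r_i,r_j)\ge cd_\e$.

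In the inner region near $\xi_m$, I would factor $V_d=w^m\cdot A_m$ with $A_m:=\prod_{i\ne m}w^i\prod_k \overline w^k$, a smooth factor which is $O(1)$ on $\{r_m\le 3\}$ because all other $r_i,r_k$ are of order $d_\e$. The ``magnitude'' piece of the bracket telescopes as
\[
-(1-|w^m|^2)+\bigl(1-|V_d|^2\bigr)=|w^m|^2\bigl(1-\textstyle\prod_{i\ne m}|w^i|^2\prod_k|\overline w^k|^2\bigr)=O(\e^2|\log\e|),
\]
and together with $-\sum_{i\ne m}(1-|w^i|^2)-\sum_k(1-|\overline w^k|^2)=O(\e^2|\log\e|)$ contributes a negligible term. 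For the gradient cross-products, pairs $(i,j)$ avoiding $m$ contribute $O(1/(r_ir_j))=O(\e^2|\log\e|)$; pairs with one index equal to $m$ give a factor $\rho'_m/\rho_m\sim 1/r_m$ or $1/(r_mr_i)$, which combined with the prefactor $V_d\sim r_m A_m$ produces a bound of order $1/r_i\sim\e\sqrt{|\log\e|}$. The stated inner bound, and its gradient version, then follow.

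In the outer region I decompose $R_a=R_a^1+iR_a^2$: since $E_a=iV_dR_a$, the real part $R_a^1$ comes from the imaginary parts of the bracket (the $\sin$ coefficients) and $R_a^2$ from minus the real parts. The $\sin$ coefficients are of the form $\frac{\rho'_i}{\rho_i r_j}-\frac{\rho'_j}{\rho_j r_i}=O(1/(r_i^3r_j)+1/(r_j^3r_i))$, so $R_a^1$ is a sum of terms $1/(r_i^3r_j)$. Using the separation, I would do a two-case analysis: if $r_i\ge cd_\e$, write $\frac{1}{r_i^3r_j}=\frac{1}{r_j^3}\cdot\frac{r_j^2}{r_i^3}$ and estimate by $\frac{\e\sqrt{|\log\e|}}{r_j^3}$ when $r_j\le d_\e$, while for $r_j>d_\e$ one has directly $1/r_j\le \e\sqrt{|\log\e|}$; if instead $r_j\ge cd_\e$, one bounds $1/r_j\le\e\sqrt{|\log\e|}$ right away. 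For $R_a^2$, the leading $1/r^2$-contributions of the magnitude block again telescope, and the dominant surviving terms are the $\cos$-coefficients $-\cos(\theta_i-\theta_j)/(r_ir_j)$; the same case analysis yields $\frac{1}{r_ir_j}\le C\e\sqrt{|\log\e|}/r_j$.

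For the interpolated bound, I write, when $r_j\le d_\e$ (the case where the $\sigma=1$ bound is weakest compared to $1/r_j^2$),
\[
\frac{1}{r_ir_j}=\frac{1}{r_j^{2-\sigma}}\cdot\frac{r_j^{1-\sigma}}{r_i}\le \frac{1}{r_j^{2-\sigma}}\cdot\frac{d_\e^{1-\sigma}}{cd_\e}=\frac{(\e\sqrt{|\log\e|})^{\sigma}}{c\,r_j^{2-\sigma}},
\]
while the region $r_j>d_\e$ is already handled by the $\sigma=1$ endpoint; this interpolates the two pointwise bounds. Gradient estimates in the outer region follow by direct differentiation of \eqref{eq:expression_error}, each differentiation costing one extra factor $1/r_j$, which is consistent with the stated exponents. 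The main obstacle is the bookkeeping: one must (i) recognize the cancellation in the magnitude block that rules out a contribution of the form $\sum_j 1/r_j^2$ to $R_a^2$, (ii) track the asymmetry between $\cos$- and $\sin$-coefficients that produces the faster decay of $R_a^1$, and (iii) systematically apply the vortex separation to convert pairwise estimates into the desired single-index sums.
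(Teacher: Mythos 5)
Your proposal is correct and follows essentially the same route as the paper: both arguments work directly from the expansion \eqref{eq:expression_error} and the asymptotics of $\rho$ in Lemma \ref{lem:propertiesofrho}, and both obtain the single-index bounds and the interpolated $\sigma$-estimate from the vortex separation $\max(r_i,r_j)\gtrsim d_\e=\hat d_\e/(\e\sqrt{|\log\e|})$ (the paper phrases this via the reduction to the sector $\Theta_1$, where $r_j\gtrsim d_\e$ for $j\neq 1$, rather than via your pairwise case analysis, but the mechanism is identical). Your write-up in fact supplies details the paper leaves implicit, such as the telescoping of the modulus terms and the factorization $V_d=w^m A_m$ needed for the gradient bound near each vortex.
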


\begin{proof}
Suppose $n^-=0$. By symmetry it suffices to work in the angular sector
\begin{equation}\label{eq:angular_sector}
\Theta_1:=\left\{z\in \mathbb{C}:\, z=re^{is},\; r>0,\; s\in \left[-\frac{\pi}{n^+},\frac{\pi}{n^+}\right]\right\},
\end{equation}
where we have 
$$r_j \geq d_\e \sin \frac{\pi}{n^+}\geq C \e \sqrt{|\log \e|}\quad \mbox{ for all }j\neq 1.$$
To estimate the error near the vortex \(\xi_1\) we use expression \eqref{Sa_0} and the fact that from Lemma \ref{lem:propertiesofrho} we know
$$\rho_j=1-\frac{1}{2r_j^2}+O(r^{-4}_j)\quad\mbox{ and }\quad \rho'_j=\frac{1}{r_j^3}+O(r_j^{-4})\quad \mbox{ for }j \neq 1.$$
Far away from the vortex \(\xi_1\), i.e., in \( \{r_1>2\}\cap \Theta_1\) we use \eqref{eq:expression_error} and Lemma \ref{lem:propertiesofrho}  again. Note that to estimate the imaginary part \(R_a^2\) the dominant terms are of the form \(1/(r_1r_j)\) for \(r_j \neq 1\). Since \(r_j>r_1\) we can say that
\[ \frac{1}{r_1r_j}\leq \frac{1}{r_1^{2-\sigma}r_j^{\sigma}}\leq \frac{(\e\sqrt{|\log \e|)^\sigma}}{r_1^{2-\sigma}}.\]
The estimates for the gradient follow in the same way.
The case $n^-=1$ analogously follows by dividing the space into the regions closer to every vortex.
\end{proof}

\subsection{Size of the error \(S_b(V_d)\)}
We first note that
\begin{align}\label{estpsV}
\frac{\p_s V_d}{V_d}& =\sum_{j=1}^{n^+} \frac{\p_s w_j}{w_j}+\sum_{k=1}^{n^-}\frac{\p_s \overline{w}_k}{\overline{w}_k}, \\
\frac{\p^2_{ss}V_d}{V_d} &= \sum_{j=1}^{n^+} \frac{\p^2_{ss}w_j}{w_j}+\sum_{k=1}^{n^-} \frac{\p^2_{ss} \overline{w}_k}{\overline{w}_k}+\sum_{j=1}^{n^+}\sum_{l\neq j} \frac{\p_s w_j \p_s w_l}{w_j w_l}  +\sum_{k=1}^{n^-} \sum_{l\neq k} \frac{\p_s \overline{w}_k \p_s\overline{w}_l}{\overline{w}_k \overline{w}_l}+2\sum_{j=1}^{n^+}\sum_{k=1}^{n^-} \frac{\p_s w_j \p_s \overline{w}_k}{w_j \overline{w}_k},
\end{align}
and
\begin{align*}
\p_s w_j &=(\p_s r_j \r'_j+i\r_j \p_s \theta_j)e^{i\theta_j}, \\
\p^2_{ss} w_j &= \left( \p_{ss}^2r_j\r_j'+(\p_s r_j)^2\r_j''+2i\p_sr_j\r_j'\p_s\theta_j+i\r_j\p^2_{ss}\theta_j-\r_j(\p_s\theta_j)^2 \right)e^{i\theta_j}.
\end{align*}
Thus we find, after reorganizing the terms:
\begin{equation*}\begin{split}
\frac{S_b(V_d)}{\e^2V_d} =&\,\sum_{j=1}^{n^+} \p_{ss}^2 r_j \frac{\r'_j}{\r_j}+(\p_sr_j)^2\frac{\r_j''}{\r_j}+\sum_{k=1}^{n^-}\p^2_{ss}r_k\frac{\r'_k}{\r_k}+(\p_sr_k)^2\frac{\r_k''}{\r_k} 
-\left( \sum_{j=1}^{n^+}\p_s\theta_j-\sum_{k=1}^{n^-} \p_s\theta_k \right)^2\\
&+2n\left(\sum_{j=1}^{n^+}\p_s\theta_j-\sum_{k=1}^{n^-} \p_s\theta_k \right)-n^2 
+\sum_{j=1}^{n^+} \sum_{l=j} \p_s r_l \p_s r_j \frac{\r_j' \r_l'}{\r_j \r_l}+\sum_{k=1}^{n^-}\sum_{l\neq k} \p_s r_k \p_sr_l \frac{\r'_k \r'_l}{\r_k \r_l}\\
&+\sum_{j=1}^{n^+}\sum_{k=1}^{n^-} \p_sr_j\p_sr_k \frac{\r_j' \r'_k}{\r_j\r_k} +i \left\{ \sum_{j=1}^{n^+} \p^2_{ss} \theta_j-\sum_{k=1}^{n^-} \p^2_{ss} \theta_k +\sum_{j=1}^{n^+}2\p_sr_j\frac{\r'_j}{\r_j}\left(\sum_{l=1}^{n^+}\p_s\theta_l-\sum_{k=1}^{n^-} \p_s\theta_k-n \right)\right. \\
&\left.+ \sum_{k=1}^{n^-}2\p_s r_k \frac{\r'_k}{\r_k} \left( \sum_{l=1}^{n^-}\p_s\theta_l-\sum_{j=1}^{n^+} \p_s\theta_j-n \right) \right\}.
\end{split}\end{equation*}
We compute the derivative with respect to the variables \((r,s)\) of \(r_j,\theta_j,r_k,\theta_k\). Note that
\begin{equation}\label{eq:polar_relations}
r_je^{i\theta_j}=re^{is}-\xi_j^+=(re^{i(s-\varphi_j)}-|\xi_j^+|)e^{i\varphi_j},
\end{equation}
and hence 
\begin{eqnarray}
r_j^2 &=& r^2-2r|\xi_j^+|\cos(s-\varphi_j)+|\xi_j^+|^2 \nonumber \\
r_j\cos (\theta_j-\varphi_j) &=& r\cos(s-\varphi_j)-|\xi_j^+| \nonumber \\
r_j \sin (\theta_j -\varphi_j) &=& r\sin(s-\varphi_j). \nonumber
\end{eqnarray}
With the help of these relations we arrive at 
\begin{equation} \begin{split}\label{eq:derivative_rj}
&\p_sr_j =|\xi_j^+|\sin(\theta_j-\varphi_j),\qquad 
\p^2_{ss}r_j =|\xi_j^+|\cos(\theta_j-\varphi_j)+\frac{|\xi_j^+|^2}{r_j}\cos^2(\theta_j-\varphi_j), \\
&\p_s \theta_j = 1+\frac{|\xi_j^+|}{r_j}\cos(\theta_j-\varphi_j),\qquad
\p^2_{ss} \theta_j =-\frac{|\xi_j^+|}{r_j}\sin(\theta_j-\varphi_j)-\frac{2|\xi_j^+|^2}{r_j^2}\sin(\theta_j-\varphi_j)\cos(\theta_j-\varphi_j).
\end{split}\end{equation}
Analogous expressions hold for $\xi_k^-$. Using the fact \(n=n^+-n^-\) we observe that
\begin{multline}
-\left( \sum_{j=1}^{n^+}\p_s\theta_j-\sum_{k=1}^{n^-} \p_s\theta_k \right)^2+2n\left(\sum_{j=1}^{n^+}\p_s\theta_j-\sum_{k=1}^{n^-} \p_s\theta_k \right)-n^2 \\=-\left( \sum_{j=1}^{n^+} \frac{|\xi_j^+|}{r_j}\cos(\theta_j-\varphi_j)-\sum_{k=1}^{n^-}\frac{|\xi_k^-|}{r_k}\cos(\theta_k-\varphi_k) \right)^2,
\end{multline}
\begin{equation*}\begin{split}
\sum_{j=1}^{n^+} \p^2_{ss} \theta_j-\sum_{k=1}^{n^-} \p^2_{ss}\theta_k =&\left( \sum_{k=1}^{n^-} \frac{|\xi_k^-|}{r_k}\sin(\theta_k-\varphi_k)-\sum_{j=1}^{n^+}\frac{|\xi_j^+|}{r_j}\sin(\theta_j-\varphi_j)\right) \\
&-2\sum_{j=1}^{n^+} \frac{|\xi_j^+|^2}{r_j^2}\sin(\theta_j-\varphi_j)\cos(\theta_j-\varphi_j)+2\sum_{k=1}^{n^-} \frac{|\xi_k^-|^2}{r_k^2}\sin(\theta_k-\varphi_k)\cos(\theta_k-\varphi_k),
\end{split}\end{equation*}
\begin{equation}
\sum_{l=1}^{n^+}\p_s\theta_l-\sum_{k=1}^{n^-} \p_s\theta_k-n=\sum_{l=1}^{n^+}\frac{|\xi_l^+|}{r_l}\cos(\theta_l-\varphi_l)-\sum_{k=1}^{n^-} \frac{|\xi_k^-|}{r_k}\cos(\theta_k-\varphi_k).
\end{equation}
We can now estimate the size of this part of the error, by taking 
$$|\xi_j^+|=d_\e=\frac{\d_\e}{\e|\log \e|^{1/2}}\quad \forall  \,1\leq j \leq n^+,\quad  \varphi_j=2i\pi (j-1)/n,\quad |\xi_k^{-}|=0,\quad\varphi_k=0.$$
We find
\begin{equation}\begin{split}\label{eq:error_S_b}
\frac{S_b(V_d)}{\e^2V_d}=& \frac{\d_\e}{\e |\log \e|^{1/2}}\sum_{j=1}^{n^+} \left(\frac{\r_j'}{\r_j}\cos(\theta_j-\varphi_j)-i\frac{\sin(\theta_j-\varphi_j)}{r_j} \right) \\ 
&+\frac{\d_\e^2}{\e^2|\log \e|} \Bigl[\sum_{j=1}^{n^+} \left( \frac{\r_j''}{\r_j}\sin^2(\theta_j-\varphi_j)+\left(\frac{\r'_j}{r_j\r_j}-\frac{1}{r_j^2} \right)\cos^2(\theta_j-\varphi_j) \right) \\
& \quad -2i \frac{1}{r_j^2} \sin(\theta_j-\varphi_j)\cos(\theta_j-\varphi_j)\Bigr] \\
&+\frac{\d_\e^2}{\e^2|\log \e|}\left[-2\sum_{j=1}^{n^+}\sum_{l\neq j} \frac{\cos(\theta_j-\varphi_j)\cos(\theta_l-\varphi_l)}{r_jr_l}+\sum_{j=1}^{n^+}\sum_{l\neq j} \frac{\sin(\theta_j-\varphi_j) \sin(\theta_l-\varphi_l)\r'_j\r'_l}{\r_j\r_l}\right. \\ 
&\qquad\qquad  \left.+2i\sum_{j=1}^{n^+}\sum_{l=1}^{n^+} \frac{\sin(\theta_j-\varphi_j)\cos(\theta_l-\varphi_l)\rho_j'}{{\rho_j}r_l} \right].
\end{split}\end{equation}

\begin{lemma}\label{errorB}
Let us denote $E_b:=S_b(V_d)$, with $S_b$ and $V_d$ defined in \eqref{S} and \eqref{eq:approx} respectively. There exists $C>0$ such that
\begin{equation*}
\|E_b\|_{L^\infty(r_j<3)}\leq \frac{C}{|\log \e|}, \qquad \|\nabla E_b\|_{L^\infty(r_j<3)} \leq \frac{C}{|\log \e|},\quad \mbox{ for }j=1,\ldots,N.
\end{equation*}
Furthermore, writing $E_b=iV_dR_b=iV_d(R_b^1+iR_b^2)$, in the region \( \cap_{j=1}^{N} \{ r_j >2\}\) we have
\begin{eqnarray*}
|R_b^1| \leq \frac{C}{|\log \e|} \sum_{j=1}^{N} \frac{1}{r_j^2},  \ \ |\nabla R_b^1|  \leq \frac{C}{|\log \e|}\sum_{j=1}^{N} \frac{1}{r_j^3},\\ 
|R_b^2|\leq \frac{C}{|\log \e|}\sum_{j=1}^{N} \frac{1}{r_j^2},  \ \ |\nabla R_b^2|  \leq \frac{C}{|\log \e|}\sum_{j=1}^{N} \frac{1}{r_j^3}.
\end{eqnarray*}

\end{lemma}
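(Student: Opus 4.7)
The plan follows the pattern of Lemma~\ref{errorA}: split the plane into the near-vortex zones $\{r_j<3\}$ and the far field $\bigcap_{j}\{r_j>2\}$.

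In $\{r_j<3\}$, rather than invoke the term-by-term expansion \eqref{eq:error_S_b} (several of whose individual terms are singular at $\xi_j$ while their total is smooth), I would work directly with $V_d$. Since each factor $w_l$ lies in $C^\infty(\mathbb{R}^2)$ with all Cartesian derivatives uniformly bounded, the same is true of $V_d$. The angular derivative $\p_s = -x_2\p_{x_1}+x_1\p_{x_2}$ introduces a factor $|z|$ per application, and in $\{r_j<3\}$ one has $|z|\leq |\xi_j^+|+3\leq C d_\e$, so $|\p_s V_d|\leq C d_\e$ and $|\p_{ss}^2 V_d|\leq C d_\e^2$, with one-derivative-higher analogues for the gradient. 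Combined with $|V_d|\leq C$ and $\e^2 d_\e^2 = \d_\e^2/|\log\e|$, this yields $\|E_b\|_{L^\infty(r_j<3)}+\|\nabla E_b\|_{L^\infty(r_j<3)}\leq C/|\log\e|$.

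In the far field, $|V_d|$ lies in a compact subinterval of $(0,\infty)$, so it suffices to estimate $E_b/V_d$, whose imaginary and minus-real parts are $R_b^1$ and $R_b^2$ respectively, read off \eqref{eq:error_S_b}. The dominant contributions come from the second and third sums (prefactor $\d_\e^2/|\log\e|$): using $\rho=1-(2r^2)^{-1}+O(r^{-4})$ and $\rho'=r^{-3}+O(r^{-4})$, the real single-sum term $(\rho_j'/(r_j\rho_j)-1/r_j^2)\cos^2$ is $O(1/r_j^2)$, the imaginary $\sin\cos/r_j^2$ is $O(1/r_j^2)$, and the double-sum cross term $\cos\cos/(r_j r_l)\leq\tfrac12(1/r_j^2+1/r_l^2)$ also respects the bound. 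The first sum has the smaller prefactor $\e\d_\e/\sqrt{|\log\e|}$, and its real part $(\rho_j'/\rho_j)\cos=O(1/r_j^3)$ is absorbed because $\e\d_\e\sqrt{|\log\e|}/r_j=o(1)$ for $r_j\geq 2$. Gradient estimates gain a factor $1/r_j$ per differentiation.

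The main obstacle is the imaginary first-sum contribution $\e^2 d_\e \sum_j \sin(\theta_j-\varphi_j)/r_j$, which decays only as $1/r_j$ termwise, insufficient for the $1/r_j^2$ target. The rescue is the discrete rotational symmetry of the configuration. For $|z|\gtrsim d_\e$, the expansion $\log(z-\xi_j^+)=\log z - d_\e e^{i(\varphi_j-s)}/r + O(d_\e^2/r^2)$ gives $\sin(\theta_j-\varphi_j)/r_j = \sin(s-\varphi_j)/r + O(d_\e/r^2)$; the identity $\sum_j\sin(s-\varphi_j)=0$ for $\varphi_j=2\pi(j-1)/n^+$ (with $n^+\geq 2$) kills the leading order, improving the sum to $O(d_\e/r^2)$, which after multiplication by $\e^2 d_\e$ matches the target envelope $\d_\e^2/(|\log\e|r^2)$. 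For intermediate $r_j\lesssim d_\e$ the termwise bound already suffices, so the two regimes together close the estimate. The setting of Theorem~\ref{th:main2} ($n^-=1$, $\xi_1^-=0$) is treated analogously, with the central vortex contributing symmetrically.
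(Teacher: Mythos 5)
Your proposal is correct and follows essentially the same route as the paper's proof: in both, the only delicate point is the slowly decaying imaginary term $\frac{\e \hat d_\e}{\sqrt{|\log\e|}}\sum_{j}\frac{\sin(\theta_j-\varphi_j)}{r_j}$, which is handled by the termwise bound $r_j^{-1}\leq C d_\e\, r_j^{-2}$ on the intermediate range $r_j\lesssim d_\e$ and by the roots-of-unity cancellation $\sum_{j}\sin(s-\varphi_j)=0$ beyond that scale, with all remaining terms of \eqref{eq:error_S_b} absorbed via Lemma \ref{lem:propertiesofrho}. Your near-vortex estimate via the smoothness of $V_d$ and the factor $|z|\leq Cd_\e$ per angular derivative, instead of reading the bound off \eqref{eq:error_S_b}, is only a cosmetic variation.
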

\begin{proof}
Applying the properties of $\rho_j$ stated in Lemma \ref{lem:propertiesofrho} at the identity \eqref{eq:error_S_b} it easily follows that  
\begin{equation*}
\|E_b\|_{L^\infty(r_j<3)}\leq \frac{C}{|\log \e|}, \qquad \|\nabla E_b\|_{L^\infty(r_j<3)} \leq \frac{C}{|\log \e|}.
\end{equation*}
Thanks to Lemma \ref{lem:propertiesofrho}, to prove the estimates far from the vortices the only difficult term is 
\begin{equation*}
A:=\frac{\e \hat{d}_\e}{|\log\e|^{1/2}}\sum_{j=1}^{n^+}\frac{\sin(\theta_j-\varphi_j)}{r_j}.
\end{equation*}
Notice that in the region $3<r_j<2/(\e|\log\e|^{1/2})$ we directly obtain 
$$|A|\leq \frac{C}{|\log\e|}\frac{1}{1+r_j^2}.$$
Finally, in the case $r_j>2/(\e|\log\e|^{1/2})$ we use the fact \(r\sin(s-\varphi_j)=r_j\sin(\theta_j-\varphi_j)\) to write
$$A=\frac{\e \hat{d}_\e}{|\log\e|^{1/2}}\sum_{j=1}^{n^+}\frac{r}{r_j^2}\sin(s-\varphi_j),$$
and the result follows by expanding
$$\frac{r}{r_j^2}=\frac{1}{r}-\frac{2}{r^2}\frac{\hat{d}_\e}{\e|\log\e|^{1/2}}\cos(s-\varphi_j)+O\left(\frac{d_\e^2}{r^3}\right),$$
and noticing that
\[ \sum_{j=1}^{n^+} \sin(s-\varphi_j)=0,\]
since \(\xi_j^+\) are the \(j\)-th root of the unity.
\end{proof}

\subsection{Size of the error \(S_c(V_d)\)}
Using the equality in \eqref{estpsV} we deduce
\begin{equation}
\frac{S_c(V_d)}{iV_dc \e^2|\log \e|}= \sum_{j=1}^{n^+}\frac{\p_s r_j \rho'_j}{\rho_j}+\sum_{k=1}^{n^-} \frac{\p_s r_k \rho_k'}{\rho_k}+i\left( \sum_{j=1}^{n^+}\p_s\theta_j -\sum_{k=1}^{n^-} \p_s\theta_k- n\right),
\end{equation}
and applying \eqref{eq:derivative_rj} we get
\begin{equation}\begin{split}\label{eq:errorSc}
\frac{S_c(V_d)}{iV_dc \e^2|\log \e|}=&\sum_{j=1}^{n^+} |\xi_j^+|\sin(\theta_j-\varphi_j)\frac{\rho_j'}{\rho_j}+\sum_{k=1}^{n^-} |\xi_k^-|\sin(\theta_k-\varphi_k)\frac{\rho_k'}{\rho_k}\\
&+i\left( \sum_{j=1}^{n^+} \frac{|\xi_j^+|\cos(\theta_j-\varphi_j)}{r_j}-\sum_{k=1}^{n^-} \frac{|\xi_k^-|\cos(\theta_k-\varphi_k)}{r_k} \right).
\end{split}\end{equation}
\begin{lemma}\label{errorC}
Let us denote $E_c:=S_c(V_d)$, with $S_c$ and $V_d$ defined in \eqref{S} and \eqref{eq:approx} respectively. There exists $C>0$ such that
\begin{equation*}
\|E_c\|_{L^\infty(r_j<3)}\leq C\e \sqrt{|\log\e|}, \qquad \|\nabla E_c\|_{L^\infty(r_j<3)} \leq C\e \sqrt{|\log \e|},\quad \mbox{ for every }j=1,\ldots, N.
\end{equation*}
Furthermore, writing $E_c=iV_dR_c=iV_d(R_c^1+iR_c^2)$, in the region \(\cap_{j=1}^{N} \{r_j>2\}\) we have
\begin{eqnarray}\label{estSinAlpha}
|R_c^1|\leq C \e \sqrt{|\log\e|} \sum_{j=1}^{N}\frac{1}{r_j^3}, \quad |\nabla R_c^1|\leq C \e \sqrt{|\log\e|} \sum_{j=1}^{N}\frac{1}{r_j^4}, \\
 |R_c^2|\leq C \e\sqrt{|\log\e|} \sum_{j=1}^{N}\frac{1}{r_j}, \quad |\nabla R_c^2|\leq C \e\sqrt{|\log\e|} \sum_{j=1}^{N}\frac{1}{r_j^2}.
\end{eqnarray}
We also have
\begin{equation}\label{estConAlpha}
|R_c^2|\leq C(\e\sqrt{|\log \e|})^\sigma \sum_{j=1}^{N}\frac{1}{r_j^{2-\sigma}},\quad |\nabla R_c^2|\leq C(\e\sqrt{|\log \e|})^\sigma\sum_{j=1}^{N}\frac{1}{r_j^{3-\sigma}}
\end{equation}
for every \(0<\sigma<1\).
\end{lemma}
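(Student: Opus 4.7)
The proof will parallel the structure of Lemmas \ref{errorA} and \ref{errorB}, starting from the explicit formula \eqref{eq:errorSc}. First, observe that in both theorem cases the contributions from the inner vortices vanish identically: either $n^-=0$, or $n^-=1$ with $|\xi_1^-|=0$, so each term in \eqref{eq:errorSc} involving $|\xi_k^-|$ disappears. Thus
\[
R_c \;=\; c\,\varepsilon^2|\log\varepsilon|\,\Bigl\{\sum_{j=1}^{n^+} d_\varepsilon\sin(\theta_j-\varphi_j)\,\frac{\rho_j'}{\rho_j}\;+\;i\sum_{j=1}^{n^+}\frac{d_\varepsilon\cos(\theta_j-\varphi_j)}{r_j}\Bigr\},
\]
and the overall prefactor is $c\,\hat d_\varepsilon\,\varepsilon\sqrt{|\log\varepsilon|}$ since $d_\varepsilon=\hat d_\varepsilon/(\varepsilon\sqrt{|\log\varepsilon|})$.

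\textbf{Estimates near a vortex and the naive far-field bounds.} By the polygonal symmetry we may assume we are in the angular sector $\Theta_1$ defined in \eqref{eq:angular_sector}, where for $j\neq 1$ one has $r_j\geq d_\varepsilon \sin(\pi/n^+)$. In the region $r_1<3$ the properties of $\rho$ from Lemma \ref{lem:propertiesofrho} give $|\rho_j'/\rho_j|\leq C$ and $1/r_j\leq C$ uniformly, which immediately yields $\|E_c\|_{L^\infty(r_j<3)}+\|\nabla E_c\|_{L^\infty(r_j<3)}\leq C\varepsilon\sqrt{|\log\varepsilon|}$. In the region $\cap_{j}\{r_j>2\}$, Lemma \ref{lem:propertiesofrho} provides $\rho_j'=r_j^{-3}+O(r_j^{-4})$ and $\rho_j\to 1$, so the real part $R_c^1$ decays like $\sum_j r_j^{-3}$ while the imaginary part $R_c^2$ only decays like $\sum_j r_j^{-1}$, proving \eqref{estSinAlpha}. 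The gradient bounds lose one extra power of $r_j$ via the explicit derivatives \eqref{eq:derivative_rj} and the corresponding asymptotics of $\rho_j''$.

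\textbf{Improved bound via the polygonal cancellation.} The sharper estimate \eqref{estConAlpha} is the only delicate point. Using $r_j\cos(\theta_j-\varphi_j)=r\cos(s-\varphi_j)-d_\varepsilon$ from the polar relations \eqref{eq:polar_relations}, one rewrites
\[
\sum_{j=1}^{n^+}\frac{d_\varepsilon\cos(\theta_j-\varphi_j)}{r_j}\;=\;\sum_{j=1}^{n^+}\frac{d_\varepsilon\bigl(r\cos(s-\varphi_j)-d_\varepsilon\bigr)}{r_j^2}.
\]
When $r\gg d_\varepsilon$, we Taylor expand $r_j^{-2}=r^{-2}\bigl(1+2d_\varepsilon\cos(s-\varphi_j)/r+O(d_\varepsilon^2/r^2)\bigr)$ and sum in $j$. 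Since $\varphi_j=2\pi(j-1)/n^+$ are roots of unity, $\sum_{j=1}^{n^+}\cos(s-\varphi_j)=0$ (recall $n^+\geq 2$), so the leading $O(d_\varepsilon/r)$ term cancels, leaving $-n^+d_\varepsilon^2/r^2+O(d_\varepsilon^3/r^3)$. After multiplying by $c\varepsilon^2|\log\varepsilon|$ this yields $|R_c^2|\leq C/r^2\lesssim C\sum_j r_j^{-2}$ in the region $r_j>2/(\varepsilon\sqrt{|\log\varepsilon|})$, and in particular $|R_c^2|\leq C(\varepsilon\sqrt{|\log\varepsilon|})^\sigma\sum_j r_j^{\sigma-2}$ there.

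\textbf{Interpolation and conclusion.} For the complementary range $2<r_j\leq 2/(\varepsilon\sqrt{|\log\varepsilon|})$ we have $\varepsilon\sqrt{|\log\varepsilon|}\leq 2/r_j$, hence
\[
\frac{C\varepsilon\sqrt{|\log\varepsilon|}}{r_j}\;=\;\frac{C(\varepsilon\sqrt{|\log\varepsilon|})^\sigma(\varepsilon\sqrt{|\log\varepsilon|})^{1-\sigma}}{r_j}\;\leq\;\frac{C\,2^{1-\sigma}(\varepsilon\sqrt{|\log\varepsilon|})^\sigma}{r_j^{2-\sigma}},
\]
which matches \eqref{estConAlpha} in this regime. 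Piecing the two regions together gives the bound on $|R_c^2|$. The gradient bound $|\nabla R_c^2|\lesssim (\varepsilon\sqrt{|\log\varepsilon|})^\sigma\sum_j r_j^{\sigma-3}$ is obtained by applying $\partial_r$ and $\partial_s$ to the same expansion: the cancellation of $\sum_j\cos(s-\varphi_j)$ survives differentiation thanks to the identities \eqref{eq:derivative_rj}, and a further power of $r_j$ is lost in the decay. The main obstacle is precisely isolating this polygonal cancellation uniformly in the annular region and combining it with the interpolation splitting—the rest is bookkeeping identical to that carried out in Lemmas \ref{errorA}--\ref{errorB}.
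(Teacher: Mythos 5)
Your proposal takes essentially the same route as the paper's proof: the near-vortex bounds and \eqref{estSinAlpha} are read off directly from \eqref{eq:errorSc} and Lemma \ref{lem:propertiesofrho}, while \eqref{estConAlpha} is obtained by splitting at $r_j=2/(\e\sqrt{|\log\e|})$, interpolating $\e\sqrt{|\log\e|}\le 2/r_j$ in the inner range, and exploiting the root-of-unity cancellation $\sum_{j=1}^{n^+}\cos(s-\varphi_j)=0$ in the outer range exactly as in Lemma \ref{errorB}. One small correction to your near-vortex step: on $\{r_1<3\}$ the quantities $\rho_1'/\rho_1$ and $1/r_1$ are \emph{not} uniformly bounded (both blow up like $1/r_1$ at the vortex); the $L^\infty$ bound on $E_c=iV_dR_c$ nevertheless holds because the factor $w(z-\xi_1^+)$ contained in $V_d$ vanishes like $r_1$ there and absorbs this singularity.
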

\begin{proof}
The estimates for $E_c$ near the vortices and \eqref{estSinAlpha} follow straightforward from \eqref{eq:errorSc} and Lemma \ref{lem:propertiesofrho}. To see \eqref{estConAlpha} we divide the analysis into two regions. Assume $r_j=\min\{r_l:\,l=1,\ldots,n^+\}$. If $2<r_j\leq 2/(\e|\log\e|^{1/2})$, from \eqref{eq:errorSc} we obtain
$$|R_c^2|\leq C\frac{(\e|\log\e|^{1/2})^\sigma}{r_j^{2-\sigma}},\quad \mbox{ for every }0<\sigma<1.$$
If $r_j \geq  2/(\e|\log\e|^{1/2})$, using that $r\cos(s-\varphi_j)=r_j\cos(\theta_j-\varphi_j)$ and proceeding as in the proof of Lemma \ref{errorB} we conclude
$$|R_c^2|\leq \frac{C}{r^2}\leq C\frac{(\e|\log\e|^{1/2})^\sigma}{r_j^{2-\sigma}},\quad \mbox{ for every }0<\sigma<1.$$
The estimate for the gradient follows analogously.
\end{proof}

We define 
\begin{align}
\label{Reps}
R_\varepsilon := \frac{\alpha_0}{\varepsilon |\log \varepsilon|},
\end{align}
with \(\alpha_0>0\) a constant to be determined later. Note that \(|\log R_\e| =|\log \e|(1+o_\e(1))\) and \(R_\e\ll d_\e\). We also define 
 the norm
\begin{align}
\|h\|_{**}
&:=\sum_{j=1}^N \| V_d h\|_{C^\alpha(r_j<3)} \nonumber
\\
& \qquad + \sup_{r_j>2,\,1\leq j\leq N} 
\left[| \RE(h)|\left(\sum_{j=1}^N r_j^{-2}+\e^2\right)^{-1}+|\IM(h)|\left(\sum_{j=1}^N r_j^{-2+\sigma}+\varepsilon^{\sigma-2}\right)^{-1}\right]\nonumber
\\
& \qquad 
+ \sup_{2<|z-\xi_j|<2R_\varepsilon , \, 1\leq j\leq N}[ \RE(h)]_{\alpha,B_{|z|/2}(z)}\left(\sum_{j=1}^N |z-\xi_j|^{-2-\alpha}\right)^{-1} \nonumber
\\
& \qquad 
+ \sup_{2<|z-\xi_j|<2R_\varepsilon , \, 1\leq j\leq N}[ \IM(h)]_{\alpha,B_{1}(z)}\left(\sum_{j=1}^N |z-\xi_j|^{-2+\sigma}\right)^{-1}, \label{def:norm_**}
\end{align}
where \(r_j=|z-\xi_j|\), $\|f\|_{C^\alpha(D)} = \|f\|_{C^{0,\alpha}(D)}$, and 
\begin{align}
[f]_{\alpha,D}
& := \sup_{x,y\in D , \, x\not=y}\frac{|f(x)-f(y)|}{|x-y|^\alpha}, \label{def:Holder_norm}
\\
\|f\|_{C^{k,\alpha}(D)} 
&:=
\sum_{j=0}^k \|D^jf\|_{L^\infty	(D)} \label{def:Ckalpha_norm}
+ [D^k f]_{\alpha,D} .
\end{align}
This norm will be the appropriate setting in the right hand side of the problem in order to prove the invertibility result stated in Proposition \ref{prop:linearfull}. Its precise form is determined by the decay of the error terms, as we identified in Lemmas \ref{errorA}, \ref{errorB} and \ref{errorC}.
Putting these together we can summarize the size of the error of the approximation measured in this norm in the following result.
\begin{proposition}\label{globalError}
Consider $R$ defined as \(S(V_d)=iV_dR\), with $S$ and $V_d$ given by  \eqref{S} and  \eqref{eq:approx} respectively. Then,
\begin{equation*}
\|R\|_{**} \leq \frac{C}{|\log \e|}.
\end{equation*}
\end{proposition}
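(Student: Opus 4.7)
The plan is to combine Lemmas \ref{errorA}, \ref{errorB} and \ref{errorC} by writing $R = R_a + R_b + R_c$, where $iV_d R_\star = S_\star(V_d)$ for $\star \in \{a,b,c\}$, and then to verify separately each of the four terms in the definition \eqref{def:norm_**} of $\|\cdot\|_{**}$. Throughout I use that $\varepsilon\sqrt{|\log\varepsilon|} \ll 1/|\log\varepsilon|$ as $\varepsilon \to 0$.

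\textbf{Step 1 (near the vortices, $r_j<3$).} Here the required control is the $C^{0,\alpha}$ norm of $V_d R = -iE$. The three lemmas give the $L^\infty$ bound and the $L^\infty$ bound for the gradient of each $E_\star$ on $\{r_j<3\}$. The dominant contribution is $E_b$, of size $1/|\log\varepsilon|$, since the two other terms are of order $\varepsilon\sqrt{|\log\varepsilon|}$. The Hölder seminorm $[E]_{\alpha,\{r_j<3\}}$ follows by the elementary interpolation
\[
[f]_{\alpha,D}\leq 2\|f\|_{L^\infty(D)}^{1-\alpha}\|\nabla f\|_{L^\infty(D)}^\alpha,
\]
so the $C^\alpha$ part of $\|\cdot\|_{**}$ is controlled by $C/|\log\varepsilon|$.

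\textbf{Step 2 (pointwise bounds far from the vortices).} For the real parts, Lemmas \ref{errorA} and \ref{errorC} give $|R_a^1|+|R_c^1|\leq C\varepsilon\sqrt{|\log\varepsilon|}\sum_j r_j^{-3}$, while Lemma \ref{errorB} gives $|R_b^1|\leq (C/|\log\varepsilon|)\sum_j r_j^{-2}$. Since $\varepsilon\sqrt{|\log\varepsilon|}\,r_j^{-1}\leq C/|\log\varepsilon|$ for $r_j\geq 2$, the first two contributions are absorbed in the second. For $r_j$ bigger than $R_\varepsilon$ the term $\varepsilon^2$ in the weight takes over. For the imaginary parts, a direct application of the lemmas only yields the $1/r_j$ decay, which is insufficient; the key point is to use the interpolated estimates of Lemmas \ref{errorA} and \ref{errorC},
\[
|R_a^2|+|R_c^2|\leq C(\varepsilon\sqrt{|\log\varepsilon|})^\sigma\sum_{j=1}^N \frac{1}{1+r_j^{2-\sigma}},
\]
and to fix $\sigma\in(0,1)$ once and for all, independent of $\varepsilon$, small enough that $(\varepsilon\sqrt{|\log\varepsilon|})^\sigma |\log\varepsilon| \to 0$ as $\varepsilon\to 0$. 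Combined with $|R_b^2|\leq (C/|\log\varepsilon|)\sum_j r_j^{-2}\leq (C/|\log\varepsilon|)\sum_j r_j^{-2+\sigma}$ for $r_j\geq 1$, this yields the required bound by $(C/|\log\varepsilon|)(\sum_j r_j^{-2+\sigma}+\varepsilon^{\sigma-2})$; the $\varepsilon^{\sigma-2}$ weight dominates when all $r_j\gtrsim \varepsilon^{-1}$.

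\textbf{Step 3 (Hölder seminorms in the intermediate annulus $2<|z-\xi_j|<2R_\varepsilon$).} Here I use the gradient bounds from the same three lemmas together with the mean value inequality $[f]_{\alpha,B}\leq (\mathrm{diam}\, B)^{1-\alpha}\|\nabla f\|_{L^\infty(B)}$ applied on the ball $B=B_{|z|/2}(z)$ for $\RE(R)$ and on $B=B_1(z)$ for $\IM(R)$. For $\RE(R)$, in $B_{|z|/2}(z)$ each $r_k$ is comparable to $|z-\xi_k|$, so $\|\nabla R^1\|_{L^\infty(B_{|z|/2}(z))}\leq (C/|\log\varepsilon|)\sum_k|z-\xi_k|^{-3}$; multiplying by $(|z|/2)^{1-\alpha}$ and using $|z|\leq C\max_k|z-\xi_k|$ one obtains the desired weight $\sum_k|z-\xi_k|^{-2-\alpha}$. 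For $\IM(R)$, the gradient bound $(C/|\log\varepsilon|)\sum_k|z-\xi_k|^{-3+\sigma}$ (obtained from the interpolated gradient estimates in Lemmas \ref{errorA} and \ref{errorC}, and Lemma \ref{errorB}) on the fixed-size ball $B_1(z)$ directly dominates $\sum_k|z-\xi_k|^{-2+\sigma}$.

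\textbf{Main obstacle.} The only genuinely delicate point is the imaginary-part bound in Step 2: the naive $1/r_j$ decay from \eqref{estSinAlpha} is too slow to fit the target weight $r_j^{-2+\sigma}$, and it is essential to exploit the cancellation encoded in the interpolated estimate \eqref{estConAlpha} (and its analogue for $R_a^2$ in Lemma \ref{errorA}) together with a carefully chosen, $\varepsilon$-independent $\sigma\in(0,1)$. Once this is handled, all remaining estimates reduce to comparing the prefactors $\varepsilon\sqrt{|\log\varepsilon|}$ and $1/|\log\varepsilon|$ and summing geometrically over the $N$ vortices.
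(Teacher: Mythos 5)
Your proposal is correct and follows essentially the same route as the paper, which proves Proposition \ref{globalError} simply by invoking Lemmas \ref{errorA}, \ref{errorB} and \ref{errorC}; your elaboration correctly identifies the only delicate point, namely that the naive $r_j^{-1}$ decay of the imaginary parts must be replaced by the interpolated estimates \eqref{estConAlpha} (and the analogous bound in Lemma \ref{errorA}) with a fixed $\sigma\in(0,1)$, for which $(\e\sqrt{|\log\e|})^{\sigma}|\log\e|\to 0$. The remaining verifications (absorption of the $\e\sqrt{|\log\e|}$ contributions into $C/|\log\e|$, and the H\"older seminorms via the gradient bounds) are exactly the routine bookkeeping the paper leaves implicit.
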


\subsection{Decomposition of the error}

Recall the notation in polar coordinates $z-\xi_j = r_j e^{i\theta_j}$, with $\xi_j$ defined in \eqref{xij}.
We  can decompose a function \(h\) satisfying \(h(\overline{z})=-\overline{h}(z)\) in Fourier series in $\theta_j$ as
\begin{align}
& h = \sum_{k=0}^\infty h^{k,j}, \label{def:h_Fourier}\\
& h^{k,j}(r_j,\theta_j) = h_1^{k,j}(r_j) \sin (k\theta_j) + i h_2^{k,j}(r_j) \cos(k\theta_j), \quad
h_1^{k,j}(r_j), h_2^{k,j}(r_j) \in \R , \nonumber
\end{align}
and define
\begin{align*}
h^{e,j} = \sum_{k \text{ even}} h^{k,j}, \quad
h^{o,j} = \sum_{k \text{ odd}} h^{k,j}.
\end{align*}
Let $\mathcal{R}_j$ denote the reflection across the line $\RE(z) =\RE(\xi_j)$.  We have
\begin{equation}\label{def:reflection_R_j}
\mathcal{R}_j z = 2\RE(\xi_j)-\RE(z) + i \IM(z).
\end{equation}

\noindent Then $h^{e,j}$ and $h^{o,j}$ have the symmetries
\begin{align}
\nonumber
h^{o,j} (\mathcal{R}_j z ) =  \overline{h^{o,j}(z)} ,\quad \quad  h^{e,j}(\mathcal{R}_j z ) = -\overline{h^{e,j}(z)},
\end{align}
and we can define equivalently
\begin{align}
\label{hoj}
h^{o,j}(z) &= \frac{1}{2}[ h(z) + \overline{h(\mathcal{R}_j z )}] , \quad
h^{e,j}(z) = \frac{1}{2}[ h(z) - \overline{h(\mathcal{R}_j z )}] .
\end{align}
It is convenient to consider a global function  $h^o$ defined as follows: we introduce cut-off functions $\eta_{j,R}$, as
\begin{align}
\label{def:etajR}
\eta_{j,R}(z) := \eta_1 \Bigl(\frac{|z-\xi_j|}{R}\Bigr), 
\end{align}
where $\eta_1:\R\to[0,1]$ is a smooth function such that $\eta_1(t) = 1$ for $t \leq 1$ and $\eta_1(t)=0$ for $t\geq 2$. Consider $R_\e$ given in \eqref{Reps}, and $\alpha_0>0$ a small fixed constant so that $R_\varepsilon \leq \frac{1}{2}d_\e$. For any $h:\mathbb{C}\to \mathbb{C}$ we define
\begin{align}
\label{def-ho}
h^o &:= \sum_{j=1}^N \eta_{j,R_\varepsilon} h^{o,j},\qquad
h^e :=h-h^o. 
\end{align}
We introduce the new semi-norm
\begin{align}
|h|_{\sharp\sharp}
:= \sum_{j=1}^N \|  V_d h \|_{C^{0,\alpha}(r_j<4)}+\sup_{2<r_j<R_\e, \,1\leq j\leq N}\left[|\RE(h)|\left(\sum_{j=1}^N r_j^{-1}\right)^{-1}+|\IM(h)|\left(\sum_{j=1}^N r_j^{-1+\sigma}\right)^{-1}\right], \label{def:semi_norm_sharpsharp}
\end{align}
with $0<\alpha,\sigma<1$ constant to be chosen later. This semi-norm is devoted to identify some elements of the error with less decay but smaller size than the general term measured in the norm $\|\cdot\|$. This observation will allow us to obtained a more refined a priori estimate (see Proposition \ref{prop:sharp2b}), which will be a key point in the fixed point argument performed in Proposition \ref{prop:nonlinear}.

\begin{lemma}\label{lemma:decomposition}
Let $V_d$ given by \eqref{eq:approx} and denote $S(V_d)=E=iV_d R.$ Then we can write
$$R=R^o+R^e, \quad R^o=\hat{R}^o+\tilde{R}^o,$$
with \(R^o\) defined as in \eqref{def-ho} and
\(R^o(\mathcal{R}_jz) =\overline{R^o (z)} \) in \(\bigcup_{j=1}^{n^+}B_{R_\e}(\xi_j^+)\),
\begin{align*}
|\hat{R}^o|_{\sharp\sharp} & \leq C \frac{\e}{\sqrt{|\log \e|}}, \quad \|\tilde{R}^o\|_{**} \leq C\e\sqrt{|\log \e|}, \quad \|R^e\|_{**}+\|R^o\|_{**}\leq \frac{C}{|\log \e|}
\end{align*}
\end{lemma}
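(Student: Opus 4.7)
The plan is to build the decomposition by explicit projection and then verify the bounds piece by piece, using the explicit expansions of Section \ref{V} to identify which terms in the error are odd or even in the local Fourier sense. First, for each $j\in\{1,\dots,n^+\}$ I introduce the local projection
\[
R^{o,j}(z):=\tfrac12\bigl(R(z)+\overline{R(\mathcal R_j z)}\bigr),
\]
which by construction satisfies $R^{o,j}(\mathcal R_j z)=\overline{R^{o,j}(z)}$ and, via \eqref{def:h_Fourier}, is precisely the sum of odd-$k$ Fourier modes around $\xi_j$. Since $R_\varepsilon\leq d_\varepsilon/2$, the balls $B_{2R_\varepsilon}(\xi_j^+)$ are pairwise disjoint, so the glued function
\[
R^o:=\sum_{j=1}^{n^+}\eta_{j,R_\varepsilon}R^{o,j}
\]
coincides with $R^{o,j}$ in $B_{R_\varepsilon}(\xi_j^+)$, yielding the symmetry stated in the lemma for free, and I set $R^e:=R-R^o$.

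The inequality $\|R^o\|_{**}+\|R^e\|_{**}\leq C/|\log\varepsilon|$ is inherited from Proposition \ref{globalError}: $\mathcal R_j$ is an isometry that fixes $\xi_j$, so $\|\overline{R\circ\mathcal R_j}\|_{**}=\|R\|_{**}$, and multiplication by $\eta_{j,R_\varepsilon}$ is harmless because $|D^k\eta_{j,R_\varepsilon}|\lesssim R_\varepsilon^{-k}$ is absorbed by the weights in \eqref{def:norm_**}.

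The heart of the argument is the refined splitting $R^o=\hat R^o+\tilde R^o$. Reading off the explicit formulas, I single out as $\hat R^o$ the contribution coming from the first line of \eqref{eq:error_S_b}, i.e.
\[
\hat R^o(z):=-\frac{\varepsilon\hat d_\varepsilon}{\sqrt{|\log\varepsilon|}}\sum_{j=1}^{n^+}\eta_{j,R_\varepsilon}\Bigl[\frac{\sin(\theta_j-\varphi_j)}{r_j}+i\frac{\rho_j'(r_j)}{\rho_j(r_j)}\cos(\theta_j-\varphi_j)\Bigr].
\]
Near $\xi_j$ the $j$-th summand is a pure $k=1$ Fourier mode, hence automatically odd, so $\hat R^o$ inherits the $\mathcal R_j$-odd symmetry. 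To verify $|\hat R^o|_{\sharp\sharp}\leq C\varepsilon/\sqrt{|\log\varepsilon|}$ I invoke the pointwise estimates on $\rho_j,\rho_j'$ from the (classical) Lemma \ref{lem:propertiesofrho}: the real part decays as $1/r_j$ (exactly the weight allowed by the first sup in \eqref{def:semi_norm_sharpsharp}), the imaginary part as $\min(1,r_j^{-3})$ (much better than the required $r_j^{-1+\sigma}$), and the Hölder norms of $V_d\hat R^o$ near the vortices are bounded by direct inspection.

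The bound on $\tilde R^o$ requires showing that the residual odd-mode content of $R$, after subtracting $\hat R^o$, is only $O(\varepsilon\sqrt{|\log\varepsilon|})$ in the $\|\cdot\|_{**}$-norm. The key observation is that the dominant $O(1/|\log\varepsilon|)$ terms of $R$ are purely even: the second line of \eqref{eq:error_S_b} involves only $\sin^2(\theta_j-\varphi_j)$, $\cos^2(\theta_j-\varphi_j)$ and $\sin(\theta_j-\varphi_j)\cos(\theta_j-\varphi_j)$, which decompose into $k=0,2$ modes, all even under $\mathcal R_j$. Once this is established, the remaining odd-mode contributions to $\tilde R^o$ come from: (a) cross-vortex pieces $(j\neq l)$ in \eqref{eq:expression_error}, which in the sector $\Theta_1$ satisfy $r_l\geq C d_\varepsilon\sim (\varepsilon\sqrt{|\log\varepsilon|})^{-1}$ and thus contribute at most $O(\varepsilon\sqrt{|\log\varepsilon|}\cdot r_j^{-2})$ after Taylor expansion; (b) the full $E_c$ from \eqref{eq:errorSc}, which already has total size $\varepsilon\sqrt{|\log\varepsilon|}$ by Lemma \ref{errorC}; (c) higher-order pieces from the expansion of $\theta_l,r_l$ around $\xi_j$ in the second line of \eqref{eq:error_S_b}. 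Each of these is estimated directly using Lemmas \ref{errorA}--\ref{errorC}, always projected onto the $\mathcal R_j$-odd part which never enlarges the norm.

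The main obstacle is the cancellation underlying the splitting in the previous paragraph: one has to verify that the $O(1/|\log\varepsilon|)$ part of the error is genuinely $\mathcal R_j$-even (up to the explicit $\hat R^o$), so that the odd part of $R$ has the smaller size $\varepsilon/\sqrt{|\log\varepsilon|}$. This is a parity bookkeeping argument in the local Fourier modes at each vortex, using the global symmetries \eqref{eq:eqsymmetriesofpsi} of $V_d$, the $2\pi/n^+$-rotational symmetry, and the explicit trigonometric structure of \eqref{eq:expression_error}--\eqref{eq:errorSc}; the cross-vortex Taylor expansion around $\xi_1$ is the technical point where one must be most careful to extract the right order in $\varepsilon$.
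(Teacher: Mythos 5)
Your overall architecture coincides with the paper's: $R^o$ is obtained by gluing the local odd projections with the cut-offs $\eta_{j,R_\e}$, the bound $\|R^o\|_{**}+\|R^e\|_{**}\leq C/|\log\e|$ is read off directly from Proposition~\ref{globalError}, and $\hat R^o$ is an explicitly chosen leading piece whose remainder $\tilde R^o:=R^o-\hat R^o$ is to be estimated in $\|\cdot\|_{**}$. The gap is in your choice of $\hat R^o$, which is strictly smaller than the paper's. The paper builds $\hat R^o$ from
\begin{equation*}
R_j=\frac{-\d_\e\e}{|\log\e|^{1/2}}\sum_{l=1}^{n^+}\frac{\sin(\theta_l-\varphi_l)}{r_l}
-\frac{2\d_\e^2}{|\log\e|}\sum_{l\neq j}\frac{\sin(\theta_l-\varphi_l)\cos(\theta_l-\varphi_l)}{r_l^2}
-\frac{i\d_\e^2}{|\log\e|}\Bigl\{\sum_{l\neq j}\frac{\cos^2(\theta_l-\varphi_l)}{r_l^2}+\sum_{l=1}^{n^+}\sum_{k\neq j}\frac{\cos(\theta_l-\varphi_l)\cos(\theta_k-\varphi_k)}{r_lr_k}\Bigr\},
\end{equation*}
i.e.\ besides the first line of \eqref{eq:error_S_b} it keeps \emph{all} the cross-vortex quadratic contributions of $S_b$, and it keeps the full sum over $l$ in the first line rather than only the diagonal term $l=j$. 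You keep only the diagonal piece of the first line.

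This is not cosmetic. Consider the double-sum term with $l=j$, $k\neq j$: near $\xi_j$ it equals $\frac{\d_\e^2}{|\log\e|}\frac{\cos(\theta_j-\varphi_j)}{r_j}\cdot\frac{\cos(\theta_k-\varphi_k)}{r_k}$, and the second factor is essentially a nonzero constant of size $1/d_\e$. Hence this is a pure $k=1$ (odd) mode of size $\frac{\e\d_\e}{\sqrt{|\log\e|}}\,r_j^{-1}$ in the imaginary part. If it is left in $\tilde R^o$, its contribution to $\|\tilde R^o\|_{**}$ is $\sup_{2<r_j<2R_\e}\frac{\e\d_\e}{\sqrt{|\log\e|}}\,r_j^{1-\sigma}\sim\e^{\sigma}|\log\e|^{\sigma-3/2}$, which is far larger than the required $C\e\sqrt{|\log\e|}$ for every $\sigma<1$. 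A similar failure occurs for the $l\neq j$ terms of the first line: their $k=1$ Taylor corrections are $O(\e^{3}\sqrt{|\log\e|}\,r_j)$ in the real part, and $\sup r_j^{2}\cdot(\cdot)\sim|\log\e|^{-5/2}\gg\e\sqrt{|\log\e|}$. Two of your assertions are the source of the problem: (i) the claim that the dominant $O(1/|\log\e|)$ part of the error is purely even is true only for the diagonal ($l=j$) terms of \eqref{eq:error_S_b} — the off-diagonal terms contain genuine odd content; (ii) in your item (a) you estimate the cross-vortex pieces as $O(\e\sqrt{|\log\e|}\,r_j^{-2})$ ``after Taylor expansion'', but the leading odd component of $1/(r_jr_k)$, $k\neq j$, is $O(d_\e^{-1}r_j^{-1})$ — one power of $r_j$ worse than you claim. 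That $1/r_j$ decay is exactly what the weaker weight in $|\cdot|_{\sharp\sharp}$ is designed to absorb, and it is why these cross terms must be placed in $\hat R^o$ (where they are harmless: with the weights $r_j$ and $r_j^{1-\sigma}$ of \eqref{def:semi_norm_sharpsharp} they contribute $O(\e/\sqrt{|\log\e|})$). As written, your decomposition does not satisfy $\|\tilde R^o\|_{**}\leq C\e\sqrt{|\log\e|}$; you need to enlarge $\hat R^o$ to include these terms and then redo the $\sharp\sharp$ verification.
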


\begin{proof}
From Proposition \ref{globalError} we immediately obtain 
$$\|R^e\|_{**}+\|R^o\|_{**}\leq \frac{C}{|\log \e|}.$$
For every $j\in\{1,\ldots, n^+\}$ we define
\begin{equation*}\begin{split}
R_j:=&\,\frac{-\d_\e\e}{|\log\e|^{1/2}}\sum_{l=1}^{n^+}\frac{\sin(\theta_l-\varphi_l)}{r_l}-\frac{2\d_\e^2}{|\log\e|}\sum_{l\neq j}\frac{\sin(\theta_l-\varphi_l)\cos(\theta_l-\varphi_l)}{r_l^2}\\
&+i\frac{\d_\e^2}{|\log\e|}\left\{\sum_{l\neq j}\frac{\cos^2(\theta_l-\varphi_l)}{r_l^2}+2\sum_{l=1}^{n^+}\sum_{k\neq j}\frac{\cos(\theta_l-\varphi_l)\cos(\theta_k-\varphi_k)}{r_lr_k}\right\},
\end{split}\end{equation*}
and, according to \eqref{hoj} and \eqref{def-ho},
$$R^{o,j}:=\frac{1}{2}[ R_j(z) + \overline{R_j(\mathcal{R}_j z )}],\qquad \hat{R}^o:=\sum_{j=1}^{n^+} \eta_{j,R_\varepsilon} R^{o,j},$$
with $\eta_{j,R_\varepsilon}$ given in \eqref{def:etajR} and $R_\e$ in \eqref{Reps}. 

We can check that \( |R^{o,j}|_{\sharp \sharp} \leq C\e \sqrt{|\log \e|}\). This is because when looking at the vortex \(j\), \(r_l=O\left(1/(\e \sqrt{|\log \e|})\right)\) for \(l \neq j\). Thus
\begin{align*}
|\hat{R}^o|_{\sharp \sharp} &\leq \max_{1\leq j\leq n^+} \|\eta_{j,R_\e}\|_{L^\infty} \sum_{j=1}^{n^+} |R^{o,j}|_{\sharp \sharp} \leq \sum_{j=1}^{n^+} |R^{o,j}|_{\sharp \sharp} \leq \frac{C\e}{\sqrt{|\log \e|}}.
\end{align*}
Now we define $\tilde{R}^o:=R^o-\hat{R}^o$. We recall that \( R^o=\left(\frac{S_a(V_d)+S_b(V_d)+S_c(V_d)}{iV_d}\right)^o\). It follows from Lemma \ref{errorA} and  Lemma \ref{errorC} that
\begin{equation*}
\left\| \left(\frac{S_a(V_d)+S_b(V_d)}{iV_d}\right)^o \right\|_{**} \leq C \e \sqrt{|\log \e|}.
\end{equation*}
On the other hand, using \eqref{eq:error_S_b}, we find that
\begin{multline*}
\left(\frac{S_b(V_d)}{iV_d} \right)^o-\hat{R}^o=\sum_{j=1}^{n^+} \eta_{j,R_\e}\Bigl\{ \frac{2\hat{d}_\e}{|\log \e|} \sum_{k=1}^{n^+} \sum_{l=1}^{n^+} \frac{\sin(\theta_k-\varphi_k) \cos(\theta_l-\varphi_l)\rho '_k}{\rho_k r_l}-i\Bigl[\frac{\e \hat{d}_\e}{|\log \e|^{1/2}} \sum_{k=1}^{n^+} \frac{\rho'_k}{\rho_k}\cos(\theta_k-\varphi_k)  \\
+\frac{\hat{d}_\e^2}{|\log \e|}\left( \sum_{k=1}^{n^+} \frac{\rho_k''}{\rho_k}\sin^2(\theta_k- \varphi_k)+\frac{\rho'_k}{r_k \rho_k}\cos^2(\theta_k-\varphi_k) +\sum_{k=1}^{n^+} \sum_{l \neq k} \frac{\sin(\theta_k-\varphi_k)\sin(\theta_l-\varphi_l)\rho'_k \rho'_l}{\rho_k\rho_l} \right) \Bigr] \Bigl\}^{o,j}.
\end{multline*}
From the last expression we can obtain that \[ \left\|\left(\frac{S_b(V_d)}{iV_d} \right)^o-\hat{R}^o \right\|_{**} \leq C\e \sqrt{|\log \e|}.\] In order to see this we use that, for \(j\) fixed, expressions of the type \( \frac{\rho_j''}{\rho_j}\sin^2(\theta_j-\varphi_j)+\frac{\rho'_j}{r_j \rho_j} \cos^2(\theta_j-\varphi_j)\) do not appear in the odd decomposition whereas for   \(k\neq j\) a similar expression with \(j\) replaced by \(k\) has the desired size since \(r_k\geq C (\e \sqrt{|\log \e|})^{-1}\). This concludes the proof.
\end{proof}

\begin{lemma}\label{decError}
Let $V_d$ be the approximation given by \eqref{eq:approx}, and $S_b$ and $S_c$ the operators defined at \eqref{S}. Then, for every \(1\leq j\leq n^+\) we have
\begin{equation*}\label{eq:prep_reduction}\begin{split}
\frac{S_b(V_d)}{V_d}=&\,\frac{\d_\e}{|\log\e|}\frac{w_{x_2 x_2}^j(r_je^{i(\theta_j-\varphi_j)})}{w^j(r_je^{i(\theta_j-\varphi_j)})}+\frac{\d_\e \e }{\sqrt{|\log \e|}} \frac{w_{x_1}^j(r_je^{i(\theta_j-\varphi_j)})}{w^j(r_je^{i(\theta_j-\varphi_j)})} 
+G^j_b,\\
\frac{S_c(V_d)}{V_d}=&\,ic\d_\e \e\sqrt{|\log\e|}\frac{w_{x_2}^j(r_je^{i(\theta_j-\varphi_j)})}{w^j(r_je^{i(\theta_j-\varphi_j)})}+G^j_c,
\end{split}
\end{equation*}
with 
\begin{equation*}\begin{split}\label{eq:rest_in_recduction1}
\RE\int_{B(\xi_j^+,R_\e)}w_{x_2 x_2}^j(r_je^{i(\theta_j-\varphi_j)})\overline{w}^j_{x_1}(r_je^{i(\theta_j-\varphi_j)})&=0,
\end{split}\end{equation*}
and
\begin{equation*}\begin{split}\label{eq:rest_in_recduction2}
\RE \int_{B(\xi_j^+,R_\e)} w^j(r_je^{i(\theta_j-\varphi_j)})G_b^j\overline{w}^j_{x_1}(r_je^{i(\theta_j-\varphi_j)})&=O\left(\frac{\e}{\sqrt{|\log \e|}} \right),\\
\RE \int_{B(\xi_j^+,R_\e)} w^j(r_je^{i(\theta_j-\varphi_j)})G_c^j\overline{w}^j_{x_1}(r_je^{i(\theta_j-\varphi_j)})&=O\left(\frac{\e}{\sqrt{|\log \e|}} \right).
\end{split}\end{equation*}
\end{lemma}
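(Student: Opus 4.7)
The plan is to start from the explicit expressions \eqref{eq:error_S_b} for $S_b(V_d)/V_d$ and \eqref{eq:errorSc} for $S_c(V_d)/V_d$, and to split each sum over vortices into a ``diagonal'' part (terms with $l=j$) and an ``interaction'' part (terms with $l\neq j$). The diagonal part will reproduce exactly the stated leading terms involving $w_{x_1}^j/w^j$, $w_{x_2}^j/w^j$ and $w_{x_2x_2}^j/w^j$, while the interaction part is absorbed into $G_b^j$, $G_c^j$.

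First I would verify the matching of the diagonal terms by computing in polar coordinates the relevant quotients evaluated at $r_j e^{i(\theta_j-\varphi_j)}$. Using $w=\rho(r)e^{i\theta}$ and the identities already collected in \eqref{derW}, one obtains
\[
\frac{w_{x_1}^j}{w^j}=\frac{\rho_j'}{\rho_j}\cos(\theta_j-\varphi_j)-\frac{i}{r_j}\sin(\theta_j-\varphi_j),\qquad
\frac{w_{x_2}^j}{w^j}=\frac{\rho_j'}{\rho_j}\sin(\theta_j-\varphi_j)+\frac{i}{r_j}\cos(\theta_j-\varphi_j),
\]
which recovers the $l=j$ summand in the first line of \eqref{eq:error_S_b} and in \eqref{eq:errorSc} up to their prefactors. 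Expanding $w_{x_2x_2}^j/w^j$ via the polar formula for $\partial_{y_2}^2$ gives
\[
\frac{w_{x_2x_2}^j}{w^j}=\frac{\rho_j''}{\rho_j}\sin^2+\Bigl(\frac{\rho_j'}{r_j\rho_j}-\frac{1}{r_j^2}\Bigr)\cos^2+2i\sin\cos\Bigl(\frac{\rho_j'}{r_j\rho_j}-\frac{1}{r_j^2}\Bigr),
\]
whose real and imaginary parts coincide with the $l=j$ contributions of the second line and the last triple sum of \eqref{eq:error_S_b} (noting that the index range of that triple sum is $\sum_{j,l}$ and not $\sum_{j,l\neq j}$, so its diagonal produces exactly the missing $\sin\cos\,\rho_j'/(r_j\rho_j)$ piece). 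The $G_b^j$, $G_c^j$ are then defined as what remains.

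The orthogonality $\RE\int_{B(\xi_j^+,R_\e)} w_{x_2x_2}^j\overline{w_{x_1}^j}=0$ is a parity consequence. In coordinates centered at $\xi_j^+$, the standard vortex satisfies $w(-z)=-w(z)$, so $w_{x_1}$ is even and $w_{x_2x_2}$ is odd under $z\mapsto -z$; the product $w_{x_2x_2}\overline{w_{x_1}}$ is therefore odd, and its integral over the symmetric disk vanishes.

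The main obstacle is the size estimate $\RE\int w^j G_b^j\overline{w_{x_1}^j}=O(\e/\sqrt{|\log\e|})$: direct pointwise bounds on $G_b^j$ are insufficient, and one must exploit angular cancellation. Since $w^j\overline{w_{x_1}^j}=\rho_j\rho_j'\cos\theta_j+i\rho_j^2 r_j^{-1}\sin\theta_j$ is purely of Fourier mode $\pm 1$ in $\theta_j$, only the $\pm 1$ Fourier content of $G_b^j$ contributes. The interaction summands in $G_b^j$ have the structure (up to the prefactor $\hat d_\e^2/|\log\e|$) of a factor depending only on $z-\xi_j^+$ times a function $F_l(z)=\cos(\theta_l-\varphi_l)/r_l$ (or a similar harmonic quantity in $z$ near $\xi_j^+$), whose Taylor expansion around $\xi_j^+$ has mode-zero value of size $1/d_\e = O(\e\sqrt{|\log\e|})$. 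Multiplying by the prefactor and pairing against $w^j\overline{w_{x_1}^j}$ yields a convergent radial integral $\int_0^{R_\e}\rho_j\rho_j'\,dr_j=O(1)$ times the $\theta_j$-integral $\int_0^{2\pi}\cos\theta_j\cos(\theta_j-\varphi_j)\,d\theta_j=\pi\cos\varphi_j$, giving a total of order $\hat d_\e^2|\log\e|^{-1}\cdot\e\sqrt{|\log\e|}=O(\e/\sqrt{|\log\e|})$, as required. For $G_c^j$ the bound is automatic: its interaction sum has prefactor $\hat d_\e\e\sqrt{|\log\e|}$ and each summand is $O(1/d_\e)$, yielding $|G_c^j|=O(\e^2|\log\e|)$ near $\xi_j^+$, which after integration against $w^j\overline{w_{x_1}^j}$ stays well within the claimed bound.
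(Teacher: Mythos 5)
Your proposal is correct and follows essentially the same route as the paper: isolate the diagonal ($l=j$) summands of \eqref{eq:error_S_b} and \eqref{eq:errorSc} and identify them with $w_{x_1}^j/w^j$, $w_{x_2x_2}^j/w^j$ and $w_{x_2}^j/w^j$ (your observation that the last double sum in \eqref{eq:error_S_b} runs over all $l$, so its diagonal supplies the missing $\sin\cos\,\rho_j'/(r_j\rho_j)$ piece of $w_{x_2x_2}^j/w^j$, is exactly the point), and prove the orthogonality by the parity of $w_{x_2x_2}\overline{w}_{x_1}$ under $z\mapsto -z$, which is equivalent to the paper's cosine/sine parity argument. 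You in fact go further than the written proof, which does not address the bounds on $\RE\int w^jG_b^j\overline{w}^j_{x_1}$ and $\RE\int w^jG_c^j\overline{w}^j_{x_1}$ at all: your accounting — the mode-zero content of the interaction factors near $\xi_j^+$ is $O(1/d_\e)=O(\e\sqrt{|\log\e|})$, which after the prefactor $\hat d_\e^2/|\log\e|$ and the convergent radial integral gives $O(\e/\sqrt{|\log\e|})$ for $G_b^j$, while $|G_c^j|=O(\e^2|\log\e|)$ outright — is sound and consistent with the sizes used later in Section \ref{VII}.
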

\begin{proof}
Let us fix $j$, $1\leq j\leq n^+$. Using Lemma \ref{lem:propertiesofrho}, from expression \eqref{eq:error_S_b} we can identify the principal terms and write
\begin{equation*}\begin{split}
\frac{S_b(V_d)}{V_d}=&\,\frac{\d_\e}{|\log\e|}\left[\frac{\r''_j}{\r_j}\sin^2(\theta_j-\varphi_j)+\left(\frac{\r'_j}{r_j\r_j}-\frac{1}{r_j^2}\right)\cos^2(\theta_j-\varphi_j)\right.\\
&\qquad\left.+2i\left(\frac{\r'_j}{r_j\r_j}-\frac{1}{r_j^2}\right)\sin(\theta_j-\varphi_j)\cos(\theta_j-\varphi_j)\right]\\
&+\frac{\d_\e\e}{|\log\e|^{1/2}}\left[\frac{\r'_j}{r_j}\cos(\theta_j-\varphi_j)-i\frac{\sin(\theta_j-\varphi_j)}{r_j}\right]+G_b^j,
\end{split}\end{equation*}
that this is the same as 
$$\frac{S_b(V_d)}{V_d}=\frac{\d_\e}{|\log\e|}\frac{w_{x_2 x_2}^j(r_je^{i(\theta_j-\varphi_j)})}{w^j(r_je^{i(\theta_j-\varphi_j)})}+\frac{\d_\e \e }{\sqrt{|\log \e|}} \frac{w_{x_1}^j(r_je^{i(\theta_j-\varphi_j)})}{w^j(r_je^{i(\theta_j-\varphi_j)})}+G_b^j.$$
Likewise, from \eqref{eq:errorSc} we can divide
\begin{equation*}\begin{split}
\frac{S_c(V_d)}{V_d}=&\,ic\d_\e \e\sqrt{|\log\e|}\left(\frac{\rho_j'}{\r_j}\sin(\theta_j-\varphi_j)+i\frac{1}{r_j}\cos(\theta_j-\varphi_j)\right)+ G_c^j\\
=&\,ic\d_\e \e\sqrt{|\log\e|}\frac{w_{x_2}^j(r_je^{i(\theta_j-\varphi_j)})}{w^j(r_je^{i(\theta_j-\varphi_j)})}+G_c^j.
\end{split}\end{equation*}
Finally, \eqref{eq:rest_in_recduction1} holds using the formulae for \(w_{x_1}^j, w^j_{x_2x_2}\), the evenness of the cosinus and the oddness of sinus.
\end{proof}

\section{A projected problem}\label{VI}
For the sake of simplicity, in this section we will restrict ourselves to the case of Theorem \ref{th:main1}, that is, $n^+=n$ and $n^-=0$. The case of Theorem \ref{th:main2} follows with straightforward adaptations.

The final goal of this section is to prove existence of solution of the projected problem 
\begin{equation}\label{eq:nonlinear}
\left\{
\begin{split}
&\L^\e(\phi)=-E+N(\phi)+iV_d\sum_{j=1}^{n^+} \left\{c^1_{j}\frac{\chi_j w_{x_1}(z-\xi^+_j)}{iw(z-\xi^+_j)} +c^2_{j}\frac{\chi_j w_{x_2}(z-\xi^+_j)}{iw(z-\xi^+_j)} \right\}\;\; \text{ in } \R^2, \\
&\RE \int_{\R^2} \chi \overline{\phi_j}w_{x_1}=\RE \int_{\R^2} \chi \overline{\phi_j}w_{x_2}=0, \ \text{ with }\phi_j(z)=iw(z)\frac{\phi(z+\xi_j^+)}{iV_d(z+\xi_j^+)}, \ \ j=1,\cdots, n^+, \\
&\phi \text{ satisfies } \eqref{eq:eqsymmetriesofpsi},
\end{split}
\right.
\end{equation}
where \(\L^\e\) is defined in \eqref{def_mathcal_L_eps} and \(N \) is defined in \eqref{def:N} and
\begin{equation}
\nonumber
\chi(z):=\eta_1\left(\frac{|z|}{2} \right), \ \ \ \chi_j(z):=\eta_1\left(\frac{|z-\xi^+_j|}{2} \right),
\end{equation}
with $\eta_1$ a smooth cut-off function such that $\eta_1(t)=1$ if $t\leq 1$ and $\eta_1(t)=0$ if $t\geq2$.

To do so, we will start by considering a linear projected version. Indeed, given a function $h$  satisfying the symmetries \eqref{eq:eqsymmetriesofpsi} and with an appropriate decay, our first aim  is to solve the linear equation
\begin{align}
\label{eq:linear}
\left\{
\begin{aligned}
& \L^\e(\phi)=iV_dh+iV_d\sum_{j=1}^{n^+} \left\{c^1_{j}\frac{\chi_j w_{x_1}(z-\xi^+_j)}{iw(z-\xi^+_j)} +c^2_{j}\frac{\chi_j w_{x_2}(z-\xi^+_j)}{iw(z-\xi^+_j)} \right\}
\quad\text{in }\R^2,
\\
& \RE\int_{B(0,4)} \chi \overline{\phi_j}w_{x_1}=\RE\int_{B(0,4)} \chi \overline{\phi_j}w_{x_2}=0, \text{ with }\phi_j(z)=iw(z)\frac{\phi(z+\xi_j^+)}{iV_d(z+\xi_j^+)}, \ \ j=1, \cdots,n^+,\\
&\phi \text{ satisfies the symmetry }  \eqref{eq:eqsymmetriesofpsi},
\end{aligned}
\right.
\end{align}
We remark that the elements \( w_{x_1},w_{x_2}, iw\) are the basis of the kernel of the linearized Ginzburg-Landau operator around the standard vortex \(w\) in a natural energy space, cf.\ \cite{DFK}. A priori we should add also the projections on the elements \(i\chi_j w(z-\xi^+_j)\) and ask an orthogonality condition with respect to \(i\chi_jw(z-\xi_j^+)\). However, thanks to the symmetry assumptions \eqref{eq:eqsymmetriesofpsi}, the orthogonality condition with respect to \(iw(z-\xi_j^+)\) is automatically satisfied.  Furthermore, also thanks to these symmetry assumptions and to the symmetry of the operator \(\mathcal{L}^\e\), we can see that the projections onto \(i\chi_j w(z-\xi^+_j)\) are equal to zero. Indeed, if
\[
\L^\e(\phi)=iV_dh+iV_d\sum_{j=1}^{n^+} \left\{c^1_{j}\frac{\chi_j w_{x_1}(z-\xi^+_j)}{iw(z-\xi^+_j)} +c^2_{j}\frac{\chi_j w_{x_2}(z-\xi^+_j)}{iw(z-\xi^+_j)}+ c_j^3 \chi_j \right\}\]
since \( \L^\e(\phi)(\overline{z})=\overline{\L^\e(\phi)(z)}\) and \(\chi_j(\overline{z})=\chi_j(z)\) we find that
\[ \sum_{j=1}^{n^+} c_j^3\chi_j(z)= -\sum_{j=1}^{n^+} c_j^3\chi_j(z)\]
and then \(c_j^3=0\) for all \(j=1,\dots,n^+\).
Next we notice that, by \eqref{eq:eqsymmetriesofpsi}, the Lyapunov-Schmidt coefficients \(c^1_j,c^2_j\) \(1\leq j \leq n^+\) are all related and we can work with only one coefficient.

\begin{lemma}
Let \(\phi\) be a solution to
\[\L^\e(\phi)=iV_dh+iV_d\sum_{j=1}^{n^+} \left\{c^1_{j}\frac{\chi_j w_{x_1}(z-\xi_j^+)}{iw(z-\xi_j^+)}+c^2_{j}\frac{\chi_j w_{x_2}(z-\xi_j^+)}{iw(z-\xi_j^+)}\right\}, \quad \text{ with } \phi=iV_d\psi \text{ and } \|\psi\|_*<+\infty\]
and assume that $\psi$ and $h$ satisfy \eqref{eq:eqsymmetriesofpsi}.
Then, all the coefficients \(c_j^1,c_j^2\) can be expressed in terms of \(c_1^1\) only. 
\end{lemma}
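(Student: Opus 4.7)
The idea is to exploit the two discrete symmetries imposed in \eqref{eq:eqsymmetriesofpsi} — the $n^+$-fold rotation $z\mapsto e^{i\beta}z$ with $\beta:=2\pi/n^+$, and the reflection $z\mapsto\bar z$ — together with the symmetries of $V_d$, $w$, and the operator $\mathcal L^\varepsilon$, to read off linear relations between the coefficients $c_j^1,c_j^2$ by comparing the projected equation to itself composed with each symmetry.

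\emph{Step 1: rotational constraint.} I would evaluate the projected equation at $e^{i\beta}z$. Since $V_d(e^{i\beta}z)=V_d(z)$, $h(e^{i\beta}z)=h(z)$, $\chi_j(e^{i\beta}z)=\chi_{j-1}(z)$ (with cyclic indexing mod $n^+$), and $\mathcal L^\varepsilon$ preserves the rotational symmetry of $\phi$, the left-hand side is unchanged. On the right-hand side, the key point is the transformation of the basis elements. From $w(e^{i\beta}z)=e^{i\beta}w(z)$ the chain rule gives
\begin{align*}
\frac{w_{x_1}(e^{i\beta}z)}{w(e^{i\beta}z)} &= \cos\beta\,\frac{w_{x_1}(z)}{w(z)}-\sin\beta\,\frac{w_{x_2}(z)}{w(z)},\\
\frac{w_{x_2}(e^{i\beta}z)}{w(e^{i\beta}z)} &= \sin\beta\,\frac{w_{x_1}(z)}{w(z)}+\cos\beta\,\frac{w_{x_2}(z)}{w(z)}.
\end{align*}
Setting $k=j-1$ and matching coefficients in front of the linearly independent functions $\chi_k(z)w_{x_\alpha}(z-\xi_k^+)/(iw(z-\xi_k^+))$ yields the two-by-two relation
\begin{equation*}
c_{k+1}^1+ic_{k+1}^2 \;=\; e^{i\beta}\bigl(c_k^1+ic_k^2\bigr),
\end{equation*}
and therefore $c_k^1+ic_k^2=e^{i(k-1)\beta}(c_1^1+ic_1^2)$ for every $k=1,\dots,n^+$.

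\emph{Step 2: reflection constraint.} Next I would evaluate the equation at $\bar z$. The reflection acts on the vortices through the involution $\sigma(j):=2-j \pmod{n^+}$ (so $\overline{\xi_j^+}=\xi_{\sigma(j)}^+$ and $\sigma(1)=1$). Using $w(\bar z)=\overline{w(z)}$, a direct computation gives
$$w_{x_1}(\bar z-\xi_j^+)=\overline{w_{x_1}(z-\xi_{\sigma(j)}^+)}, \qquad w_{x_2}(\bar z-\xi_j^+)=-\,\overline{w_{x_2}(z-\xi_{\sigma(j)}^+)}.$$
Combined with $V_d(\bar z)=\overline{V_d(z)}$, $h(\bar z)=-\overline{h(z)}$, $\chi_j(\bar z)=\chi_{\sigma(j)}(z)$, the identity $-i\bar A=\overline{iA}$, and the symmetry $\mathcal L^\varepsilon(\phi)(\bar z)=\overline{\mathcal L^\varepsilon(\phi)(z)}$ inherited from $S$ and from $\phi(\bar z)=\overline{\phi(z)}$, both sides of the equation at $\bar z$ can be rewritten as the conjugate of the equation at $z$ with modified coefficients. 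Matching coefficients (the real-valuedness of $c_j^1,c_j^2$ is used here) and reindexing $k=\sigma(j)$ gives
$$c_{\sigma(k)}^1=c_k^1,\qquad c_{\sigma(k)}^2=-c_k^2.$$
Taking $k=1$ in the second equality, since $\sigma(1)=1$, forces $c_1^2=0$.

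\emph{Step 3: conclusion.} Combining both steps, $c_k^1+ic_k^2=e^{i(k-1)\beta}c_1^1$, so
$$c_k^1=c_1^1\cos\bigl((k-1)\beta\bigr),\qquad c_k^2=c_1^1\sin\bigl((k-1)\beta\bigr),\qquad k=1,\dots,n^+,$$
which is consistent with the reflection relations of Step 2 since $(n^+-k+1)\beta$ and $(k-1)\beta$ differ by $2\pi$ modulo sign in sine.

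The only delicate point is the bookkeeping of how the two complex basis elements at each $\xi_j^+$ transform under the two symmetries; once the transformation rules above are established from $w(e^{i\beta}z)=e^{i\beta}w(z)$ and $w(\bar z)=\overline{w(z)}$, the rest is algebraic identification using the linear independence (modulo cut-offs) of the $2n^+$ basis functions supported near distinct vortices.
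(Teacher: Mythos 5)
Your proof is correct and follows essentially the same two-step symmetry argument as the paper: the $n^+$-fold rotation yields the $2\times 2$ rotation relation between consecutive pairs $(c_j^1,c_j^2)$ (the paper writes it in matrix form, you in the equivalent complex form $c_{k+1}^1+ic_{k+1}^2=e^{i\beta}(c_k^1+ic_k^2)$), and the reflection $z\mapsto\bar z$ forces $c_1^2=0$. Your bookkeeping of the index permutation $\sigma(j)=2-j \pmod{n^+}$ under conjugation of the vortex locations is in fact slightly more careful than the paper's displayed reflection identity, and the conclusions agree since $\xi_1^+$ is real.
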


\begin{proof}
Since \( V_d(e^{\frac{2i\pi}{n^+}}z)=V_d(z)\), and the cut-off function \(\eta\) defined in \eqref{def:cut_off_eta} also satisfies this property, it can be seen that 
$$\L^\e[\phi(e^{\frac{2i\pi}{n^+}}z)]=\L^\e(\phi)(e^{\frac{2i\pi}{n^+}}z).$$
Indeed we can write \(\L^\e(\phi)=\eta L_0(\phi)+(1-\eta)iV_dL'(\psi)\), with \(\phi=iV_d\psi\), \(L_0\) defined by \eqref{L0} and \(L'\) defined in \eqref{eq:mathL}. For the Laplacian part in the operator  \(L_0\) and \(L'\) it is well-known. The other terms involve the identity operator or are differential operators in the angular variable \(s\). Multiplying by \(e^{\frac{2i\pi}{n^+}}\) the variable \(z\) amounts to make a translation in the \(s\) variable, and hence the differential operators in \(s\) respect the symmetry.

Furthermore, noticing that 
$$e^{\frac{2i\pi}{n^+}}z-\xi_j^+=(z-\xi_{j-1}^+)e^{\frac{2i\pi}{n^+}},$$
we have  \(w(e^{\frac{2i\pi}{n^+}}z-\xi_j^+)=e^{\frac{2i\pi}{n^+}}w(z-\xi_{j-1}^+)\) and, using formulae \eqref{derW},
\begin{equation*}\begin{split}
w_{x_1}(e^{\frac{2i\pi}{n^+}}z-\xi_j^+) &=e^{\frac{2i\pi}{n}}\left(\cos \left(\frac{2\pi}{n^+}\right) w_{x_1}(z-\xi_{j-1}^+)-\sin\left(\frac{2\pi}{n^+}\right)w_{x_2}(z-\xi_{j-1}^+)\right), \nonumber \\
w_{x_2}(e^{\frac{2i\pi}{n^+}}z-\xi_j^+) &=e^{\frac{2i\pi}{n}}\left(\sin \left(\frac{2\pi}{n^+}\right)w_{x_1}(z-\xi_{j-1}^+)+\cos\left(\frac{2\pi}{n^+}\right)w_{x_2}(z-\xi_{j-1}^+)\right), \nonumber
\end{split}\end{equation*}
where the indices are taken modulo \(n^+-1\).
 Since \(\psi( e^{\frac{2i\pi}{n^+}}z)=\psi(z)\), we must have
 \begin{equation}
 \sum_{j=1}^{n^+} \left\{c^1_{j}\frac{\chi_j w_{x_1}(e^{\frac{2i\pi}{n^+}}z-\xi_j^+)}{iw(e^{\frac{2i\pi}{n^+}}z-\xi_j^+)}+c^2_{j}\frac{\chi_j w_{x_2}(e^{\frac{2i\pi}{n^+}}z-\xi_j^+)}{iw(e^{\frac{2i\pi}{n^+}}z-\xi_j^+)}\right\}=\sum_{j=1}^{n^+} \left\{c^1_{j}\frac{\chi_j w_{x_1}(z-\xi_j^+)}{iw(z-\xi_j^+)}+c^2_{j}\frac{\chi_j w_{x_2}(z-\xi_j^+)}{iw(z-\xi_j^+)}\right\} 
 \end{equation}
and this implies that
\begin{equation*}
\begin{pmatrix}
\cos \left(\frac{2\pi}{n^+}\right) & \sin \left(\frac{2\pi}{n^+}\right) \\
-\sin \left(\frac{2\pi}{n^+}\right) & \cos \left(\frac{2\pi}{n^+}\right)
\end{pmatrix}
\begin{pmatrix}
c^1_j \\
c^2_j
\end{pmatrix}
=\begin{pmatrix}
c^1_{j-1} \\
c^2_{j-1}.
\end{pmatrix}
\end{equation*}
Hence all the coefficients can be expressed in terms of \(c_1^1,c_1^2\). But now we can use the symmetry with respect to the horizontal axis. It can be seen that
$$\mathcal{L}^\e(\phi)(\overline{z})=\overline{\mathcal{L}^\e(\phi)(z)},$$
and thus, since \( \phi(\overline{z})=\overline{\phi(z)}\), we have
\begin{equation*}
\sum_{j=1}^{n^+} \left\{c^1_{j}\frac{\chi_j w_{x_1}(\overline{z}-\xi_j^+)}{iw(\overline{z}-\xi_j^+)}+c^2_{j}\frac{\chi_j w_{x_2}(\overline{z}-\xi_j^+)}{iw(\overline{z}-\xi_j^+)}\right\}=\sum_{j=1}^{n^+} \left\{c^1_{j}\frac{\chi_j \overline{w}_{x_1}(z-\xi_j^+)}{i\overline{w}(z-\xi_j^+)}+c^2_{j}\frac{\chi_j \overline{w}_{x_2}(z-\xi_j^+)}{i\overline{w}(z-\xi_j^+)}\right\}.
\end{equation*}
Using that \(w_{x_1}(\overline{z})=\overline{w}_{x_1}(z)\) and \(w_{x_2}(\overline{z})=-\overline{w}_{x_2}(z)\), we conclude that necessarily \(c^2_1=0\). Thus all the coefficients can be expressed in terms of \(c^1_1\) only.
\end{proof}
\begin{remark}
For Theorem \ref{th:main2} we also need to consider the vortex of degree $-1$ at the origin, corresponding to the case $n^-=1$ and $\xi_1^-=0$. That is the right hand-side of \eqref{eq:linear} has to be modified to
\begin{equation*}
iV_dh+iV_d \Bigl[c^0_1\frac{\chi\overline{w}_{x_1}(z)}{iw(z)}+c^0_2 \frac{\chi\overline{w}_{x_2}(z)}{iw(z)}+\sum_{j=1}^{n^+}  \left\{c^1_{j}\frac{\chi_j w_{x_1}(z-\xi_j^+)}{iw(z-\xi_j^+)}+c^2_{j}\frac{\chi_j w_{x_2}(z-\xi_j^+)}{iw(z-\xi_j^+)}\right\}\Bigr]
\end{equation*}
As in the previous case, if $\psi$ satisfies \eqref{eq:eqsymmetriesofpsi} it follows that \(c^1_j,c^2_j\) can be expressed in terms of \(c^1_1\) only. By using the symmetry \(\psi(e^{\frac{2i\pi}{n^+}}z)=\psi(z)\) we can also see that \(c^0_1=c^0_2=0\).
\end{remark}

Recall the notation given in \eqref{xij}. Writing $\psi:\mathbb C\to\mathbb C$ as $\psi=\psi_1+i\psi_2$ we define, given $\alpha , \sigma \in (0,1)$,
\begin{align*}
\|\psi\|_* := \sum_{j=1}^{N}  \| V_d \psi \|_{C^{2,\alpha}(r_j<3)} + \| \psi_1 \|_{1,*} + \| \psi_2 \|_{2,*},
\end{align*}
where
\begin{align*}
\|\psi_1\|_{1,*} 
&:= \sum_{j=1}^{N} \|\psi_1\|_{L^\infty(r_j>2)}+\sup_{2<r_j<\frac{2}{\e},\,1\leq j\leq N}|\nabla \psi_1|\left(\sum_{j=1}^Nr_j^{-1}\right)^{-1}+\sup_{r>\frac{1}{\e}}\left(\frac{1}{\e}|\p_r\psi_1|+|\p_s\psi_1|\right)\\
&+ \sup_{2<r_j<R_\e,\,1\leq j\leq N}|D^2\psi_1|\left(\sum_{j=1}^Nr_j^{-2}\right)^{-1} \\
&+\sup_{2<|z-\xi_j|<R_\e,\,1\leq j\leq N}[D^2\psi_1]_{\alpha,B_{|z|/2}(z)}\left(\sum_{j=1}^N|z-\xi_j|^{-2-\alpha}\right)^{-1},
\\
\|\psi_2\|_{2,*} 
&:= \sup_{r_j>2,\,1\leq j\leq N}|\psi_2|\left(\sum_{j=1}^Nr_j^{-2+\sigma}+\e^{\sigma-2}\right)^{-1}+\sup_{2<r_j<\frac{2}{\e},\,1\leq j\leq N}|\nabla \psi_2|\left(\sum_{j=1}^Nr_j^{-2+\sigma}\right)^{-1}\\
&+\sup_{r>\frac{1}{\e}}\left(\frac{1}{\e^{2-\sigma}}|\p_r \psi_2|+\frac{1}{\e^{1-\sigma}}|\p_s\psi_2|\right)+ \sup_{2<r_j<R_\e,\,1\leq j\leq N}|D^2\psi_2|\left(\sum_{j=1}^Nr_j^{-2+\sigma}\right)^{-1}\\
&+\sup_{2<|z-\xi_j|<R_\e,\,1\leq j\leq N}[D^2\psi_2]_{\alpha,B_{|z|/2}(z)}\left(\sum_{j=1}^N|z-\xi_j|^{-2+\alpha}\right)^{-1}.
\end{align*}
We also recall the definition of the norm $\|\cdot\|_{**}$ given in \eqref{def:norm_**}. Indeed, given a control on the right hand size measured with $\|\cdot\|_{**}$, the norm $\|\cdot\|_{*}$ provides the best decay we can expect for the solution $\psi$ (for both real and imaginary parts) and its derivatives, as the following proposition states. Thus we can establish the following invertibility result for problem \eqref{eq:linear}.
\begin{proposition}\label{prop:linearfull}
If $h$ satisfies \eqref{eq:eqsymmetriesofpsi} and $\|h\|_{**}<+\infty$ then for $\e>0$ sufficiently small there exists a unique solution $\phi=T_\e(iV_dh)$ to \eqref{eq:linear} with $\|\psi\|_*<\infty$, where \(\phi=iV_d \psi\).
Furthermore, there exists a constant $C>0$ depending only on  $\alpha, \sigma \in(0,1)$ such that this solution satisfies
\begin{equation*}
\|\psi\|_*\leq C\|h\|_{**}.
\end{equation*}
\end{proposition}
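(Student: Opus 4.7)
The plan is the classical a priori estimate plus Fredholm existence scheme for vortex Lyapunov--Schmidt problems. I would first prove the bound $\|\psi\|_* \leq C\|h\|_{**}$ for any solution of \eqref{eq:linear} (which yields uniqueness as well), and then obtain existence by approximating the problem on balls $B(0,\rho)$ and letting $\rho \to \infty$. For the a priori estimate I would argue by contradiction: suppose there exist sequences $\e_n \to 0$, $h_n$, $\psi_n$ and Lyapunov--Schmidt coefficients $c^{1,n}_j, c^{2,n}_j$ with $\|\psi_n\|_* = 1$ and $\|h_n\|_{**} \to 0$. Testing \eqref{eq:linear} against $\chi_j w_{x_l}(z-\xi^+_j)$, using the preceding symmetry lemma to reduce all $c^{k,n}_j$ to a multiple of $c^{1,n}_1$, and integrating by parts on $B(\xi^+_1,4)$ while controlling the boundary terms via the $C^{2,\alpha}$ part of the norm, should force the coefficients to vanish in the limit, reducing the problem to $\L^{\e_n}(\phi_n) = iV_d h_n + o(1)$.

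The contradiction is then reached regime by regime along the structure of the $*$-norm. Near each $\xi^+_j$, the translated unknown $\phi_{j,n}(\z) = iw(\z)\psi_n(\z+\xi^+_j)$ solves $L^{\e_n}_j(\phi_{j,n}) = o(1)$ with $L^\e_j$ from \eqref{eq:defL_j} a small perturbation of the Ginzburg--Landau linearization around $w$ (thanks to \eqref{eq:estimatesonalpha_j}); standard elliptic regularity yields a $C^{2,\alpha}_{\mathrm{loc}}$ limit $\phi_{j,\infty}$ on $\R^2$ satisfying the orthogonality conditions $\RE\int\chi\,\overline{\phi_{j,\infty}}\,w_{x_l}=0$ and, by \eqref{eq:eqsymmetriesofpsi}, orthogonal to $iw$ as well. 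The nondegeneracy of the degree one vortex in the relevant energy space \cite{DFK} then gives $\phi_{j,\infty}=0$. In the intermediate annulus $2<r_j<R_\e$ I would write $\psi_n = \psi_{1,n}+i\psi_{2,n}$ and exploit that, where $V_d$ does not vanish, $iV_d L'(\psi)$ decouples to leading order as $\Delta\psi_1 - 2|V_d|^2\psi_2 \approx \RE(h)$, $\Delta\psi_2 \approx \IM(h)$; barriers of the form $\sum_j r_j^{-\beta}$ with $\beta=0$ for $\psi_1$ and $\beta=2-\sigma$ for $\psi_2$ then provide the required pointwise decay. The outer region $r>1/\e$ is treated by passing to the rescaled variable $y=\e z$, where the $\e^2$-coefficients become $O(1)$; a Liouville-type argument for the resulting limit operator (using that $\sigma<1$ is precisely the range in which $\Delta^{-1}$ acts well on slow-decay data in $\R^2$) rules out a nontrivial limit and closes the contradiction.

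Once the a priori estimate is established, existence follows by solving the projected problem on $B(0,\rho_m)$ with homogeneous boundary conditions: on each finite ball the linear operator fits into a Fredholm alternative framework in $H^1_0$, the projections onto $\chi_j w_{x_l}$ kill the explicit kernel directions, and the a priori bound gives invertibility uniformly in $\rho_m$, so one can extract a limit as $\rho_m\to\infty$. The hard part I expect is matching the two formulations of $\L^\e$, namely $L_0$ applied to $\phi$ near the vortices and $iV_d L'$ applied to $\psi$ in the exterior, glued via the cut-off $\eta$ in \eqref{def_mathcal_L_eps}: one must verify that the $C^{2,\alpha}$ interior estimates near each vortex transition smoothly to the weighted pointwise estimates on $\{r_j>2\}$ encoded in $\|\cdot\|_*$, that the orthogonality relations survive the blow-up limit under this mixed formulation, and that the genuinely asymmetric decay rates imposed on $\RE(\psi)$ versus $\IM(\psi)$ (reflecting the different behaviors $r_j^{-2}$ and $r_j^{-2+\sigma}$ in $\|h\|_{**}$) are respected on both sides of the gluing annulus $1<r_j<2$.
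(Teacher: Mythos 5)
Your overall scheme---an a priori estimate obtained by contradiction/compactness, with barriers in the intermediate region and the nondegeneracy of the degree-one vortex near each $\xi_j^+$, followed by existence via the Fredholm alternative on expanding balls and passage to the limit---is exactly the route the paper takes (following \cite{DdPMR}): the reduction of all Lyapunov--Schmidt coefficients to $c_1^1$ by symmetry, the use of Lemma \ref{lem:ellipticestimatesL0} in the blow-up near the vortices, and the treatment of the far region $r>1/\e$ by rescaling all match Lemma \ref{FirstEstimate} and the proof of Proposition \ref{prop:linearfull}.

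There is, however, one concrete step that fails as written: the barrier for $\psi_1$ in the annulus $2<r_j<R_\e$. You propose a barrier of the form $\sum_j r_j^{-\beta}$ with $\beta=0$, i.e.\ a constant. A constant is harmonic, so it cannot dominate the source term $\RE(h)$, which is only controlled by $\sum_j r_j^{-2}+\e^2$ in the $\|\cdot\|_{**}$ norm; and, unlike the $\psi_2$ equation, the $\psi_1$ equation carries no zeroth-order ``mass'' term to absorb the right-hand side, since $-2i|V_d|^2\IM(\psi)$ is purely imaginary and only enters the $\psi_2$ component. This is precisely why the paper uses the \emph{angular} barrier $\mathcal B_1=M_1\,\theta_1\bigl(\pi-\tfrac{n\theta_1}{2}\bigr)$ in the sector $\Theta_1$: it is uniformly bounded, yet $-\Delta\mathcal B_1=M_1 n/r_1^2$, which matches the $r_1^{-2}$ decay of $\RE(h)$. (Your barrier $r_j^{-(2-\sigma)}$ for $\psi_2$ does work, but only because of the mass term $2|V_d|^2\psi_2$, as $r^{-(2-\sigma)}$ is subharmonic away from the origin; the mechanism for the two components is genuinely different and should be made explicit.) With that correction the argument goes through as in the paper.
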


The proof can be found at subsection \ref{sec:prop1}. This result allows us to solve a non-linear projected problem, following the usual scheme of the Lyapunov-Schmidt reduction methods. However, the a priori estimate in Proposition \ref{prop:linearfull} is not enough to solve the reduced problem. More precisely, due to the large size of the error of the approximation in the norm $\|\cdot\|_{**}$ (which is of order $|\log\e|^{-1}$, see Proposition \eqref{globalError}), a fixed point argument would give a too large $\psi$, making impossible to choose the parameter \(\hat{d}_\e\) so that the Lyapunov-Schmidt coefficient \(c_1^1\) in \eqref{eq:linear} vanishes.

To overcome this difficulty we will need more accurate a priori estimates, relying on the symmetries of the error and the function $\psi$. Indeed, the largest part of the error can be seen to be orthogonal to the kernel (see Lemma \ref{decError}) and it will not play a role at the reduction step, what allows us to refine the estimates according to its symmetry, in the spirit of Lemma \ref{lemma:decomposition}.

Let us consider $\psi:\mathbb C\to \mathbb C$ and the relation $
z = \rho_j e^{i\theta_j} + \xi_j $.
We  can decompose \(\psi\) in Fourier series in $\theta_j$ as in \eqref{def:h_Fourier}
and define
\begin{align*}
\psi^{e,j} := \sum_{k \text{ even}} \psi^{k,j}, \quad
\psi^{o,j} := \sum_{k \text{ odd}} \psi^{k,j}.
\end{align*}
The idea behind making this decomposition is that $\psi^{e,j}$ is large but orthogonal to the kernel near $\xi_j$ by symmetry, while $\psi^{o,j}$ is not orthogonal but small.
With \(\mathcal{R}_j\) given in \eqref{def:reflection_R_j}, we have
\begin{align}
\nonumber
\psi^{o,j} (\mathcal{R}_j z ) &=  \overline{\psi^{o,j}(z)} ,\quad 
\psi^{e,j}(\mathcal{R}_j z ) = - \overline{\psi^{e,j}(z)} ,
\end{align}
and we can define equivalently
\begin{align}
\label{psiejoj}
\psi^{o,j}(z) &= \frac{1}{2}[ \psi(z) + \overline{\psi(\mathcal{R}_j z )}] , \quad
\psi^{e,j}(z) = \frac{1}{2}[ \psi(z) - \overline{\psi(\mathcal{R}_j z )}] .
\end{align}
Let \(R_\e\) and \(\eta_{j,R}\) from \eqref{Reps} and  \eqref{def:etajR}. We consider a global function  $\psi^o$ defined as  
\begin{align}
\label{def-psio}
 \psi^o &:= \sum_{j=1}^N\eta_{j,\frac{R_\e}{2}} \psi^{o,j},
\end{align}
that represents the {\em  odd} part of $\psi$ around each vortex $\xi_j$, localized with a cut-off function, and corresponds to the small part of $\psi$.

This part arises from terms in the error $R^o$ that are small, but decay slowly, so we need to estimate it in norms that allow for growth up to a certain distance. Namely, 
\begin{align*}
| \psi |_\sharp &:= 
\sum_{j=1}^N
|\log\varepsilon|^{-1}
\| V_d \psi \|_{C^{2,\alpha}(r_j<3)}
+ |\psi_1|_{\sharp,1}+ |\psi_2|_{\sharp,2} ,
\end{align*}
where
\begin{align}
\label{normSharp1}
|\psi_1|_{\sharp,1}
& :=
\sup_{2<r_j<R_\e,\, 1\leq j\leq N}\left[|\psi_1|\left(\sum_{j=1}^Nr_j\log(2R_\e/r_j)\right)^{-1}+|\nabla\psi_1|\left(\sum_{j=1}^N\log(2R_\e/r_j)\right)^{-1}\right],
\\
\label{normSharp2}
|\psi_2|_{\sharp,2} 
&:=
\sup_{2<r_j<R_\e,\, 1\leq j\leq N}\left[(|\psi_2|+|\nabla\psi_2|)\left(\sum_{j=1}^N(r_j^{-1+\sigma}+r_j^{-1}\log(2R_\e/r_j))\right)^{-1}\right],
\end{align}
with $\sigma\in (0,1)$. The norm \(|\cdot|_{\sharp }\) is built in correspondence with the norm \(|\cdot|_{\sharp \sharp}\) which estimates the {\it odd} part of the error of the ansatz. With the help of this norm we can establish precise estimates on the {\it odd} part of $\psi$.

\begin{proposition}
\label{prop:sharp2b}
Suppose that $h$ satisfies the symmetries \eqref{eq:eqsymmetriesofpsi}
and  $\|h \|_{**}<\infty$.
Suppose furthermore that  $h^o $ defined by \eqref{def-ho} is decomposed as $ h^o = \hat{h}^o + \tilde{h}^o$ where $|  \hat{h}^o |_{\sharp\sharp}<\infty$ 
and $\hat{h}^o$, $\tilde{h}^o$ satisfy 
\begin{align}
\nonumber
\hat{h}^o(\mathcal{R}_j z ) =  \overline{\hat h^o(z)},\quad \tilde{h}^o(\mathcal{R}_j z ) =  \overline{\tilde h^o(z)},\quad 
|z-\xi_j| <  R_\varepsilon, \quad j=1,\cdots,N,
\end{align}
and have support in $\bigcup_{j=1}^NB_{2R_\varepsilon}(\xi_j) $.
Let us write $\psi = \psi^e + \psi^o$ with $\psi^o$ defined by \eqref{def-psio}. Then there exists $C>0$ such that $\psi^o$ can be decomposed as $\psi^o = \hat \psi^o + \tilde \psi^o$, with each function supported in 
$\bigcup_{j=1}^N B_{2R_\varepsilon}(\xi_j) $ 
and satisfying 
\begin{align}
\label{est:prop5.3-1}
|\hat\psi^o|_\sharp 
& \leq C\left(
|\hat h^o |_{\sharp\sharp} 
+ \varepsilon \sqrt{ | \log\varepsilon|} ( \|\hat h^o\|_{**}  
+  
\|  h - h^o\|_{**} )\right)
\\
\label{est:prop5.3-2}
\| \tilde\psi^o \|_* 
&\leq C
 \|\tilde h^o\|_{**},  
 \\
 \|\hat \psi^o\|_*+\|\tilde \psi^o\|_* & \leq C\left( \|h\|_{**}+\|\hat h^o\|_{**}+\|\tilde h^o\|_{**}\right) \nonumber
\end{align}

and
\begin{align}
\nonumber
\hat{\psi}^o(\mathcal{R}_j z ) =  \overline{\hat \psi^o(z)},\quad \tilde{\psi}^o(\mathcal{R}_j z ) =  \overline{\tilde \psi^o(z)},\quad 
|z-\xi_j| <  R_\varepsilon, \quad j=1,\cdots,N.
\end{align}
\end{proposition}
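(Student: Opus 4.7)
The strategy is to construct $\hat{\psi}^o$ and $\tilde{\psi}^o$ by solving two separate linear projected problems, exploiting the approximate equivariance of $\mathcal{L}^\varepsilon$ under each reflection $\mathcal{R}_j$. Near $\xi_j$, the principal part of $\mathcal{L}^\varepsilon$ coincides with the linearized Ginzburg-Landau operator $L_0^{\mathrm{GL}}$ around $w$ (see \eqref{defL_j} and \eqref{eq:estimatesonalpha_j}), which commutes exactly with $\mathcal{R}_j$ in local coordinates; the remaining terms of order $\varepsilon^2$ and $\varepsilon^2|\log\varepsilon|$ coming from $S_b$ and $S_c$ break this symmetry only mildly, producing commutator errors of controlled size. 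This is the mechanism behind the correction term $\varepsilon\sqrt{|\log\varepsilon|}(\|\hat h^o\|_{**}+\|h-h^o\|_{**})$ in \eqref{est:prop5.3-1}.

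The easy part is $\tilde{\psi}^o$. Since $h$ satisfies \eqref{eq:eqsymmetriesofpsi}, the definition \eqref{def-ho} together with the rotational arrangement of the $\xi_j$'s ensures that $h^o$, and hence $\tilde{h}^o$, inherits the global symmetries \eqref{eq:eqsymmetriesofpsi} (possibly after a negligible symmetrization). I would therefore apply Proposition \ref{prop:linearfull} directly with right-hand side $iV_d\tilde{h}^o$ to obtain $\tilde{\psi}^o$ with $\|\tilde{\psi}^o\|_*\leq C\|\tilde{h}^o\|_{**}$. The local $\mathcal{R}_j$-odd structure $\tilde{\psi}^o(\mathcal{R}_j z)=\overline{\tilde{\psi}^o(z)}$ in $B_{R_\varepsilon}(\xi_j)$ then follows from the uniqueness part of Proposition \ref{prop:linearfull}, applied to the difference $\tilde{\psi}^o(z)-\overline{\tilde{\psi}^o(\mathcal{R}_j z)}$, whose source is the commutator $[\mathcal{L}^\varepsilon,\mathcal{R}_j]\tilde{\psi}^o$; this commutator is supported away from $\xi_j$ (where $\mathcal{R}_j$-equivariance is exact for $L_0^{\mathrm{GL}}$) and is of size $O(\varepsilon\sqrt{|\log\varepsilon|})$, yielding a correction that can be absorbed into $\tilde{\psi}^o$ itself.

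The delicate part is $\hat{\psi}^o$. I would solve the linear projected problem with right-hand side $iV_d\hat{h}^o$, observe that $\|\hat{\psi}^o\|_*\leq C\|\hat{h}^o\|_{**}\leq C|\log\varepsilon|^{-1}$ from Proposition \ref{prop:linearfull}, and then upgrade this to the sharper estimate \eqref{est:prop5.3-1} expressed in the $|\cdot|_\sharp$ seminorm. The upgrade is local: in each disk $B_{R_\varepsilon}(\xi_j)$, write the equation as $L_j^\varepsilon(\hat{\phi}_j)=iw\,\hat{h}^o_j+$ small remainder, and expand both sides in Fourier modes $\sin(k\theta_j),\cos(k\theta_j)$. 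Because $\hat{h}^o$ is $\mathcal{R}_j$-odd, only odd modes $k$ appear. For each odd mode, the radial ODE associated to $L_0^{\mathrm{GL}}$ admits an explicit Green's function, and a source behaving like $r_j^{-1}$ in $|\cdot|_{\sharp\sharp}$ produces a solution of order $r_j\log(2R_\varepsilon/r_j)$, matching exactly the weights in \eqref{normSharp1}--\eqref{normSharp2}. Summing the modes, applying Schauder estimates at the Hölder scale $C^{2,\alpha}(r_j<3)$ to capture the vortex-core contribution, and using the cutoff $\eta_{j,R_\varepsilon/2}$ in \eqref{def-psio}, one obtains the bound $|\hat\psi^o|_\sharp\leq C|\hat h^o|_{\sharp\sharp}$.

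The main obstacle is exactly the commutator between the cutoffs and the lower-order perturbative pieces of $\mathcal{L}^\varepsilon$: substituting a mode-by-mode local solution into the global equation generates a remainder $\mathcal{E}$ that is neither purely $\mathcal{R}_j$-odd nor supported solely in $B_{R_\varepsilon}(\xi_j)$. A direct computation using \eqref{eq:mathL} and the weights in the $|\cdot|_\sharp$ seminorm shows $\|\mathcal{E}\|_{**}\leq C\varepsilon\sqrt{|\log\varepsilon|}(|\hat h^o|_{\sharp\sharp}+\|\hat h^o\|_{**}+\|h-h^o\|_{**})$, the gain $\varepsilon\sqrt{|\log\varepsilon|}$ coming from the prefactor in $S_b,S_c$ (cf.\ Lemmas \ref{errorB}, \ref{errorC}) and the size $R_\varepsilon=\alpha_0(\varepsilon|\log\varepsilon|)^{-1}$. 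Feeding $\mathcal{E}$ back through Proposition \ref{prop:linearfull} produces a small additional contribution already absorbed into the definition of $\tilde{\psi}^o$, and yields the precise decomposition in \eqref{est:prop5.3-1}. The last global inequality $\|\hat{\psi}^o\|_*+\|\tilde{\psi}^o\|_*\leq C(\|h\|_{**}+\|\hat h^o\|_{**}+\|\tilde h^o\|_{**})$ is then immediate from the triangle inequality and the two individual $*$-bounds obtained along the way.
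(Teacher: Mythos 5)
Your overall architecture (split the datum into $h-h^o$, $\hat h^o$, $\tilde h^o$, solve by linearity, and measure the three contributions in different norms) is the same as the paper's, which obtains the proposition by combining Proposition \ref{prop:linearfull}, Lemma \ref{lemma:aprioriSharp} and Lemma \ref{lemma:sharp-even}. However, there are two concrete gaps in your write-up.

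First, the bookkeeping of the contribution coming from the even datum $h-h^o$ is wrong. Since $\mathcal L^\varepsilon$ only approximately commutes with the local reflections $\mathcal R_j$, the solution with even datum $h-h^o$ has a nonzero odd part; this odd part is precisely what produces the term $\varepsilon\sqrt{|\log\varepsilon|}\,\|h-h^o\|_{**}$ in \eqref{est:prop5.3-1}, so it must be assigned to $\hat\psi^o$ and controlled in $|\cdot|_\sharp$. This is exactly the content of the paper's Lemma \ref{lemma:sharp-even}, which splits $\mathcal L^\varepsilon$ into a symmetry-preserving part and a remainder and writes the corresponding solution as $\psi^s+\psi^*$ with $\psi^s$ even and $|\psi^*|_\sharp\le C\varepsilon\sqrt{|\log\varepsilon|}\,\|h\|_{**}$. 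You instead state that this remainder is ``absorbed into the definition of $\tilde\psi^o$''; if it were, the bound \eqref{est:prop5.3-2} would have to read $\|\tilde\psi^o\|_*\le C(\|\tilde h^o\|_{**}+\varepsilon\sqrt{|\log\varepsilon|}\,\|h-h^o\|_{**})$ and the proposition as stated would not follow. Relatedly, your derivation of the local symmetry $\tilde\psi^o(\mathcal R_jz)=\overline{\tilde\psi^o(z)}$ from ``the uniqueness part of Proposition \ref{prop:linearfull} applied to $\tilde\psi^o(z)-\overline{\tilde\psi^o(\mathcal R_jz)}$'' does not work: $\mathcal R_j$ is not a global symmetry of the problem, the reflected function need not satisfy \eqref{eq:eqsymmetriesofpsi}, and the commutator source is not zero, so uniqueness yields nothing. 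In the paper this symmetry is automatic because the odd part is \emph{defined} by the reflection formula \eqref{psiejoj}--\eqref{def-psio}; you should obtain it that way.

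Second, your engine for the sharp estimate $|\hat\psi^o|_\sharp\le C|\hat h^o|_{\sharp\sharp}+\cdots$ is a mode-by-mode Green's function analysis of the radial ODEs, whereas the paper uses a barrier/comparison argument (Lemma \ref{lemma:aprioriSharp}, following the analogous lemma in \cite{DdPMR}). Your route is plausible, but as sketched it is incomplete at the odd mode $k=1$: there the homogeneous radial problem for the linearized Ginzburg--Landau operator admits the bounded solutions generated by $w_{x_1}$ and $w_{x_2}$, so a Green's function representation producing only the growth $r_j\log(2R_\varepsilon/r_j)$ is available only after invoking the orthogonality conditions in \eqref{eq:linear} (equivalently, after accounting for the projection coefficients $c_j^1$, $c_j^2$). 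You never bring these orthogonality conditions into the mode analysis, and without them the claimed bound on the $k=1$ component does not follow.
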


\subsection{First a priori estimate and proof of Proposition~\ref{prop:linearfull}}
\label{sec:prop1}
In this section, our aim is to solve the linear projected problem \eqref{eq:linear}. We first obtain a priori estimates and then use these estimates and the Fredholm alternative to obtain the solution. We first deal with the following problem:
\begin{align}
\label{eq:linearhomogeneous}
\left\{
\begin{aligned}
& \L^\e(\phi)=iV_d h \text{ in } \R^2 \\
& \RE\int_{B(0,4)} \chi_j\overline{\phi_j}w_{x_1}=0, \text{ with }\phi_j(z)=iw(z)\frac{\phi(z+\xi_j^+)}{iV_d(z+\xi_j^+)}, \ j=1,\cdots,n^+,\\
& \psi=\frac{\phi}{iV_d} \text{ satisfies the symmetry }  \eqref{eq:eqsymmetriesofpsi}.
\end{aligned}
\right.
\end{align}
\begin{lemma}\label{FirstEstimate}
There exists a constant $C>0$  such that for all $\e$ sufficiently small and any solution $\phi=iV_d\psi$ of \eqref{eq:linearhomogeneous} with $\|\psi\|_*<\infty$ one has
\begin{align}
\label{est0a}
\|\psi\|_*\leq C\|h\|_{**}.
\end{align}
\end{lemma}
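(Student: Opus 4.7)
The natural approach is a contradiction/blow-up argument, in the spirit of \cite{delPinoKowalczykMusso2006} and \cite{Chiron_Pacherie_2020_a}. Assume the conclusion fails: there exist sequences $\e_n\to 0$, $h_n$ satisfying the symmetry \eqref{eq:eqsymmetriesofpsi}, and solutions $\psi_n$ of \eqref{eq:linearhomogeneous} with
\begin{equation*}
\|\psi_n\|_* = 1, \qquad \|h_n\|_{**}\longrightarrow 0.
\end{equation*}
The goal is to show, piece by piece, that each of the constituent quantities making up $\|\psi_n\|_*$ must tend to zero, contradicting the normalization.

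First I would establish the \emph{inner estimate} near each vortex. On the disk $\{r_j<4\}$ the operator $\L^\e$ coincides, after the translation $\tilde z = z-\xi_j$ and use of \eqref{defL_j}, with $L_j^\e$, which by \eqref{eq:defL_j}–\eqref{eq:estimatesonalpha_j} is a perturbation of the standard Ginzburg--Landau linearized operator $L_0$ around $w$ of size $O(\e\sqrt{|\log\e|})$. By local elliptic $C^{2,\alpha}$ regularity applied to $\phi_{j,n}=iw\,\psi_n(\cdot+\xi_j)$ and the bound on $\|V_d\psi_n\|_{C^{2,\alpha}(r_j<3)}$ embedded in $\|\psi_n\|_*$, after extraction $\phi_{j,n}\to \phi_j^\infty$ in $C^2_{\mathrm{loc}}$, with $L_0\phi_j^\infty = 0$ on $\R^2$. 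Matching with the outer growth permitted by $\|\psi\|_*$ shows $\phi_j^\infty$ has at most mild polynomial growth, so by the non-degeneracy result for $w$ (see e.g.\ \cite{DFK}) it lies in the span of $\{w_{x_1},w_{x_2},iw\}$. The orthogonality hypothesis kills the $w_{x_1}$ component, while the symmetry \eqref{eq:eqsymmetriesofpsi} forces the $w_{x_2}$ and $iw$ components to vanish. Hence $\phi_j^\infty \equiv 0$, i.e.\ $\sum_j\|V_d\psi_n\|_{C^{2,\alpha}(r_j<3)}\to 0$.

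Next comes the \emph{outer estimate}. In the region $\bigcap\{r_j>2\}$ the equation reduces to $L'(\psi)=h$, and I would decompose $\psi=\psi_1+i\psi_2$. The imaginary part satisfies a scalar equation of the form
\begin{equation*}
\Delta\psi_2 - 2|V_d|^2 \psi_2 = \IM(h) + \text{coupling terms involving }\nabla\psi_1,\ \e^2\text{-small corrections},
\end{equation*}
whose mass term provides strong decay and allows comparison with the explicit barrier
\begin{equation*}
\sum_{j=1}^N r_j^{-2+\sigma}+\e^{\sigma-2},
\end{equation*}
matching exactly the weights in $\|\psi_2\|_{2,*}$. The real part satisfies an equation of the form
\begin{equation*}
\Delta\psi_1 + 2\,\RE\!\Bigl(\tfrac{\nabla V_d}{V_d}\Bigr)\nabla\psi_1 + \text{lower-order terms in } \psi_2 = \RE(h),
\end{equation*}
which is essentially a perturbation of the Laplacian, and thus controlled via harmonic-like barriers using $\sum r_j^{-1}$ (for $|\nabla\psi_1|$) and $L^\infty$ bounds in the far field, exactly the weights appearing in $\|\psi_1\|_{1,*}$. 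Because the real part only enters the imaginary equation through $\nabla\psi_1$ and vice-versa through $\psi_2$, the two barriers decouple asymptotically and give $\|\psi_{1,n}\|_{1,*}+\|\psi_{2,n}\|_{2,*}\to 0$. In the \emph{very far} region $r>1/\e$ one switches to the rescaled variables $(\e r,s)$ where the terms multiplying $\e^2|\log\e|\p_s\psi$ and $\e^2\Delta$ become of unit order; here a separate Schauder estimate on a unit cell gives the $|\p_r\psi_i|$ and $|\p_s\psi_i|$ decay built into $\|\cdot\|_*$.

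The main obstacle is the interplay between these three regions (near-vortex, intermediate, and $r\gtrsim 1/\e$) together with the $\log$-sensitive outer growth of $\psi_1$: the estimate must be uniform across the transition at $r_j\sim R_\e = \alpha_0/(\e|\log\e|)$ so that the barrier constants do not blow up. I expect to treat this by choosing the barriers piecewise and matching them through the cut-off $\eta_{j,R_\e}$ introduced in \eqref{def:etajR}, using the elliptic comparison principle on each annulus and absorbing cross-terms via the smallness $\e\sqrt{|\log\e|}$. Once the contradiction is reached, existence and uniqueness in Proposition~\ref{prop:linearfull} follow by Fredholm alternative applied to $\L^\e$ in the weighted space corresponding to $\|\cdot\|_*$, modulo the finite-dimensional space spanned by $\chi_j w_{x_1}/(iw(\cdot-\xi_j^+))$.
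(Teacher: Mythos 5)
Your plan is correct and follows essentially the same route as the paper (which itself defers to the analogous Lemma 5.1 of \cite{DdPMR}): an inner estimate near each vortex using the non-degeneracy of the linearized Ginzburg--Landau operator together with the symmetry and orthogonality conditions, and outer comparison/barrier arguments for $\psi_1$ and $\psi_2$ separately with weights matching the $\|\cdot\|_*$-norm, closed by a compactness--contradiction step. The only cosmetic differences are that the paper organizes the argument as a direct a priori comparison estimate (with the contradiction folded in at the end to remove the $\|\psi_i\|_{L^\infty(B_R(\xi_j))}$ terms from the barrier constants), uses the angular barrier $\theta_1(\pi-n\theta_1/2)$ for the real part, and isolates the single genuinely new term $ic\e^2|\log\e|\,\p_s\psi$ relative to the Ginzburg--Landau case.
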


\begin{proof}[Proof of Lemma~\ref{FirstEstimate} ]
The proof follows as in \cite[Lemma 5.1]{DdPMR} by using barrier arguments, so we will only highlight the differences. 

Near the vortices the argument remains essentially the same as a consequence of Lemma \ref{lem:ellipticestimatesL0}. Far from them, in the region $\bigcap_{j=1}^{n^+}\{r_j>2\}$, the function $\psi=\psi_1+i\psi_2$ solves 
\begin{align*}
h=
\Delta\psi  + 2 \frac{\nabla V_d\nabla \psi}{V_d}
-2 i |V_d |^2 \psi_2
+\varepsilon^2 \p^2_{ss}\psi
+ 
\varepsilon^2 
\Bigl(
2\frac{\partial_{s}V_d}{V_d}
-2in
\Bigr)
\p_s\psi +ic \e^2 |\log \e| \p_s \psi,
\end{align*}
and thus we only need to deal with the new term
$$ic \e^2|\log\e|\p_s\psi.$$
It can be seen that, for $R$ large and some $1\leq j\leq N$,
\begin{equation*}\begin{split}
|c \e^2 |\log \e| \p_{s} \psi_1| & \leq C (R^{-\sigma'}+\e^{\sigma'})\left(\frac{1}{r_j^{2-\sigma}}+\e^{2-\sigma}\right)\|\psi_1\|_{1,*,0},\\
|c \e^2 |\log\e|\p_{s}\psi_2|&\leq C\left(\e^{1-\sigma''}+R^{\sigma''-1}\right)\left(\frac{1}{r_j^2+\e^2}\right)\|\psi_2\|_{2,*,0},
\end{split}\end{equation*}
for some \(0<\sigma'<\sigma<1\), \(0<\sigma''<1\), where
\begin{align*}
\|\psi_1\|_{1,*,0} 
:=&\, 
\sum_{j=1}^N \|\psi_1\|_{L^\infty(r_j>2)}+\sup_{2< r_j<\frac{2}{\e}, 1\leq j\leq N}|\nabla\psi_1|\left(\sum_{j=1}^Nr_j^{-1}\right)^{-1}+\sup_{r>\frac{1}{\e}}\left(\frac{1}{\e}|\p_r\psi_1|+|\p_s\psi_1|\right)
\\
\|\psi_2\|_{2,*,0} 
:=&\,\sup_{r_j>2,\,1\leq j\leq N}|\psi_2|\left(\sum_{j=1}^N r_j^{-2+\sigma}\right)^{-1}+\sup_{2< r_j<\frac{2}{\e}, 1\leq j\leq N}|\nabla\psi_2|\left(\sum_{j=1}^Nr_j^{-2+\sigma}\right)^{-1}\\
&+\sup_{r>\frac{1}{\e}}\left(\e^{\sigma-2}|\p_r\psi_1|+\e^{\sigma-1}|\p_s\psi_1|\right),
\end{align*}
for some $0<\sigma<1$. Thus the result follows by comparison arguments choosing respectively the barriers 
\begin{equation*}\begin{split}
\mathcal B_1 &:= M_1 \theta_1 \left(\pi-\frac{n\theta_1}{2}\right),\qquad M_1 := C\left( \|h \|_{**,0} + \e^{1-\sigma''}+R^{\sigma''-1}+ \|\psi_1\|_{L^\infty(B_R(\xi_j))}\right),\\
\mathcal{B}_2 &:=M_2 \left( \frac{1}{r_j^{2-\sigma}}+\e^{2-\sigma} \right),\qquad M_2:=C\left(\|h\|_{**,0}+R^{-\sigma'}+\varepsilon^{\sigma'} +\|\psi_2\|_{L^\infty(B_R(\xi_j))} \right),
\end{split}\end{equation*}
with $C$ a large fixed constant and
\begin{equation*}\begin{split}
\|h\|_{**,0}:=&\,\sum_{j=1}^N \| V_d h\|_{L^\infty(r_j<3)}
+ \sup_{r_j>2,\,1\leq j\leq N}|\RE(h)|\left(\sum_{j=1}^Nr_j^{-2}+\e^2\right)^{-1}\\
&+|\IM(h)|\left(\sum_{j=1}^Nr_j^{-2+\sigma}+\e^{2-\sigma}\right)^{-1},
\end{split}\end{equation*}
for $0<\sigma<1$.
\end{proof}

\begin{proof}[Proof of Proposition~ \ref{prop:linearfull}]
The result follows as a consequence of the Riesz representation theorem and the Fredholm alternative proceeding as in \cite[Proposition 5.1]{DdPMR}, that is, rewriting the problem as 
\begin{equation*}
[\phi,\varphi]-\langle k(x)\phi,\varphi \rangle =\langle s,\varphi\rangle, \quad\forall \varphi \in \mathcal{H},
\end{equation*}
where $\mathcal{H}$ is the Hilbert space
\begin{equation*}\begin{split}
\mathcal{H}:=& \Bigl\{\phi=iV_d\psi \in H^1_0(B_M(0),\mathbb{C}); \;\RE \int_{B(0,4)} \chi\overline{\phi}_jw_{x_1}=0, \ j=1,\cdots,n^+, \;\;\psi \text{ satisfies \eqref{eq:eqsymmetriesofpsi}}\Bigr\},
\end{split}\end{equation*}
for $M>10|\xi_1|$, equipped with the inner product
\begin{equation*}
[\phi,\varphi]:= \RE \int_{B(0,M)} \left( \nabla \phi \overline{\nabla\varphi} +\e^2\p_s \phi \overline{\p_s \varphi} \right).
\end{equation*}
Here $\phi_j(z):=iw(z)\psi(z+\xi_j)$ and $\xi_j(z):=\eta_1\left(\frac{|z-\xi_j|}{2}\right)$.
Using \eqref{defL_j}, $\langle k(x)\phi,\cdot\rangle$, $\langle s,\cdot\rangle$ correspond to the linear forms 
\begin{equation*}\begin{split}
\langle k(x)\phi,\varphi \rangle:=& \,\e^2\RE\int_{B(0,M)}\left(2ni\phi\overline{\p_s\varphi}-n^2\phi \overline{\varphi} \right)-2\RE  \int_{B(0,M)} \RE(\overline{\phi}V_d)V_d \overline{\varphi}  \\
&\,+\RE \int_{B(0,M)} [(\eta-1)\frac{E}{V_d}+(1-|V_d|^2)]\phi \overline{\varphi}+c \e^2|\log\e|\RE \int_{B(0,M)} (in\phi\overline{\varphi}-\phi\overline{\p_s\varphi}),\\
\langle s,\varphi \rangle:= &\RE \int_{B(0,M)}\left(h+iV_d\sum_{j=1}^{n^+}c_j\chi_j\frac{w_{x_1}(z-\xi_j)}{iw(z-\xi_j)}\right) \overline{\varphi}.
\end{split}\end{equation*}
defined on $\mathcal{H}$.

The rest of the proof follows as in \cite[Proposition 5.1]{DdPMR}.
\end{proof}

\subsection{Second a priori estimate and proof of Proposition \ref{prop:sharp2b}}
\label{sec:prop2}

\begin{lemma}
\label{lemma:aprioriSharp}
Let $\alpha \in (0,1)$, $\sigma \in (0,1)$.
Then  there exists a constant $C>0$  such that for all $\e$ sufficiently small and any solution  $\phi$ of \eqref{eq:linearhomogeneous} with $\phi=iV_d\psi$ and $\|\psi\|_*<\infty$  one has
\begin{equation}
\label{claimSharp}
|\psi|_\sharp \leq C(  |h |_{\sharp\sharp} + \varepsilon \sqrt{ | \log\varepsilon|} \|h\|_{**} )  .
\end{equation}
\end{lemma}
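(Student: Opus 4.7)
The bound \eqref{claimSharp} sharpens Lemma~\ref{FirstEstimate}: the right-hand side now depends on $|h|_{\sharp\sharp}$, a norm that permits much slower decay than $\|\cdot\|_{**}$, together with a small $\e\sqrt{|\log\e|}$-multiple of $\|h\|_{**}$ that will absorb the tails of $h$ not seen by $|\cdot|_{\sharp\sharp}$. My plan is to combine three ingredients: the global bound $\|\psi\|_*\leq C\|h\|_{**}$ from Lemma~\ref{FirstEstimate}; interior Schauder estimates for $\phi=iV_d\psi$ near each vortex; and barrier/comparison arguments in the annular regions $\{2<r_j<R_\e\}$. Since $R_\e=\alpha_0/(\e|\log\e|)\ll d_\e$, these annuli are disjoint and each can be analysed separately using the translated operator $L_j^\e$ from \eqref{eq:defL_j}, which differs from $L_0$ only by terms of size $O(\e\sqrt{|\log\e|})$ thanks to \eqref{eq:estimatesonalpha_j}.

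Writing $\psi=\psi_1+i\psi_2$, the linearised operator \eqref{eq:mathL} splits into two real equations: the imaginary part carries a mass term $-2|V_d|^2\psi_2$, while the real part does not. For $\psi_2$ this mass yields an effective Yukawa-type operator, and a barrier of the form $\mathcal{B}_2=M_2\sum_j(r_j^{-1+\sigma}+r_j^{-1}\log(2R_\e/r_j))$ with $M_2\sim|h|_{\sharp\sharp}$ closes the maximum principle: the outer boundary at $r_j=R_\e$ contributes negligibly because of exponential damping. For $\psi_1$ no mass is available, and I would use the barrier $\mathcal{B}_1=M_1\sum_j r_j\log(2R_\e/r_j)$, whose Laplacian equals $M_1\sum_j(\log(2R_\e/r_j)-2)/r_j$ and thus dominates a source controlled by $C|h|_{\sharp\sharp}\sum_j r_j^{-1}$. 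The constant $M_1$ has to absorb not only this source but also the boundary values at $r_j=2$ (of size $|h|_{\sharp\sharp}$ after the Schauder step) and at $r_j=R_\e$ (where $|\psi_1|\leq C\|h\|_{**}$ by Lemma~\ref{FirstEstimate}). The latter matching, combined with the gradient estimate $|\nabla\psi_1|\leq C\|h\|_{**}/r_j$ valid for $r_j<2/\e$ from the $*$-norm and integrated across a shell of width comparable to $1/(\e\sqrt{|\log\e|})$, produces the desired size $M_1\leq C(|h|_{\sharp\sharp}+\e\sqrt{|\log\e|}\|h\|_{**})$; the $\sqrt{|\log\e|}$ factor, rather than $|\log\e|$, arises from this extra integration together with the scaling $R_\e^{-1}\sim\e|\log\e|$.

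With $|\psi_1|_{\sharp,1}$ and $|\psi_2|_{\sharp,2}$ under control, one obtains an improved $L^\infty$ bound on $V_d\psi$ in $3<r_j<4$, which substituted in the interior Schauder estimate for $\phi_j=iw\tilde\psi$ (a solution of $L_j^\e\phi_j\approx iwh(\cdot+\xi_j)$, modulo the small perturbation from $\alpha_j$) yields $\|V_d\psi\|_{C^{2,\alpha}(r_j<3)}\leq C|\log\e|(|h|_{\sharp\sharp}+\e\sqrt{|\log\e|}\|h\|_{**})$. Multiplying by $|\log\e|^{-1}$ and adding to the barrier bounds produces \eqref{claimSharp}. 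The main obstacle is the $\psi_1$ step: since this component has no mass, the boundary value at $r_j=R_\e$ governs the size of $M_1$, and one must extract precisely the factor $\e\sqrt{|\log\e|}$ instead of the naive $\e|\log\e|$, since any weaker bound would preclude the subsequent Lyapunov--Schmidt adjustment of $\hat d_\e$ whose effect is exactly of order $\e\sqrt{|\log\e|}$.
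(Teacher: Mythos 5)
Your overall architecture (Lemma~\ref{FirstEstimate} for the outer boundary data, Schauder estimates near the vortices, barriers in the disjoint annuli $\{2<r_j<R_\e\}$, and the structural split between the massless component $\psi_1$ and the component $\psi_2$ carrying the term $-2|V_d|^2\psi_2$) is the same as the paper's, which itself runs the barrier scheme of \cite[Lemma 5.2]{DdPMR} and only verifies that the new term $ic\e^2|\log\e|\p_s\psi$ fits into it. However, your treatment of $\psi_1$ contains a genuine gap. The comparison function $\mathcal B_1=M_1\sum_j r_j\log(2R_\e/r_j)$ has $\Delta\bigl(r_j\log(2R_\e/r_j)\bigr)=(\log(2R_\e/r_j)-2)/r_j>0$ on all of $\{2<r_j<2R_\e e^{-2}\}$, i.e.\ it is a \emph{subsolution} there; the maximum principle applied to $\psi_1-\mathcal B_1$ with $\Delta(\psi_1-\mathcal B_1)\leq 0$ yields a lower bound for $\psi_1-\mathcal B_1$, not the upper bound $|\psi_1|\leq\mathcal B_1$ you need. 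A supersolution must satisfy $\Delta\mathcal B_1\leq -C|h|_{\sharp\sharp}/r_j$, and no radial barrier of the shape $r_j\log(2R_\e/r_j)$ does this; the growth rate $r_j\log(2R_\e/r_j)$ is the profile of the mode-one particular solution, so the estimate has to be extracted mode by mode (or via a flux/divergence-structure argument for $\nabla\psi_1$, using $\dive(|V_d|^2\nabla\psi_1)=|V_d|^2h_1+\dots$), not from a single radial comparison.

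Your accounting of the factor $\e\sqrt{|\log\e|}$ is also not substantiated. Matching the barrier to the boundary value $|\psi_1|\leq C\|h\|_{**}$ at $r_j=R_\e$ against the weight $r_j\log(2R_\e/r_j)$ gives a constant of order $\|h\|_{**}/R_\e\sim\e|\log\e|\,\|h\|_{**}$, which is \emph{larger} than the claimed $\e\sqrt{|\log\e|}\,\|h\|_{**}$, and the ``shell of width $1/(\e\sqrt{|\log\e|})$'' you invoke lies outside the region $r_j<R_\e$ where the $\sharp$-norm is measured. The correct origin of $\e\sqrt{|\log\e|}$ is the inter-vortex coupling: for $l\neq j$ one has $|\nabla w^l/w^l|\sim 1/d_\e=\e\sqrt{|\log\e|}/\hat d_\e$, and together with the $\e^2$-terms these produce sources of size $\e\sqrt{|\log\e|}\,\|h\|_{**}\,r_j^{-1}$ which feed the profile $r_j\log(2R_\e/r_j)$ with the right constant. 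Relatedly, you take $M_2\sim|h|_{\sharp\sharp}$ as if the two components decouple, whereas the paper's barrier $\tilde{\mathcal B}_2$ for $|\nabla\psi_2|$ explicitly carries $|\psi_1|_{\sharp,1}$ (and conversely the $\psi_1$ equation is sourced by $\IM(\nabla V_d/V_d)\cdot\nabla\psi_2$), so the two estimates form a coupled system that must be closed together. Finally, the one ingredient that is genuinely new in this paper relative to \cite{DdPMR} --- checking that $c\e^2|\log\e||\p_s\psi_1|\leq C r_j^{-1}\log(2R_\e/r_j)\,|\psi_1|_{\sharp,1}$ and the analogous bound for $\p_s\psi_2$, which is the actual content of the paper's proof --- is absent from your argument.
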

\begin{proof}
The result follows as a consequence of Lemma \ref{FirstEstimate} and a barrier argument. Indeed, writting $\psi=\psi_1+i\psi_2$, it can be checked that, for some $1\leq j\leq N$,
\begin{equation*}\begin{split}
|c \e^2 |\log \e| \p_{s} \psi_1| &\leq \frac{C}{r_j}\log \left(\frac{2R_\e}{r_j}\right)|\psi_1|_{\sharp,1},
\end{split}\end{equation*}
and consequently, proceeding as in \cite[Lemma 5.2]{DdPMR},
$$|\nabla \psi_2|\leq \tilde{\mathcal B}_2,$$
with 
\begin{align*}
\tilde{\mathcal{B}}_2 := 
\frac{C}{r_j^{1-\sigma}} ( 
|h|_{\sharp\sharp,0} 
+\|\psi_2\|_{L^\infty(r_j=R_{\varepsilon})}  )
+ \frac{C}{r_j}\log\Bigl(\frac{2R_\varepsilon}{r_j}\Bigr)
\left(  |\psi_1|_{\sharp,1} 
+
\frac{\|\psi_2\|_{L^\infty(r_j=R_0)} }{ |\log\varepsilon |}
\right),
\end{align*}
with $R_0>0$ a large fixed constant, $|\ |_{\sharp,1}$ defined in \eqref{normSharp1}
and, denoting $h=h_1+ih_2$,
\begin{align*}
| h |_{\sharp \sharp,0}
&:=
\sum_{j=1}^N  \| V_d h\|_{L^\infty(r_j<4)}
+\sup_{2<r_j<R_\e,\,1\leq j\leq N}\left[|h_1|\left(\sum_{j=1}^Nr_j^{-1}\right)^{-1}+|h_2|\left(\sum_{j=1}^Nr_j^{-1+\sigma}\right)^{-1}\right],
\end{align*}
$0<\sigma<1$. Therefore, 
\begin{equation*}\begin{split}
|c \e^2|\log \e| \p_{s} \psi_2| &\leq 
\frac{C}{r_j|\log\e|}
\tilde{\mathcal B}_2,
\end{split}\end{equation*}
and the result follows as a straightforward adaptation of \cite[Lemma 5.2]{DdPMR}.
\end{proof}

Before proving Proposition~\ref{prop:sharp2b} we consider the  solution constructed in Proposition~\ref{prop:linearfull} when the right hand side has symmetries. More precisely, 
let us consider the local symmetry condition
\begin{align}
\label{symmetryhe}
h(\mathcal{R}_j z ) = - \overline{h(z)}  ,\quad 
|z-\xi_j| < 2 R_\varepsilon , \quad j=1,\cdots,N,
\end{align}
where $\mathcal{R}_j $ was defined in \eqref{def:reflection_R_j}.

\begin{lemma}
\label{lemma:sharp-even}
Suppose that $h$ satisfies the symmetries \eqref{eq:eqsymmetriesofpsi} and \eqref{symmetryhe}.
We assume that
\[
\| h \|_{**} <\infty.
\]
Then there exist  $\psi^s $, $ \psi^*$ such that  
$\psi=\frac{\phi}{iV_d}$ with $\phi$ the solution to \eqref{eq:linear} and $\|\psi\|_*<\infty$
can be written as $\psi = \psi^s + \psi^*$ with the estimates
\begin{align*}
\| \psi^s \|_* + \| \psi^* \|_* &\leq C  \| h \|_{**} 
\\
| \psi^* |_\sharp &\leq C  \varepsilon \sqrt{ |\log\varepsilon|}   \| h \|_{**} .
\end{align*}
Moreover $(\psi^s, \psi^*)$ define  linear operators of $h$,  $\psi^s$ has its
support in $\bigcup_{j=1}^N B_{R_\varepsilon}(\xi_j)$ 
and satisfies
\begin{align}
\label{symmetryPsyE}
\psi^s(\mathcal{R}_j z ) &= - \overline{\psi^s(z)} ,\quad 
|z-\xi_j| <  R_\varepsilon, \quad 1\leq j\leq N.
\end{align}
\end{lemma}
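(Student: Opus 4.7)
\medskip
\noindent \textbf{Proof proposal.} The plan is to define $\psi^s$ as the localized $\mathcal{R}_j$-even component of $\psi$ around each vortex, so that $\psi^*$ captures the odd (symmetry-breaking) contribution, and then to obtain the sharp bound on $\psi^*$ by feeding a carefully computed commutator into Lemma~\ref{lemma:aprioriSharp}. Concretely, let $\phi=T_\varepsilon(iV_dh)$ and $\psi=\phi/(iV_d)$, so Proposition~\ref{prop:linearfull} yields $\|\psi\|_*\leq C\|h\|_{**}$. Following \eqref{psiejoj}, near each $\xi_j$ set $\psi^{e,j}(z)=\tfrac12\bigl[\psi(z)-\overline{\psi(\mathcal{R}_jz)}\bigr]$ and define globally
\[
\psi^s:=\sum_{j=1}^N \eta_{j,R_\varepsilon/2}\,\psi^{e,j},\qquad \psi^*:=\psi-\psi^s.
\]
By construction $\psi^s$ has support in $\bigcup_j B_{R_\varepsilon}(\xi_j)$, depends linearly on $h$, and satisfies $\psi^s(\mathcal{R}_jz)=-\overline{\psi^s(z)}$ on $B_{R_\varepsilon}(\xi_j)$, giving \eqref{symmetryPsyE}.

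\medskip
The pointwise bound $|\psi^{e,j}(z)|\leq \tfrac12(|\psi(z)|+|\psi(\mathcal{R}_jz)|)$ together with analogous bounds for derivatives and Hölder seminorms, and the fact that $\mathcal{R}_j$ preserves the family of distances $r_k=|z-\xi_k|$ up to a bounded comparison inside $B_{R_\varepsilon}(\xi_j)$ (because $|\xi_j-\mathcal{R}_j\xi_k|\sim|\xi_j-\xi_k|\gtrsim d_\varepsilon\gg R_\varepsilon$ for $k\neq j$), give $\|\psi^s\|_*\leq C\|\psi\|_*$; combined with Proposition~\ref{prop:linearfull} this yields $\|\psi^s\|_*+\|\psi^*\|_*\leq C\|h\|_{**}$.

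\medskip
It remains to prove $|\psi^*|_\sharp\leq C\varepsilon\sqrt{|\log\varepsilon|}\|h\|_{**}$. Let $\sigma_j\psi(z):=-\overline{\psi(\mathcal{R}_jz)}$, so that inside $B_{R_\varepsilon/2}(\xi_j)$ one has $\psi^*=\psi^{o,j}=\tfrac12(\psi-\sigma_j\psi)$, and by \eqref{symmetryhe} also $\sigma_jh=h$ on $B_{2R_\varepsilon}(\xi_j)$. Applying $\mathcal L^\varepsilon$ and using that $\mathcal L^\varepsilon(\sigma_j\psi)=\sigma_j(\mathcal L^\varepsilon\psi)+[\mathcal L^\varepsilon,\sigma_j]\psi$, we obtain on $B_{2R_\varepsilon}(\xi_j)$ the identity $\mathcal L^\varepsilon(\psi^{o,j})=\tfrac12[\mathcal L^\varepsilon,\sigma_j]\psi$. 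After multiplying by $\eta_{j,R_\varepsilon/2}$, summing in $j$, and absorbing into the Lyapunov--Schmidt multipliers the projections that the symmetry of $\psi^*$ already makes orthogonal, we find that $\psi^*$ solves a problem of the type \eqref{eq:linearhomogeneous} with right-hand side $h^*$ consisting of (i) the localized commutator $[\mathcal L^\varepsilon,\sigma_j]\psi$, and (ii) cut-off error terms supported in the annulus $\{R_\varepsilon/2<r_j<R_\varepsilon\}$ involving $\nabla\eta_{j,R_\varepsilon/2}$ and $\Delta\eta_{j,R_\varepsilon/2}$. Both types are then measured in $|\cdot|_{\sharp\sharp}$, after which Lemma~\ref{lemma:aprioriSharp} closes the estimate.

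\medskip
\noindent\textbf{Main obstacle.} The crux is bounding the commutator in $|\cdot|_{\sharp\sharp}$ by $C\varepsilon\sqrt{|\log\varepsilon|}\|h\|_{**}$, which is a full factor $d_\varepsilon=\hat d_\varepsilon/(\varepsilon\sqrt{|\log\varepsilon|})$ better than the naive estimate $\varepsilon^2|\log\varepsilon|\|\psi\|_*$ obtained by reading off the coefficients of the asymmetric parts of $\mathcal L^\varepsilon$ (the operators $\varepsilon^2\partial_{ss}^2$, $-2in\varepsilon^2\partial_s$ and $ic\varepsilon^2|\log\varepsilon|\partial_s$, together with the non-$\mathcal{R}_j$-symmetric pieces of $2\nabla V_d\nabla\psi/V_d$ and of $|V_d|^2$). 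The expected gain comes from writing the angular derivative $\partial_s$, taken with respect to the origin, in the translated variables $z-\xi_j=\rho_je^{i\theta_j}$: modulo $\mathcal{R}_j$-symmetric terms one gets $\partial_s=\partial_{\theta_j}+O(d_\varepsilon/\rho_j)$-type pieces whose antisymmetric part vanishes at $\xi_j$ and grows as one moves away, trading a factor $\varepsilon^2|\log\varepsilon|$ for $\varepsilon^2|\log\varepsilon|\cdot d_\varepsilon=\varepsilon\sqrt{|\log\varepsilon|}$. Tracking this gain simultaneously in all pieces of the commutator and in the $|\cdot|_{\sharp\sharp}$ norm (whose weight $r_j^{-1}$, respectively $r_j^{-1+\sigma}$, is precisely tuned to match this scaling), as well as controlling the cut-off boundary terms on $\{r_j\sim R_\varepsilon\}$ where $\psi$ is only controlled through $\|\psi\|_*$, is the delicate bookkeeping step that will require the same care as in the analogous argument of \cite{DdPMR}.
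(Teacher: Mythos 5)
Your proposal is correct and takes essentially the same route as the paper: you decompose $\psi$ into its localized $\mathcal{R}_j$-even part $\psi^s$ and the remainder $\psi^*$, observe that near each $\xi_j$ the odd part solves a projected problem whose right-hand side is the anti-symmetric part of $\mathcal{L}^\e$ acting on $\psi$, and close the estimate with Lemma~\ref{lemma:aprioriSharp}. Your commutator $[\mathcal{L}^\e,\sigma_j]$ is exactly the paper's explicitly written remainder operator $\mathcal{L}^\e_{r,j}$ (the part of $\mathcal{L}^\e$ anticommuting with the reflection), whose coefficients --- the cross-vortex interaction terms $2\sum_{l\neq j}\nabla w(z-\xi_l)\nabla\psi/w(z-\xi_l)$ together with the $d_\e$-linear pieces produced by $\e^2\partial_{ss}^2$ and $\e^2|\log\e|\partial_s$ in the translated variables --- are precisely of the size $\e\sqrt{|\log\e|}$ that you identify.
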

\begin{proof}
The proof follows analogously to \cite[Lemma 5.3]{DdPMR} by splitting $\mathcal{L}^\e$ into a part $\mathcal{L}^\e_{s,j}$ that preserves the symmetry \eqref{symmetryPsyE} and a remainder term $\mathcal{L}^\e_{r,j}$, for every $1\leq j\leq N$. Indeed, we consider the linear operators
\begin{align*}
L'_{s,j}(\psi)
&:=
\Delta \psi 
+ 2\frac{\nabla w(z-\xi_j) \nabla \psi}{w(z-\xi_j)}
-2i |w(z-\xi_j)|^2 \IM(\psi)
\\
&
+\varepsilon^2 \Biggl[ d_{\e}^2 \partial_{r_j r_j}^2 \psi \sin^2(\theta_j-\varphi_j)
+d_{\e}^2 \cos(\theta_j-\varphi_j)\sin(\theta_j-\varphi_j)\left(\frac{2\p^2_{r_j\theta_j}\psi}{r_j}-\frac{2\p_{\theta_j}\psi}{r_j^2} \right)
\\
& 
+\partial_{\theta_j \theta_j}^2 \psi \Bigl(1 + \frac{d_{\e}^2}{r_j^2} \cos^2(\theta_j-\varphi_j) \Bigr)
+ \partial_{r_j} \psi  \frac{d_{\e}^2}{r_j} \cos^2(\theta_j-\varphi_j) 
-2 \partial_{\theta_j} \psi 
\frac{d_{\e}^2}{r_j^2} \sin(\theta_j-\varphi_j)\cos(\theta_j-\varphi_j) 
\Biggr],
\end{align*}
and
\begin{align*}
L'_{r,j}&(\psi)
:=2\sum_{l\neq j }\frac{\nabla w(z-\xi_l) \nabla \psi}{w(z-\xi_l)}
-2 i ( |V_d|^2 -  |w(z-\xi_j)|^2 )  \IM(\psi) 
\\
& 
+ \varepsilon^2
\Biggl[
2 d_{\e}
\partial_{r_j \theta_j}^2 \psi  
 \sin(\theta_j-\varphi_j) 
+2 \partial_{ \theta_j\theta_j}^2 \psi  \frac{d_{\e}}{r_j} \cos(\theta_j-\varphi_j) 
+ \partial_{r_j} \psi  d_{\e} \cos(\theta_j-\varphi_j)
-  \partial_{\theta_j} \psi
\frac{d_{\e}}{r_j} \sin(\theta_j-\varphi_j) 
\Biggr]
\\
& 
+ 
\varepsilon^2 \Bigl( \frac{2\partial_s V_d }{V_d}
- 2ni  +ic_\varepsilon|\log\e|\Bigr) 
\Bigl[  \partial_{r_j} \psi d_{\e} \sin(\theta_j-\varphi_j)
+  \Bigl(1 + \frac{d_{\e}}{r_j} \cos(\theta_j-\varphi_j) \Bigr)\partial_{\theta_j}\psi   \Bigr].
\end{align*}
We also define 
\begin{equation*}
L_{0,s,j}(\phi):=iV_dL'_{s,j}(\psi)+i(E-E^o)\psi, \quad L_{0,r,j}(\phi):=L_0(\phi)-L_{0,s,j}(\phi)
\end{equation*}
where \(E^o\) is defined analogously to \eqref{def-psio}. We then set
\begin{equation*}
\L^\e_{s,j}(\phi):=\eta L_{0,s,j}(\phi) +(1-\eta)iV_d L'_{s,j}(\psi), \quad \phi=iV_d \psi,
\end{equation*}
\begin{equation*}
\L^\e_{r,j}(\phi):= \L^\e (\phi)- \L^\e_{s,j}(\phi).
\end{equation*}
The rest of the proof follows as in \cite[Lemma 5.3]{DdPMR} by applying Proposition \ref{prop:linearfull} and Lemma \ref{lemma:aprioriSharp}.
\end{proof}
As a consequence of these results we can conclude the statement of Proposition~\ref{prop:sharp2b}.
\begin{proof}[Proof of Proposition~\ref{prop:sharp2b}]
The result follows by putting together Proposition~\ref{prop:linearfull}, Lemma~\ref{lemma:aprioriSharp} and Lemma \ref{lemma:sharp-even}.
\end{proof}

Once we have established the solvability and the a priori estimates for the projected linear problem \eqref{eq:linear} we can handle the non linear case \eqref{eq:nonlinear}.

\begin{proposition}\label{prop:nonlinear}
There exists a constant $C>0$ depending only on $0<\alpha,\sigma<1$, such that for all $\e$ sufficiently small there exists a unique $\psi_\e$ such that \(\phi_\e=iV_d\psi_\e\) is the solution of \eqref{eq:nonlinear}, that satisfies
\begin{equation*}
\|\psi_\e\|_*\leq \frac{C}{|\log \e|}.
\end{equation*}
Furthermore $\psi_\e$ is a continuous function of the parameter $\hat{d}_\e:=\e\sqrt{|\log \e|}d_\e$
\begin{eqnarray}\label{estimatesPhi}
|\psi_\e^o|_{\sharp} \leq  C \e \sqrt{|\log \e|},
\end{eqnarray}
where $\psi_\e^o$ is defined according to \eqref{def-psio}.
\end{proposition}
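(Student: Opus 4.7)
The plan is to recast \eqref{eq:nonlinear} as a fixed-point equation for the map
$$\mathcal{T}(\psi) := \frac{1}{iV_d}\, T_\e\bigl(-E + N(iV_d\psi)\bigr),$$
where $T_\e$ is the linear solution operator from Proposition~\ref{prop:linearfull}. Any fixed point of $\mathcal{T}$ in the $\|\cdot\|_*$ ball produces a solution of \eqref{eq:nonlinear} with the Lagrange multipliers automatically generated by $T_\e$. The symmetries \eqref{eq:eqsymmetriesofpsi} are preserved along the iteration because $V_d$, the cut-off $\eta$, and each term in \eqref{def:N} (including the localized analytic correction $M(\phi)$) respect them. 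Proposition~\ref{globalError} gives $\|R\|_{**} \leq C/|\log\e|$, so once the nonlinearity is controlled quadratically, the a~priori bound of Proposition~\ref{prop:linearfull} closes the loop.

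The technical heart of the argument is the quadratic estimate
$$\|N(iV_d\psi)\|_{**} \leq C\|\psi\|_*^2 \qquad \text{for } \|\psi\|_* \leq 1.$$
For $-M(\phi)$, supported where $\eta \neq 0$, the bound $|M(\phi)| \leq C\|\phi\|_{C^1(B^2)}^2$ from Lemma~\ref{lem:formulation} combined with the $C^{2,\alpha}(r_j<3)$ control on $V_d\psi$ immediately yields a $\|\cdot\|_{**}$ bound. For the far-field terms $(1-\eta)iV_d\bigl[i(\nabla\psi)^2 + i\e^2(\p_s\psi)^2 - i|V_d|^2(e^{-2\IM(\psi)}-1+2\IM(\psi))\bigr]$, one expands the exponential, separates real and imaginary parts, and compares each quadratic combination of $\psi_1,\psi_2,\nabla\psi_1,\nabla\psi_2$ against the weights $r_j^{-2}+\e^2$ and $r_j^{-2+\sigma}+\e^{\sigma-2}$ of \eqref{def:norm_**}, with the Hölder seminorms at intermediate radii following from a standard product estimate. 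Matching these anisotropic weights to the output of squaring $\psi$ and its derivatives is the principal obstacle, but the definition of $\|\cdot\|_*$ has been tailored for exactly this purpose. Once this estimate is in hand, restricting $\mathcal{T}$ to $\{\|\psi\|_* \leq K/|\log\e|\}$ for $K$ large yields $\|\mathcal{T}(\psi)\|_* \leq C/|\log\e| + CK^2/|\log\e|^2$ together with a matching contraction bound, so Banach's theorem produces a unique $\psi_\e$ with $\|\psi_\e\|_* \leq C/|\log\e|$.

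Continuity of $\psi_\e$ in $\hat d_\e$ follows from the continuous dependence of $V_d$, $E$, $\eta$, and the projection data on this parameter, together with the uniform contraction constant. For the refined odd-mode estimate, I would apply Proposition~\ref{prop:sharp2b} to $\psi_\e$ with $h_\e := R - N(iV_d\psi_\e)/(iV_d)$. Lemma~\ref{lemma:decomposition} supplies the decomposition $R^o = \hat R^o + \tilde R^o$ with $|\hat R^o|_{\sharp\sharp} \leq C\e/\sqrt{|\log\e|}$ and $\|\tilde R^o\|_{**} \leq C\e\sqrt{|\log\e|}$; the odd part of $N(iV_d\psi_\e)/(iV_d)$ is quadratic in $\psi_\e$ and hence has $\|\cdot\|_{**}$ norm of order $1/|\log\e|^2 \ll \e\sqrt{|\log\e|}$, so it is absorbed into $\tilde h_\e^o$ without destroying the bound. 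Estimates \eqref{est:prop5.3-1}--\eqref{est:prop5.3-2} then give $|\hat\psi_\e^o|_\sharp \leq C\e/\sqrt{|\log\e|}$ and $\|\tilde\psi_\e^o\|_* \leq C\e\sqrt{|\log\e|}$, and a direct comparison of the weights in \eqref{normSharp1}--\eqref{normSharp2} against those of $\|\cdot\|_*$ shows $|\tilde\psi_\e^o|_\sharp \leq C\|\tilde\psi_\e^o\|_*$, yielding the claim $|\psi_\e^o|_\sharp \leq C\e\sqrt{|\log\e|}$.
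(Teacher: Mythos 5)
Your overall architecture (Banach fixed point in the ball $\{\|\psi\|_*\leq K/|\log\e|\}$ using Proposition~\ref{prop:linearfull}, the quadratic estimate on $N$, and then Proposition~\ref{prop:sharp2b} plus Lemma~\ref{lemma:decomposition} for the odd part) is the same as the paper's, which defers the details to \cite[Proposition 6.1]{DdPMR}. The existence, uniqueness, continuity in $\hat d_\e$, and the bound $\|\psi_\e\|_*\leq C/|\log\e|$ are fine as you present them.

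However, there is a genuine gap in your derivation of \eqref{estimatesPhi}, and it sits exactly at the point the introduction flags as the main difficulty of the paper. You claim that the odd part of $N(iV_d\psi_\e)/(iV_d)$ has $\|\cdot\|_{**}$ norm of order $1/|\log\e|^2\ll \e\sqrt{|\log\e|}$ and can therefore be absorbed into $\tilde h^o_\e$. The comparison is backwards: since $\e\sqrt{|\log\e|}$ vanishes essentially like $\e$ while $|\log\e|^{-2}$ vanishes only logarithmically, one has $\e\sqrt{|\log\e|}\ll |\log\e|^{-2}$, not the reverse. Feeding the crude quadratic bound $\|(N/iV_d)^o\|_{**}\leq C\|\psi_\e\|_*^2\leq C|\log\e|^{-2}$ into \eqref{est:prop5.3-2} therefore only yields $\|\tilde\psi^o_\e\|_*\leq C|\log\e|^{-2}$, which is far too large to conclude $|\psi^o_\e|_\sharp\leq C\e\sqrt{|\log\e|}$. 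To close the argument one must exploit the parity structure of the quadratic nonlinearity: products of two even modes and products of two odd modes are even, so the odd projection of $N$ consists (up to localization errors from the cut-offs $\eta_{j,R_\e}$) of cross terms $\psi^e\cdot\psi^o$, of size $O(|\log\e|^{-1}\cdot\e\sqrt{|\log\e|})=O(\e/\sqrt{|\log\e|})$ --- this is precisely the computation the paper later uses in Section~\ref{VII} when it bounds $|(\mathcal{N}(\psi_\e))^o_2|\leq C\|\psi^e_\e\|_*|\psi^o_\e|_\sharp+|\psi^o_\e|_\sharp^2$. But this estimate presupposes the very bound $|\psi^o_\e|_\sharp\leq C\e\sqrt{|\log\e|}$ you are trying to prove, so it cannot be obtained by a single a~posteriori application of Proposition~\ref{prop:sharp2b} after the fixed point has been run in the $\|\cdot\|_*$ ball alone. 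The fixed point must instead be set up in a set encoding \emph{both} constraints, $\|\psi\|_*\leq K/|\log\e|$ and the $\sharp$-bound on the (decomposed) odd part, with invariance of the second constraint verified via the even/odd splitting of $N$; this is the content of \cite[Proposition 6.1]{DdPMR} that your proposal skips.
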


The existence of solution is obtained by combining the linear theory with a fixed point argument performed in a precise set determined by the size of the error term $R$ and the a priori estimates on the symmetric and non symmetric part of the solution. Notice that the non linear term $\mathcal{N}(\psi)$ is exactly the same as in the case of the Ginzburg-Landau equation in \cite{DdPMR}. Thus, by applying Proposition \ref{prop:linearfull}, Proposition \ref{prop:sharp2b} and Lemma \ref{lemma:decomposition} the result follows exactly as in \cite[Proposition 6.1]{DdPMR} so we omit the proof.

\section{Solving the reduced problem: proofs of theorem \ref{th:main1} and theorem \ref{th:main2}}\label{VII}

 The function $\psi_\e$, with  \(\phi_\e=iV_d\psi_\e\) the solution of \eqref{eq:nonlinear} found in Proposition \ref{prop:nonlinear}, depends continuously on $\hat{d}_\e:=\e\sqrt{|\log \e|}d_\e$. We want to find $\d_\e$ such that the Lyapunov-Schmidt coefficient in \eqref{eq:nonlinear} satisfies $c_1=c_1(\d_\e)=0$. 

By symmetry we work only in the sector
$$\Theta_1:=\left\{z\in \mathbb{C}:\, z=re^{is},\; r>0,\; s\in \left[-\frac{\pi}{n^+},\frac{\pi}{n^+}\right]\right\}.$$
In the previous section we have found \(\psi_\e\) such that
\begin{equation}\label{eq:reduction_theta_1}
\begin{split}
\left[\mathcal{L}^\e(\phi_\e)+E-N(\phi_\e) \right](z+\xi_1^+) = c_1iV_d(z+\xi_1^+)\chi(z) \frac{ w_{x_1}(z)}{iw(z)} \text{ in } \Theta_1.
\end{split}\end{equation}
We recall that \(R_\e\) is defined in \eqref{Reps} and thus satisfies that  \(R_\e=o_\e\left ((\e\sqrt{ |\log \e|})^{-1}\right )\) but \(|\log R_\e| \sim |\log \e|\), and we set
\begin{equation*}
c_*:= \RE \int_{B(0,R_\e)} \chi |w_{x_1}|^2=\RE \int_{B(0,4)}\chi |w_{x_1}|^2,
\end{equation*}
and we remark that, thanks to the decay of \(w_{x_1}\), this quantity is of order $1$.
We multiply the equation \eqref{eq:reduction_theta_1} by \(\frac{\overline{V_d}(z+\xi_1^+)}{\overline{w}(z)} \overline{w}_{x_1}(z)\) and we observe that
\begin{equation}
\left|\frac{V_d(z+\xi_1^+)}{w(z)} \right|^2=1+O(\e^2) \quad \text{ in } \Theta_1.
\end{equation} We find that
\begin{align*}
c_1 c_*=&-\RE \int_{B(0,R_\e)} \frac{\overline{V_d}(z+\xi_1^+)}{\overline{w}(z)} E(z+\xi_1^+)\overline{w}_{x_1}(z)  + \RE \int_{B(0,R_\e)}\frac{\overline{V_d}(z+\xi_1^+)}{\overline{w}(z)}\L^\e(\phi_\e)(z+\xi_1^+) \overline{w}_{x_1}(z) \\ 
&-\RE \int_{B(0,R_\e)} \frac{\overline{V_d}(z+\xi_1^+)}{\overline{w}(z)} N(\phi_\e)(z+\xi_1^+)\overline{w}_{x_1}(z) +O(\e^2).
\end{align*}
We observe that 
\begin{align*}
\frac{\overline{V_d}(z+\xi_1^+)}{\overline{w}(z)}\L^\e(\phi_\e)(z+\xi_1^+)& = \frac{\overline{V_d}(z+\xi_1^+)}{\overline{w}(z)}\left(iV_dL'(\psi_\e)+i\eta E\psi_\e \right)(z+\xi_1^+)  \\
&= \frac{|V_d(z+\xi_1^+)|^2}{|w(z)|^2}iw(z) L'(\psi_\e)(z+\xi_1^+)+\frac{\overline{V_d}(z+\xi_1^+)}{\overline{w}(z)}i(\eta E\psi_\e)(z+\xi_1^+) \\
&= L_1^\e(\phi_1)+\frac{\overline{V_d}(z+\xi_1^+)}{\overline{w}(z)}i(\eta E\psi_\e)(z+\xi_1^+)+O(\e^2),
\end{align*}
 with \(L_1^\e\) defined in \eqref{defL_j} and \(\phi_1\) defined in \eqref{defphi_j}. Integrating by parts we find
\begin{equation*}\begin{split}
\RE \int_{B(0,R_\e)} &L_1^\e(\phi_1)  \overline{w}_{x_1} =\RE \int_{B(0,R_\e)} (L_1^\e-L_0)(\phi_1)  \overline{w}_{x_1} +\RE \int_{B(0,R_\e)} L_0(\phi_1)  \overline{w}_{x_1} \\
&=
\RE \int_{B(0,R_\e)} (L_1^\e-L_0)(\phi_1)  \overline{w}_{x_1} +\RE \int_{\p B(0,R_\e)} \left(\frac{\p \phi_1}{\p \nu}\overline{w}_{x_1}-\phi_1\frac{\p \overline{w}_{x_1}}{\p\nu}\right) + O(\e^2|\log\e|),
\end{split}\end{equation*}
with $L_0$ given in \eqref{L0}. Using \eqref{eq:defL_j} and \eqref{eq:estimatesonalpha_j} we can estimate 
\begin{equation*}
\left|\RE \int_{B(0,R_\e)} (L_1^\e-L_0)(\phi_1)  \overline{w}_{x_1} \right| \leq C \e \sqrt{|\log \e|}\|\psi\|_* \leq \frac{C \e}{\sqrt{|\log \e|}},
\end{equation*}
and, by Lemma \ref{lem:propertiesofrho},
\begin{equation*}
\left|\RE \int_{\p B(0,R_\e)} \left(\frac{\p \phi_1}{\p \nu}\overline{w}_{x_1}-\phi_1\frac{\p \overline{w}_{x_1}}{\p\nu}\right)\right|\leq \frac{C \e}{\sqrt{|\log \e|}}.
\end{equation*}
Therefore,
\begin{equation}\label{projL}
\left|\RE \int_{B(0,R_\e)} L_1^\e(\phi_1)  \overline{w}_{x_1}\right| \leq \frac{C \e}{\sqrt{|\log \e|}}, 
\end{equation}
and we also have
\begin{align*}
\RE \int_{B(0,R_\e)} \frac{\overline{V_d}(z+\xi_1^+)}{\overline{w}(z)}\L^\e(\phi_\e)(z+\xi_1^+)\overline{w}_{x_1}= O \left(\frac{\e}{\sqrt{|\log \e|}}\right)
\end{align*}
since \(|E \psi_\e|\leq C\e/\sqrt{|\log \e|}\).
Now we estimate the inner product of \(w_{x_1}\) and the non linear term. We use Lemma \ref{lem:formulation} to write
\begin{equation*}\begin{split}
-\int_{B(0,R_\e)} &\frac{\overline{V_d}(z+\xi_1^+)}{\overline{w}(z)}N(\phi_\e)(z+\xi_1^+) \overline{w}_{x_1}(z)=\int_{B(0,2)\setminus B(0,1)} \frac{\overline{V_d}(z+\xi_1^+)}{\overline{w}(z)}M(\phi_\e)(z+\xi_1^+) \overline{w}_{x_1}(z)\\
&+\int_{B(0,R_\e)\setminus B(0,1)} iw(z) \frac{|V_d(x+\xi_1^+)|^2}{|w(z)|^2}(1-\eta)\mathcal{N}(\psi_\e)(z+\xi_1^+)\overline{w}_{x_1}(z)
\end{split}\end{equation*}
where
\begin{align*}
\mathcal{N}(\psi)= i(\nabla \psi)^2+\e^2(\p_s\psi)^2+i (e^{-2\IM(\psi)}-1+2\IM(\psi)).
\end{align*}

\noindent We use the orthogonality of the Fourier modes to write
\begin{equation*}\begin{split}
\RE \int_{B(0,R_\e)\setminus B(0,1)} iw &\mathcal{N}(\psi_\e)(z+\xi_1^+) \overline{w}_{x_1}  = \RE \int_{B(0,R_\e)\setminus B(0,1)}iw \overline{w}_{x_1} \left(\mathcal{N}(\psi_\e) \right)^o \nonumber \\
&= \RE \int_{B(0,R_\e)\setminus B(0,1)} i\rho \left( \rho' \cos s-\frac{i \rho}{r}\sin s\right)\left[ (\mathcal{N}(\psi_\e))^o_1+i(\mathcal{N}(\psi_\e))_2^o \right] \nonumber \\
&=- \int_{B(0,R_\e)\setminus B(0,1)} \left( \rho \rho' \cos s (\mathcal{N}(\psi_\e))^o_2-\frac{\rho^2}{r}(\mathcal{N}(\psi_\e))^o_1 \sin s \right).
\end{split}\end{equation*}
Using that
\begin{align*}
|\left(\mathcal{N}(\psi_\e)\right)^o_2| &\leq |\left(\mathcal{N}(\psi_\e)\right)^o_2|_{\sharp \sharp} \leq C \| \psi_\e^e\|_{*}|\psi_\e^o|_{\sharp}+|\psi_\e^o|_\sharp^2 \leq C\e |\log \e|^{-1/2}, \\
|\left(\mathcal{N}(\psi_\e)\right)^o_1| & \leq C \left( \frac{|(\psi_\e)_2^o|_\sharp \|(\psi_\e)_1^e\|_*}{1+r^2} +\frac{|(\psi_\e)_1^o|_\sharp \|(\psi_\e)_2\|_*}{1+r^{2-\sigma}}+\frac{|(\psi_\e)_1^o|_\sharp |(\psi_\e)_2^o|_\sharp}{1+r^{2-\sigma}} \right)\leq C \frac{\e |\log\e|^{-1/2}}{1+r^{2-\sigma}},
\end{align*}
we obtain
\begin{equation}\label{projN}
\left|\RE \int_{B(0,R_\e)\setminus B(0,1)} iw(z) \mathcal{N}(\psi_\e)(z+\xi_1^+) \overline{w}_{x_1} \right| \leq \frac{C\e}{\sqrt{|\log \e|}}.
\end{equation}
By using that \(M(\phi_\e)\) is at least quadratic and is a sum of analytic terms (multiplied by cut-off functions) in \(\phi_\e\) and \(\nabla \phi_\e\) we can use a parity argument analogous to the previous one to conclude that
\begin{equation*}
M(\phi_\e)^o =M(\phi_\e^o)+O\left(\|\psi_\e^e\|_*|\psi_\e^o|_\sharp\right)
\end{equation*}
and thus
\begin{equation*}
\left|\RE\int_{B(0,R_\e)} \frac{\overline{V_d}(z+\xi_1^+)}{\overline{w}(z)}N(\phi_\e)(z+\xi_1^+) \overline{w}_{x_1}(z) \right| \leq C \|\psi_\e^e\|_*|\psi_\e^o|_{\sharp} \leq  \frac{C\e}{\sqrt{|\log \e|}}.
\end{equation*}

It remains to estimate the term relative to the error. In order to do that we write \(E=iV_dR\) thus 
\begin{align*}
\int_{B(0,R_\e)}\frac{\overline{V_d}(z+\xi_1^+)}{\overline{w}(z)} E(z+\xi_1^+)\overline{w}_{x_1}(z) &= \int_{B(0,R_\e)} iw(z) R(z+\xi_1^+)\overline{w} _{x_1}(z) (1+O(\e^2)).
\end{align*}
We set
\begin{eqnarray}\label{defB}
B_a:= \RE \int_{B(0,R_\e)} iw(z) R_a(z+\xi_1^+)\overline{w}_{x_1}, \quad B_b:= \RE \int_{B(0,R_\e)} iw(z) R_b(z+\xi_1^+) \overline{w}_{x_1},
\end{eqnarray}
\begin{equation}\label{defB2}
B_c:=\RE \int_{B(0,R_\e)} iw(z) R_c(z+\xi_1^+)\overline{w}_{x_1},
\end{equation}
where we recall that $S_a(V_d)=iV_d R_a$, $S_b(V_d)=iV_dR_b$, $S_c(V_d)=iV_dR_c$ were given by \eqref{S}. 

{\it Proof of Theorem \ref{th:main1}.}  Assume $n^+=n\geq 2$ and $n^-=0$ in \eqref{eq:approx}. From Lemma \ref{decError} we find that
\begin{equation*}\begin{split}
B_b 
&=\frac{\d_\e \e}{\sqrt{|\log \e|}}\RE \int_{B(0,R_\e)}|w_{x_1}|^2 +O\left(\frac{\e}{{\sqrt{|\log \e|}}}\right)
\end{split}\end{equation*}
where we  used that \(\varphi_1=0\). We set
\begin{equation*}
a_1:=\frac{1}{|\log \e|}\int_0^{2\pi}\int_0^{R_\e} \frac{\r^2\sin^2s}{r} \dif r\dif s,
\end{equation*}
and we recall that \(|\log R_\e|\)  is of same order as \(|\log \e|\) and  does not depend on \(\d_\e\). Thus, using the fact that $\lim_{r\rightarrow +\infty}\r(r)=1$ we can see that 
\begin{equation}
a_1=\pi+o_\e(1).
\end{equation} 
Therefore we conclude that
\begin{equation}
B_b= \d_\e\pi \e \sqrt{|\log \e|} +o_\e(\e \sqrt{| \log \e|} ).
\end{equation}
On the other hand, from  \eqref{eq:expression_error} we deduce that in $B(0,R_\e)$ there holds
\begin{equation*}\begin{split}
&S_a(V_d)\overline{w}_{x_1}  = V_d\left\{ \left(1-\big|\prod_{j=1}^n w^j \big|^2 \right)-\sum_{j=1}^n \left(1-|w^j|^2 \right)+\sum_{j=1}^n \sum_{l\neq j} \frac{\nabla w^l}{w^l}\frac{\nabla w^j}{w^j} \right\}\overline{w}^1_{x_1} \nonumber \\
& =2V_d\sum_{j\neq1} \frac{\nabla w^1}{w^1}\frac{\nabla w^j}{w^j} \overline{w}^1_{x_1}+O(\e^2|\log\e|)  \nonumber \\
& = 2V_d\sum_{j\neq 1} \left[\left( \frac{\rho'_1 \rho'_j}{\rho_1 \rho_j}-\frac{1}{r_1 r_j} \right) \cos(\theta_1-\theta_j)+i\left(\frac{\rho_1'}{\rho_1r_j}-\frac{\rho_j'}{\rho_j r_1} \right) \sin(\theta_1-\theta_j) \right] \left( \rho_1'\cos \theta_1+i\frac{\rho_1\sin \theta_1}{r_1} \right) e^{-i\theta_1}\\
&\quad +O(\e^2 |\log \e|). \nonumber
\end{split}\end{equation*}
Thus we find
\begin{equation*}\begin{split}
B_a &= -2\sum_{j\neq1} \int_{\{r_1<R_\e\}}\frac{\rho_1'\rho_1}{r_1 r_j}\left( \cos(\theta_1-\theta_j)\cos \theta_1+\sin(\theta_1-\theta_j)\sin \theta_1 \right) +O(\e^2 |\log \e|)\nonumber \\
&= -2\sum_{j\neq1}  \int_{\{r_1<R_\e\}}\frac{\rho_1' \rho_1 \cos\theta_j}{r_1r_j} +O(\e^2 |\log \e|). \nonumber
\end{split}\end{equation*}
To compute the last term we observe that, since \( R_\e=o_\e\left( \frac{1}{\e \sqrt{|\log \e|}} \right)\), inside the ball \( \{r_1<R_\e\}\) we have
\begin{equation*}\begin{split}
 \frac{\cos \theta_j}{r_j}&=\frac{|\RE(\xi_j^+-\xi_1^+)|}{|\xi_j^+-\xi_1^+|^2}(1+o_\e(1)) \quad \text{ for every } j \neq 1,\\
 &=\frac{d_\e\left(1-\cos\left(2\pi(j-1)/n\right)\right)}{2d_\e^2\left(1-\cos\left(2\pi(j-1)/n\right)\right)}(1+o_\e(1)) \\
 &=\frac{1}{2d_\e}(1+o_\e(1)) ,
\end{split}\end{equation*}
where we have used that \(\xi_j^+=d_\e e^{2i(j-1)\pi/n}\). Hence,
\begin{align*}
\sum_{j=2}^n \int_{r_1<R_\e} \frac{\cos \theta_j}{r_j} \rho'_1\rho_1 dr_1d \theta_1 &=\frac{2\pi}{2d_\e} (n-1)(1+o_\e(1))\int_0^{R_\e} \rho_1'\rho_1 \dif r_1 .
\end{align*}
Noticing that \( \int_0^{R_\e} \rho_1'\rho_1 \dif r_1 =\frac12 \left( \rho_1^2(R_\e)-\rho_1(0) \right)=\frac12 +o_\e(1)\) we conclude
\begin{align*}
\sum_{j=2}^n \int_{r_1<R_\e} \frac{\cos \theta_j}{r_j} \rho'_1\rho_1 \dif r_1 \dif  \theta_1 
&= \frac{\pi}{d_\e}\frac{n-1}{2}(1+o_\e(1)),
\end{align*}
and thus
\begin{equation}
B_a=-\frac{n-1}{\d_\e}\e\sqrt{|\log \e|}\pi+o_\e(\e\sqrt{|\log \e|}).
\end{equation}
For the last term in the error we have
\begin{equation*}\begin{split}
&R_c(z)\overline{w}_{x_1}(z-\xi_1^+)\\
&\quad =c\hat{d}_\e\e\sqrt{|\log\e|}\sum_{j=1}^n \left(  \sin(\theta_j-\varphi_j)\frac{\rho'_j}{\rho_j}+i\frac{\cos(\theta_j-\varphi_j)}{r_j}\right)\left(\rho'_1\cos\theta_1+i\frac{\rho_1 \sin\theta_1}{r_1}\right)e^{-i\theta_1}.
\end{split}\end{equation*}
Thus, by using that \(\varphi_1=0\), we find
\begin{equation*}\begin{split}
B_c&=-c\hat{d}_\e\e\sqrt{|\log\e|}\int_{|\rho_1|< R_\e}\frac{\rho'_1\rho_1}{r_1}\left(\sin^2\theta_1+\cos^2\theta_1 \right)+O\left( \e^2|\log \e|^{3/2} \right) \nonumber \\
&=-c\hat{d_\e}\e\sqrt{|\log\e|}2\pi\int_0^{R_\e}\rho'_1(r_1) \rho_1(r_1)\,\dif  r_1+O\left(\e^2|\log \e|^{3/2} \right). \nonumber 
\end{split}\end{equation*}
Therefore, since 
$$\int_0^{R_\e}\rho'_1(r_1) \rho_1(r_1)\, \dif r_1=\frac12 (\rho^2(R_\e)-\rho^2(0))=\frac 12+O(\e^2|\log\e|),$$
we find
\begin{equation}
B_c=-c\hat{d_\e}\e\sqrt{|\log\e|}\pi +o_\e(\e\sqrt{|\log\e|}).
\end{equation}

Hence, we conclude that
\begin{equation*}
 c_1 c_*=\e \sqrt{|\log \e|}\left( -\frac{a_0}{\d_\e}+\tilde{a}_1\d_\e\right)+o_\e(\e \sqrt{|\log \e|}),
\end{equation*}
with
\begin{equation}\label{a1}
a_0:=(n-1)\pi,\quad  \tilde{a}_1:= (1-c)\pi,
\end{equation}
which is positive since we assumed $c<1$.
Let us point out that in this expression $o_\e(\e \sqrt{|\log \e|})$ is a continuous function of the parameter $\d$. 

By applying the intermediate value theorem we can find $\d_0$ near $\sqrt{\frac{a_0}{\tilde{a}_1}}=\sqrt{\frac{n-1}{1-c}}$ such that 
$$c_1=c_1(\d_0)=0,$$
and therefore, for such $\d_0$ we conclude that $V_d+\varphi_\e$ is a solution of \eqref{eq:GP2Drescaled}.

\medskip

{\it Proof of Theorem \ref{th:main2}.} Assume $n^+=n+1$ and $n^-=1$ in \eqref{eq:approx}, with $\xi_1^-=0$. The result follows analogously to the case of Theorem \ref{th:main1}. Indeed, estimates \eqref{projL} and \eqref{projN} hold straightforward, so we only have to estimate the projection of the error term. Let us define $B_a$, $B_b$ and $B_c$ as in \eqref{defB}. \eqref{defB2}. Since $|\xi_1^-|=0$ the terms $B_b$ and $B_c$ are estimated exactly as in the proof of Theorem \ref{th:main1}, to get
$$B_b=\d_\e\pi \e \sqrt{|\log \e|} +o_\e(\e \sqrt{| \log \e|} ),\qquad B_c=-c\hat{d}\e\sqrt{|\log\e|}\pi +o_\e(\e\sqrt{|\log\e|}).$$ 
 To estimate $B_a$ we see that in this case
\begin{equation*}\begin{split}
&S_a(V_d)\overline{w}_{x_1}   \\
& = 2V_d\sum_{j\neq 1} \left[\left( \frac{\rho'_1 \rho'_j}{\rho_1 \rho_j}-\frac{1}{r_1 r_j} \right) \cos(\theta_1-\theta_j)+i\left(\frac{\rho_1'}{\rho_1r_j}-\frac{\rho_j'}{\rho_j r_1} \right) \sin(\theta_1-\theta_j) \right] \left( \rho_1'\cos \theta_1+i\frac{\rho_1\sin \theta_1}{r_1} \right)e^{-i\theta_1} \\
&\qquad +2V_d\left[\left( \frac{\rho'_1 \rho'}{\rho_1 \rho}+\frac{1}{r_1 r} \right) \cos(\theta_1-\theta)+i\left(\frac{\rho_1'}{\rho_1 r}+\frac{\rho'}{\rho r_1} \right) \sin(\theta-\theta_1) \right] \left( \rho_1'\cos \theta_1+i\frac{\rho_1\sin \theta_1}{r_1} \right)e^{-i\theta_1} \\
&\qquad +O\left(\e^2 |\log \e|\right),
\end{split}\end{equation*}
and thus
\begin{equation*}\begin{split}
B_a &= -2\sum_{j\neq1} \int_{\{r_1<R_\e\}}\frac{\rho_1'\rho_1}{r_1 r_j}\left( \cos(\theta_1-\theta_j)\cos \theta_1+\sin(\theta_1-\theta_j)\sin \theta_1 \right) \\
&\quad +2 \int_{\{r_1<R_\e\}}\frac{\rho_1'\rho_1}{r_1 r}\left( \cos(\theta_1-\theta)\cos \theta_1+\sin(\theta_1-\theta)\sin \theta_1 \right) +O(\e^2 |\log \e|)\nonumber \\
&= -2\sum_{j\neq1}  \int_{\{r_1<R_\e\}}\frac{\rho_1' \rho_1 \cos\theta_j}{r_1r_j} +2\int_{\{r_1<R_\e\}}\frac{\rho_1' \rho_1 \cos\theta}{r_1r} +O(\e^2 |\log \e|), \nonumber
\end{split}\end{equation*}
what implies
\begin{equation*}\begin{split}
B_a &= -2\sum_{j\neq 1} \int_{\{r_1<R_\e\}}  \frac{\rho_1' \rho_1 \cos\theta_j}{r_j} \dif r_1  \dif \theta_1 +2\int_{\{r_1<R_\e\}}  \frac{\rho_1' \rho_1 \cos\theta}{r} \dif  r_1 \dif \theta_1 +O(\e^2 |\log \e|)\nonumber \\
&= -\e \sqrt{|\log \e|}\frac{\tilde{a}_0}{\hat{d}_\e}+o_\e(\e \sqrt{|\log \e|}),
\end{split}\end{equation*}
with \begin{equation}
\tilde{a}_0:=n_+-3.
\end{equation} 
Therefore, 
\begin{equation*}
 c_1 c_*=\e \sqrt{|\log \e|}\left( -\frac{\tilde{a}_0}{\d_\e}+a_1\d_\e\right)+o_\e(\e \sqrt{|\log \e|}),
\end{equation*}
with $a_1$ defined in \eqref{a1}, and the result follows as in the previous case. More precisely, thanks to the intermediate value theorem we find \(\d_\e\) near \( \sqrt{\frac{\tilde{a}_0}{a_1}}=\sqrt{\frac{n_+-3}{1-c}}\) such that \(c_1=0\).

\section*{Acknowledgements}
J.~D\'avila has been supported  by  a Royal Society  Wolfson Fellowship, UK.
 M.~del Pino has been supported by a Royal Society Research Professorship, UK. M.~Medina  has been partially supported by Project PDI2019-110712GB-100, MICINN, Spain. R.~Rodiac has been partially supported by the ANR project  BLADE Jr. ANR-18-CE40-0023.
\section*{Appendix}
\subsection{The standard vortex and its linearized operator}
The building block used to construct our solutions to equation \eqref{eq:GP2Drescaled} is the standard vortex of degree one in $\R^2$, that we denote $w$. It satisfies
$$\Delta w+(1-|w|^2)w=0\qquad \mbox{ in }\R^2,$$
and can be written as
$$w(x_1,x_2)= \rho(r)e^{i\v}\mbox{ where }x_1=r \cos \v,\; x_2=r \sin \v.$$
Here $\rho$ is the unique solution of 
\begin{equation}\label{eq:equation_modulus_w}  \left\{  \begin{aligned}
&\rho'' +  \frac {\rho'}r  - \frac \rho{r^2} +  (1-\rho^2) \rho  =  0 \inn (0,\infty),\\
& \rho(0^+) = 0 , \quad \rho(+\infty) = 1 ,
\end{aligned} \right. 
\end{equation} see \cite{ChenElliottQi1994,HerveHerve1994}. 
In this section we collect useful properties of $\r$.
\begin{lemma}\label{lem:propertiesofrho}
Let $\rho$ be the unique solution of \eqref{eq:equation_modulus_w}. Then:
\begin{itemize}
\item[1)] $\r(0)=0$, $\r'(0)>0$, $0<\r(r)<1$ and $\r'(r)>0$ for all $r>0$,
\item[2)] $\r(r)=1-\frac{1}{2r^2}+O(\frac{1}{r^4})$ for large $r$,
\item[3)] $\r(r)=\alpha r-\frac{\alpha r^3}{8}+O(r^5)$ for $r$ close to $0$ for some $\alpha>0$,
\item[4)] if we define $T(r):=\r'(r)-\frac{\r}{r}$ then $T(0)=0$ and $T(r)<0$ in $(0,+\infty)$,
\item[5)] $\r'(r)=\frac{1}{r^3}+O(\frac{1}{r^4})$, $\r''(r)=O(\frac{1}{r^4})$ for large $r$.
\end{itemize}
\end{lemma}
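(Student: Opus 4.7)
The plan is to handle the five items using a combination of classical results on the standard vortex and direct ODE analysis. Items (1) and the basic existence/uniqueness are entirely classical; I would cite \cite{HerveHerve1994,ChenElliottQi1994} for the existence of a unique solution $\rho$ to \eqref{eq:equation_modulus_w}, the bounds $0<\rho<1$, and the strict monotonicity $\rho'>0$ on $(0,\infty)$, which follow from a shooting argument combined with a maximum-principle comparison (if $\rho$ overshoots $1$, an easy energy argument shows it cannot stay bounded; if $\rho'$ vanishes at an interior point, the equation forces a contradiction).

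For item (3), I would substitute a power series ansatz $\rho(r)=\alpha r+\beta r^{3}+\gamma r^{5}+\cdots$ directly into \eqref{eq:equation_modulus_w}, using that the local solution near $0$ is analytic with $\rho(0)=0$, and that only odd powers of $r$ appear by standard uniqueness arguments. A direct calculation gives
\[
\rho''+\frac{\rho'}{r}-\frac{\rho}{r^{2}}=8\beta r+24\gamma r^{3}+O(r^{5}),\qquad
(1-\rho^{2})\rho=\alpha r+(\beta-\alpha^{3})r^{3}+O(r^{5}),
\]
and matching the coefficient of $r$ yields $\beta=-\alpha/8$, proving (3) with $\alpha=\rho'(0)>0$.

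For item (2) (and hence (5)), I would perform the substitution $\rho=1-\sigma$ with $\sigma\to 0$. The equation becomes
\[
-\sigma''-\frac{\sigma'}{r}-\frac{1-\sigma}{r^{2}}+2\sigma-3\sigma^{2}+\sigma^{3}=0,
\]
so balancing the leading $r^{-2}$ term with $2\sigma$ suggests $\sigma\sim 1/(2r^{2})$. I would justify this rigorously via barrier functions of the form $1-\frac{1}{2r^{2}}\pm\frac{C}{r^{4}}$ and the maximum principle (as in \cite{HerveHerve1994}), which gives the expansion $\rho=1-\tfrac{1}{2r^{2}}+O(r^{-4})$. The derivative bounds in (5) then come from differentiating the expansion together with Schauder/ODE bootstrap applied to \eqref{eq:equation_modulus_w}, using that $\rho''$ can be expressed algebraically in terms of $\rho, \rho'$ via the equation itself.

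For item (4), the value $T(0)=0$ is immediate from (3), since both $\rho'(0)$ and $\lim_{r\to 0^{+}}\rho/r$ equal $\alpha$. For the sign of $T$ on $(0,\infty)$, the key observation is that $T$ solves a convenient first-order inequality: using the ODE one computes
\[
T'(r)=\rho''(r)-\frac{\rho'(r)}{r}+\frac{\rho(r)}{r^{2}}=-\frac{2}{r}\Bigl(\rho'-\frac{\rho}{r}\Bigr)-(1-\rho^{2})\rho=-\frac{2T}{r}-(1-\rho^{2})\rho.
\]
Hence $(r^{2}T)'=-r^{2}(1-\rho^{2})\rho<0$ on $(0,\infty)$ by (1). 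Since $r^{2}T\to 0$ as $r\to 0^{+}$, integrating gives $r^{2}T(r)<0$, that is $T(r)<0$, for every $r>0$.

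The only genuinely delicate step is the rigorous asymptotic expansion at infinity in item (2), where one must upgrade the formal balance $\sigma\sim 1/(2r^{2})$ to a true $O(r^{-4})$ remainder. I would handle this with the standard barrier method on the quantity $\sigma$, which is routine but somewhat technical; all other items reduce to short computations or to quoting \cite{HerveHerve1994,ChenElliottQi1994}.
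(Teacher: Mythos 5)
The paper offers no proof of this lemma and simply refers to \cite{HerveHerve1994,ChenElliottQi1994}; your sketch reproduces the classical arguments from those references and is correct. In particular the identity $(r^2T)'=-r^2(1-\rho^2)\rho$ underlying item (4) checks out (together with $r^2T\to 0$ at the origin it gives $T<0$ on $(0,\infty)$), the coefficient matching $8\beta+\alpha=0$ for item (3) and the balance $2\sigma\sim r^{-2}$ for item (2) are accurate, and the only point genuinely requiring the extra care you already flag is that the derivative asymptotics in (5) cannot be obtained by formally differentiating the $O(r^{-4})$ remainder but must come from the equation itself (or from the refined expansions in the cited works).
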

For the proof of this lemma we refer to \cite{HerveHerve1994, ChenElliottQi1994}.

An object of special importance to construct our solution is the linearized Ginzburg-Landau operator around $w$, defined by
\begin{equation}\nonumber
L(\phi):= \Delta \phi+(1-|w|^2)\phi-2\RE(\overline{w}\phi)w.
\end{equation}
This operator does have a kernel, as the following result states.

\begin{lemma}\label{lem:ellipticestimatesL0}
Suppose that $\phi \in L^\infty(\R^2)$ satisfies $L(\phi)=0$ in $\R^2$ and the symmetry  \(\phi(\overline{z})=\overline{\phi}(z)\).
Assume furthermore that when we write $\phi=iw\psi$ and $\psi=\psi_1+i\psi_2$ with $\psi_1,\psi_2\in \R$ we have
\begin{eqnarray*}
|\psi_1|+(1+|z|)|\nabla \psi_1| \leq  C, \qquad |\psi_2|+|\nabla \psi_2| \leq  \frac{C}{1+|z|} ,\quad |z|>1.
\end{eqnarray*}
Then
\begin{equation*}
\phi=c_1 w_{x_1}
\end{equation*}
for some real constant \(c_1\).
\end{lemma}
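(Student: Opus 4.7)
My plan is to combine the classical non-degeneracy result for the Ginzburg-Landau vortex with the specific symmetry hypothesis. First I would set $\phi = iw\psi$ with $\psi = \psi_1 + i\psi_2$. A direct computation, using $\Delta w + (1-|w|^2)w = 0$, reduces $L(\phi)=0$ to
\begin{equation*}
\Delta\psi + 2\frac{\nabla w}{w}\cdot\nabla\psi - 2i|w|^2\psi_2 = 0 \inn \R^2\setminus\{0\}.
\end{equation*}
Combined with the stated decay of $(\psi_1,\psi_2)$ and the $L^\infty$ bound on $\phi$ near the origin (which controls the allowed singularity of $\psi$ at the vortex), this places $\phi$ in the functional framework where the Mironescu--Del Pino--Felmer--Kowalczyk non-degeneracy theorem (\cite{Mironescu1996,DFK}) applies: the bounded kernel of $L$ is spanned over $\R$ by the three infinitesimal generators of the invariances of the equation, namely
\begin{equation*}
\phi = c_1 w_{x_1} + c_2 w_{x_2} + c_3\, iw, \qquad c_1,c_2,c_3\in\R.
\end{equation*}

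Second I would use the symmetry $\phi(\overline{z})=\overline{\phi(z)}$ to kill two of the three coefficients. Since $w(r,\theta)=\rho(r)e^{i\theta}$ satisfies $w(\overline{z})=\overline{w(z)}$, formulas \eqref{derW} give
\begin{equation*}
w_{x_1}(\overline{z}) = \overline{w_{x_1}(z)}, \quad w_{x_2}(\overline{z}) = -\overline{w_{x_2}(z)}, \quad (iw)(\overline{z}) = -\overline{(iw)(z)}.
\end{equation*}
Imposing $\phi(\overline{z})=\overline{\phi(z)}$ on the decomposition above therefore forces $c_2=c_3=0$, and one concludes $\phi = c_1 w_{x_1}$.

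The main obstacle is justifying the applicability of the Mironescu-type classification under the precise hypotheses given here, since the decay on $\psi_2$ and mere boundedness of $\psi_1$ do not a priori yield a finite-energy perturbation. The cleanest way to resolve this is by a Fourier decomposition in $\theta$: the symmetry forces $\psi_1 = \sum_{n\geq 1} a_n(r)\sin(n\theta)$ and $\psi_2 = \sum_{n\geq 0} b_n(r)\cos(n\theta)$, so that the system decouples into a sequence of second-order ODE systems for $(a_n,b_n)$ with coefficients depending only on $\rho,\rho'$. Invoking Lemma~\ref{lem:propertiesofrho}, a standard analysis of each mode (using reduction of order around the explicit bounded solution $(a_1,b_1)=(-1/r,-\rho'/\rho)$, which corresponds to $w_{x_1}$, together with a Wronskian/barrier argument for the other modes) shows that under the growth bounds imposed on $\psi_1,\psi_2$ only the $n=1$ symmetric mode survives, and it must be proportional to $w_{x_1}$. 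This finishes the proof.
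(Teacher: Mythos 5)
The paper does not actually prove this lemma: it defers entirely to \cite[Lemma 7.1]{DdPMR}, and the argument there is precisely the Fourier--mode/ODE analysis you describe in your last paragraph. So your overall route is the right one. However, your opening step is not a valid shortcut, and you are right to be suspicious of it: the Mironescu/del Pino--Felmer--Kowalczyk non-degeneracy theorems classify the kernel of $L$ in a weighted \emph{energy} space which contains $w_{x_1},w_{x_2}$ but not $iw$ (whose Dirichlet energy diverges logarithmically), and they say nothing about merely bounded solutions; conversely, $iw$ itself shows that the energy-space classification misses bounded kernel elements. There is no off-the-shelf theorem asserting that the bounded kernel is exactly $\mathrm{span}_{\R}\{w_{x_1},w_{x_2},iw\}$ under the hypotheses of this lemma -- establishing that (in the relevant symmetry class) \emph{is} the lemma. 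Your first paragraph therefore does no work, and the entire burden falls on the second. The symmetry bookkeeping you then use is correct ($w_{x_1}(\overline z)=\overline{w_{x_1}(z)}$, $w_{x_2}(\overline z)=-\overline{w_{x_2}(z)}$, $(iw)(\overline z)=-\overline{(iw)(z)}$, the same identities the paper exploits in Section 4).

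In the second paragraph the decisive assertions are stated rather than proved, and they are exactly where the hypotheses enter quantitatively. Concretely: for the mode $n=0$ one must show $b_0\equiv 0$, which follows from the maximum principle via the coercive term $-2\rho^2 b_0$ together with the decay $|\psi_2|\leq C/(1+|z|)$ and boundedness of $\phi$ at the origin; for $n=1$ one must show that the second solution of the coupled system for $(a_1,b_1)$ that is regular at the origin grows linearly at infinity, and the ``reduction of order'' you invoke is for a genuine $2\times 2$ second-order system, not a scalar ODE, so a Wronskian-type identity or variation of parameters has to be carried out explicitly; for $n\geq 2$ one needs barriers built from the indicial behaviour $r^{\pm n}$ and the bounds $|a_n|\leq C$, $|b_n|\leq C/r$, $|\nabla\psi_1|\leq C/(1+|z|)$ to exclude nontrivial solutions. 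None of these steps is automatic, and together they constitute the whole proof. As a plan your argument is correct and coincides with the one in \cite{DdPMR}; as written it is a sketch whose essential analytical content remains to be supplied.
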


\begin{lemma}
\label{lem:ellipticestimatesL0-b}
Suppose that $\phi \in L_{\text{loc}}^\infty(\R^2)$ satisfies $L(\phi)=0$ in $\R^2$ and the symmetry  \(\phi(\overline{z})=\overline{\phi(z)}\).
Assume furthermore that when we write $\phi=iw\psi$ and $\psi=\psi_1+i\psi_2$ with $\psi_1,\psi_2\in \R$ we have
\begin{align*}
|\psi_1|+(1+|z|)|\nabla \psi_1| \leq  C(1+|z|)^\alpha, \qquad |\psi_2|+|\nabla \psi_2| \leq  \frac{C}{1+|z|} ,
\quad |z|>1,
\end{align*}
for some $\alpha<3$.
Then
\begin{equation*}
\phi=c_1 w_{x_1}
\end{equation*}
for some real constant \(c_1\).
\end{lemma}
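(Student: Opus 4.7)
My plan is to reduce the statement to Lemma~\ref{lem:ellipticestimatesL0} by showing that the growth assumption on $\psi_1$, combined with the decay of $\psi_2$ and the equation $L(\phi)=0$, forces $\psi_1$ to be bounded at infinity. The symmetry $\phi(\bar z)=\overline{\phi(z)}$ gives $\psi(\bar z)=-\overline{\psi(z)}$, so Fourier-expanding in the angular variable yields
\[
\psi_1=\sum_{k\geq 1}b_k(r)\sin(k\theta),\qquad \psi_2=c_0(r)+\sum_{k\geq 1}c_k(r)\cos(k\theta),
\]
and substitution into $L(iw\psi)=0$ produces, for each $k\geq 1$, a coupled linear ODE system for the pair $(b_k,c_k)$ together with a scalar equation for $c_0$ which the decay of $\psi_2$ kills.

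I would then analyze each mode separately. For $k\geq 3$, the fundamental solutions of the $b_k$-equation at infinity behave as $r^k$ or $r^{-k}$, and the growth bound $|b_k|\leq Cr^\alpha$ with $\alpha<3\leq k$ rules out the $r^k$ branch. For $k=2$, a growing mode $b_2\sim a r^2$ would make the coupling term $2k b_k/r^2\sim 2a$ in the $c_k$-equation balance $-2c_k$ at infinity, so $c_2\to a$; the hypothesis $|c_2|\leq C/r$ then forces $a=0$. The main obstacle is $k=1$: here $b_1\sim ar$ is accompanied by the induced $c_1\sim a/r$, which is fully compatible with all hypotheses, so the ODE analysis alone cannot rule it out.

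For $k=1$ I would use a Fredholm-type obstruction. A direct computation gives $L(iwx_2)=2iw_{x_2}$, using $\RE(\bar w\cdot iwx_2)=\RE(i|w|^2 x_2)=0$ and $\Delta(iwx_2)=ix_2\Delta w+2iw_{x_2}$. Since $L$ is self-adjoint with respect to the real pairing $\langle\phi,\xi\rangle=\RE\int\phi\,\overline{\xi}\,dx$, Green's identity applied to $\phi$ and $w_{x_1}$ on $B_R$, together with $L\phi=L(w_{x_1})=0$, gives
\[
0=\RE\int_{\partial B_R}\bigl(\partial_r\phi\,\overline{w_{x_1}}-\phi\,\partial_r\overline{w_{x_1}}\bigr)R\,d\theta\quad\text{for every }R.
\]
Using $iwx_2\sim ir\sin\theta\,e^{i\theta}$ and $w_{x_1}\sim -i\sin\theta\,e^{i\theta}/r$ at infinity, a direct angular computation shows that the leading contribution of the $a\,iwx_2$ part tends to $-2\pi a$ as $R\to\infty$, while interior elliptic estimates applied to $\Delta(\psi_1-a x_2)=O(1/r^2)$ yield $|\nabla(\phi-a\,iwx_2)|=O(1/r)$, so the subleading boundary contribution is $O(1/R)$ and vanishes. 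Hence $a=0$.

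Once the polynomial-growth parts of all Fourier modes of $\psi_1$ are shown to vanish, $\psi_1$ is bounded at infinity, and standard elliptic regularity provides $(1+|z|)|\nabla\psi_1|\leq C$; Lemma~\ref{lem:ellipticestimatesL0} then concludes $\phi=c_1 w_{x_1}$. The most delicate step will be the Fredholm argument for $k=1$: it requires the explicit identification of $L(iwx_2)$, the self-adjointness of $L$ in the real pairing, and a careful decay estimate on the subleading part of $\phi$ via mode-by-mode ODE asymptotics combined with interior gradient bounds, so that the boundary integral really reduces to $-2\pi a$.
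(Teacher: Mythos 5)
The paper does not actually prove this lemma: it defers to \cite[Lemma 7.2]{DdPMR}, so there is no in-text argument to compare against. Your strategy is nonetheless the right one and, as far as I can tell, coincides in spirit with the cited proof: Fourier decomposition in $\theta$ using the symmetry $\psi(\overline z)=-\overline{\psi(z)}$; ODE asymptotics to exclude growth in the modes $k\ge 2$ (indicial exponents $\pm k$ together with $\alpha<3$ for $k\ge 3$, and for $k=2$ the slaved limit of $c_2$ at infinity contradicting $|\psi_2|\le C/(1+|z|)$); and a duality obstruction against $w_{x_1}$ for the critical mode $k=1$. Your key computation is correct: $L(iwx_2)=2iw_{x_2}$, $\RE\int_{B_R}2iw_{x_2}\overline{w_{x_1}}\,dx=-2\pi\rho(R)^2\to-2\pi\neq 0$, and the boundary term in Green's identity vanishes identically in $R$ because $L\phi=L(w_{x_1})=0$ and $\phi$ is smooth at the origin; this kills the coefficient of the linearly growing mode-one solution.

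Two steps need more care before this is a complete proof. First, the exclusion of growth is carried out mode by mode, but to invoke Lemma \ref{lem:ellipticestimatesL0} you need a uniform bound $|\psi_1|+(1+|z|)|\nabla\psi_1|\le C$ on the full function; you must therefore either control the constants in the ODE asymptotics uniformly in $k$ and resum the series, or replace the mode-by-mode argument for $k\ge 2$ by a single barrier for $\psi_1$ minus its first two modes (using the angular Poincar\'e inequality). Second, the claimed bound $|\nabla(\phi-a\,iwx_2)|=O(1/r)$ does not follow directly from $\Delta(\psi_1-ax_2)=O(r^{-2})$: a right-hand side of order $r^{-2}$ is borderline for the Laplacian and generically produces logarithmic corrections, so the honest output of interior estimates is of the type $|\psi_1-ax_2|=O(\log^2 r)$ and $|\nabla(\psi_1-ax_2)|=O(r^{-1}\log^2 r)$. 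This still suffices, since the subleading boundary contribution only needs to be $o(1)$ as $R\to\infty$, but the estimate as stated is too strong. Finally, the zero mode of $\psi_2$ deserves an explicit argument (multiply its ODE by $r\rho^2c_0$ and integrate; the boundary terms vanish by decay at infinity and boundedness at the origin) rather than being dismissed in passing.
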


The proofs of these results can be found in \cite[Lemma 7.1 and Lemma 7.2]{DdPMR}.

\bibliographystyle{abbrv}
\bibliography{biblio}

\end{document}